\newtheorem{Thm}{Theorem}[section]
\theoremstyle{definition}
\newtheorem{Theorem}[Thm]{Theorem}
\newtheorem{Lemma}[Thm]{Lemma}
\newtheorem{Corollary}[Thm]{Corollary}
\newtheorem{Definition}[Thm]{Definition}
\newtheorem{Example}[Thm]{Example}
\theoremstyle{remark}
\newtheorem{Remark}{Remark}
\font\sy=cmsy10
\font\ym=msbm10
\newcommand{\R}{\text{\ym R}}
\newcommand{\C}{\text{\ym C}}
\newcommand{\cF}{{\hbox{\sy F}}}
\newcommand{\cI}{{\hbox{\sy I}}}
\newcommand{\cL}{{\hbox{\sy L}}}
\newcommand{\cR}{{\hbox{\sy R}}}
\newcommand{\sB}{\mathscr B}
\newcommand{\sF}{\mathscr F}
\newcommand{\sH}{\mathscr H}
\newcommand{\sI}{\mathscr I}
\newcommand{\sN}{\mathscr N}
\newcommand{\End}{\hbox{\rm End}}
\renewcommand{\Im}{\,\hbox{Im}\,}
\renewcommand{\Re}{\,\hbox{Re}\,}
\title[Trace Formulas]{Around trace formulas\\ in non-commutative integration}
\author{Shigeru Yamagami}
\begin{document}
\maketitle
\begin{center}
Graduate School of Mathematics
\end{center}
\begin{center}
Nagoya University 
\end{center}
\begin{center} 
Nagoya, 464-8602, JAPAN
\end{center}    


\begin{abstract}
Trace formulas are investigated in non-commutative integration theory. 
The main result is to evaluate the standard trace of a Takesaki dual 
and, for this, we introduce the notion of interpolator and accompanied boundary objects. 
The formula is then applied to explore a variation of Haagerup's trace formula. 
\end{abstract} 

\section*{Introduction}
The Haagerup's trace formula in non-commutative integration is a key to his whole theory of 
non-commutative $L^p$-spaces (see \cite{H} and \cite{Terp}). Our purpose here 
is to analyse it from the view point of modular algebras (\cite{AAMT}, \cite{MTB}), 
which was originally formulated in terms of 
Haagerup's $L^p$-theory itself. 
So, to circumvent tautological faults and also to fix notations, 
we first describe modular algebras as well as standard Hilbert 
spaces in terms of basic ingredients of Tomita-Takesaki theory. 

The semifiniteness of Takesaki's duals is then established by constructing relevant Hilbert algebras 
as a collaboration of modular algebras and complex analysis. 
Note that the known proofs of the existence of standard traces are not direct; for example, 
it is usually deduced from the innerness of modular automorphism groups combined with 
a reverse Radon-Nikodym theorem such as Pedersen-Takesaki's or Connes'. 

Since our construction of the Hilbert algebras is based on complex analysis, the associated trace can be 
also described in a calculational way. 
To make the setup transparent, we introduce the notion of interpolators 
together with associated boundary operators and vectors. Viewing things this way, the main trace formula 
turns out to be just a straightforward consequence of definitions. 
The Haagerup's trace formula 
is then derived in a somewhat generalized form as a concrete application of our formula. 

The Haagerup's correspondence between normal functionals and relatively invariant measurable operators 
on Takesaki's duals is also established on our streamlines.  

Recall that the standard approach to these problems is 
by the theory of operator-valued weights (\cite{H1979-1}, \cite{H1979-2}) 
coupled with dual weights (\cite{H1978-1}, \cite{H1978-2}), 
which is based on extended positive parts, a notion of metaphysical flavor, and somewhat elaborate. 
Our method may not provide an easy route either but can be applied rather straightforwardly; 
it is just a simple combination of elementary Fourier calculus and complex analytic nature of 
modular stuffs. 

The presentation below originates from the author's old work in 1990, which was addressed 
on the occasion of a satellite meeting of ICM90 held at Niigata University. 
The author would like to express hearty gratitude to Kichisuke Saito for his organization of 
the meeting and these records. 

\bigskip
\noindent
\textbf{Notation and Convention:} 
The positive part of a W*-algebra $M$ (resp.~its predual $M_*$) is denoted by $M_+$ 
(resp.~$M_*^+$). 

For a positive element $p$ in $M_+$ or $M_*^+$, its support projection in $M$ is denoted by $[p]$. 

For a functional $\varphi \in M_*^+$, 
the associated GNS-vector in the standard Hilbert space $L^2(M)$ of $M$ 
is denoted by $\varphi^{1/2}$ (natural notation though not standard) and the modular operator by 
$\Delta_\varphi$ so that $\Delta_\varphi(a\varphi^{1/2}) = \varphi^{1/2}a$ for $a \in [\varphi]M[\varphi]$. 

For $\varphi,\psi \in M_*^+$, $\sigma^{\varphi,\psi}_t$ stands for the relative modular group 
of $[\varphi]M[\psi]$, which is simply denoted by $\sigma^\varphi_t$ and expresses 
a modular automorphism group of the reduced algebra $[\varphi]M[\varphi]$ when $\varphi = \psi$. 

For convergence in $M$, w*-topology (resp.~s-topology or s*-topology) 
means weak operator topology (resp.~strong operator topology or *strong operator topology) 
as a von Neumann algebra on the standard Hilbert space $L^2(M)$. 

Direct integrals are indicated by $\oint$ instead of ordinary $\int^\oplus$. This is to avoid duplication 
of sum meanings. 

The notion of weights is used in a very restrictive sense: weights are orthogonal sums of 
functionals in $M_*^+$. 

For an interval $I$ contained in $[0,1]$, $T_I$ expresses the tubular domain based 
on an imaginary trapezoid $\{ (x,y) \in \R^2; x \leq 0, y \leq 0, -(x+y) \in I\}$: 
$T_I = \{ (z,w) \in \C^2; \Im z \leq 0, \Im w \leq 0, -(\Im z + \Im w) \in I\}$. 

A function $f: D \to M$ with $D \subset \C$ is said to be w*-analytic (s*-analytic) if 
it is w*-continuous (s*-continuous) and holomorphic when restricted to the interior $D^\circ$. 
Note that topologies are irrelevant for holomorphicity 
because weaker one implies power series expansions in norm. 

For real numbers $\alpha, \beta$, 
\[ 
\alpha \vee \beta = \max\{ \alpha,\beta\}, 
\quad 
\alpha \wedge \beta = \min\{ \alpha,\beta.\}. 
\]

\section{Standard Hilbert Spaces}
Given a faithful $\omega \in M_*^+$, we denote the associated GNS-vector by $\omega^{1/2}$ 
and identify the left and right GNS-spaces by 
the relation $\Delta_\omega^{1/2}(x\omega^{1/2}) = \omega^{1/2}x$, resulting in an $M$-bimodule 
$L^2(M,\omega) = \overline{M \omega^{1/2} M}$ 
with the positive cone $L^2(M,\omega)_+$ and the compatible *-operation given by 
$L^2(M,\omega)_+ = \overline{\{ a\omega^{1/2} a^*; a \in M \}}$ and 
$(a\omega^{1/2} b)^* = b^*\omega^{1/2} a^*$ in such a way that 
these constitute a so-called standard form of $M$. 

The dependence on $\omega$ as well as its faithfulness is then removed 
by the matrix ampliation technique: 
For each $\varphi \in M^+_*$, let $M\otimes \varphi^{1/2}\otimes M$ be a dummy of the algebraic 
tensor product $M\otimes M$, which is an $M$-bimodule in an obvious manner 
with a compatible *-operation defined by the relation 
$(a\otimes \varphi^{1/2}\otimes b)^* = b^*\otimes \varphi^{1/2} \otimes a^*$. 
On the algebraic direct sum 
\[ 
\bigoplus_{\varphi \in M^+_*} M\otimes \varphi^{1/2}\otimes M   
\]
of these *-bimodules, introduce a sesquiliear form by 
\begin{multline*} 
\left( \left. \bigoplus_{j=1}^n x_j\otimes \omega_j^{1/2}\otimes y_j \right| 
\bigoplus_{k=1}^n x_k' \otimes \omega_k^{1/2}\otimes y'_k \right)\\ 
= \sum_{j,k} ([\omega_k] (x'_k)^*x_j \omega_j^{1/2}| \omega_k^{1/2} y_k'y_j^*[\omega_j]), 
\end{multline*}
which is positive because of 
\begin{align*} 
\sum_{j,k} ([\omega_k] x_k^*x_j \omega_j^{1/2}| \omega_k^{1/2} y_ky_j^*[\omega_j])  
&= (X\omega^{1/2}|\omega^{1/2}Y)\\ 
&= (X^{1/2}\omega^{1/2} Y^{1/2}| X^{1/2} \omega^{1/2} Y^{1/2}) \geq 0. 
\end{align*}
Here $\omega = \text{diag}(\omega_1,\dots,\omega_n)$ denotes a diagonal functional 
on the $n$-th matrix ampliation $M_n(M)$ of $M$ and 
\[ 
X = [\omega] 
\begin{pmatrix}
x_1^*\\
\vdots\\
x_n^*
\end{pmatrix}
\begin{pmatrix}
x_1 & \dots & x_n 
\end{pmatrix}
[\omega] 
\quad 
\text{and}
\quad 
Y = [\omega] 
\begin{pmatrix}
y_1\\
\vdots\\
y_n
\end{pmatrix}
\begin{pmatrix}
y_1^* & \dots & y_n^* 
\end{pmatrix}
[\omega]
\]
are positive elments in $[\omega]M_n(M) [\omega]$. 
Recall that $[\omega] = \text{diag}([\omega_1], \dots, [\omega_n])$.  

The associated Hilbert space is denoted by $L^2(M)$ and 
the image of $a \otimes \varphi^{1/2} \otimes b$ in $L^2(M)$ by $a\varphi^{1/2}b$. 
Here the notation is compatible with the one for $L^2(M,\varphi)$ because 
\[ 
[\varphi]M[\varphi] \otimes \varphi^{1/2} \otimes [\varphi]M[\varphi] \ni 
a \otimes \varphi^{1/2} \otimes b \mapsto a\varphi^{1/2}b \in L^2(M,\varphi) 
\]
gives an isometric map by the very definition of inner products. 
Similar remarks are in order for left and right GNS spaces. 

The left and right actions of $M$ are compatible with taking quotients and they are bounded on $L^2(M)$: 
For $a \in M$, 
\[ 
\left\|\bigoplus_j ax_j\otimes \omega_j^{1/2}\otimes y_j \right\|^2  
= (\omega^{1/2}| ZJYJ \omega^{1/2})
\] 
with 
\[ 
0 \leq Z = [\omega] 
\begin{pmatrix}
x_1^*\\
\vdots\\
x_n^*
\end{pmatrix}
a^*a 
\begin{pmatrix}
x_1 & \dots & x_n
\end{pmatrix}[\omega] 
\leq \|a\|^2 X. 
\]
Moreover, these actions give *-representations of $M$: 
$(a\xi|\eta) = (\xi|a^*\eta)$ and $(\xi a|\eta) = (\xi| \eta a^*)$ for 
$\xi, \eta \in L^2(M)$ and $a \in M$, which is immediate from the definition of inner product. 

The *-operation on $L^2(M)$ is also compatible with the inner product: 
\begin{align*} 
\left\| \left(\bigoplus_j x_j\otimes \omega_j^{1/2}\otimes y_j\right)^* \right\|^2  
&= \left\| \bigoplus_j y^*_j\otimes \omega_j^{1/2}\otimes x^*_j \right\|^2 
= (Y\omega^{1/2}| \omega^{1/2}X)\\ 
&= ((\omega^{1/2}X)^*| (Y\omega^{1/2})^*) 
= (X\omega^{1/2}| \omega^{1/2}Y)\\ 
&= \left\| \bigoplus_j x_j\otimes \omega_j^{1/2}\otimes y_j \right\|^2.   
\end{align*}

In this way, we have constructed a *-bimodule $L^2(M)$ of $M$ in such a way that 
$L^2(M,\varphi) \subset L^2(M)$ for each $\varphi \in M^+_*$ and 
the closed subspaces $\overline{M\varphi^{1/2}}$, $\overline{\varphi^{1/2} M}$ 
in $L^2(M)$ are naturally identified with the left and right GNS spaces of $\varphi$ respectively. 
Moreover, for $\varphi, \psi \in M^+_*$, we have 
$[\varphi]\overline{M\psi^{1/2}} = \overline{\varphi^{1/2}M}[\psi]$ in $L^2(M)$, 
which is just a reflection of the fact that the same identification inside 
$L^2(M_n(M),\omega)$ is used in the definition of inner product. 

\section{Modular Algebras}

Recall the definition of (boundary) modular algebra which was introduced in \cite{AAMT}
to `resolve' various cocycle relations in modular theory. 
We shall here describe it without essential use of the notion of weights. 

Let $M$ be a W*-algebra which is assumed to admit a faithful $\omega \in M_*^+$ for the moment. 
The modular algebra $M(i\R)$ of $M$ is then the *-algebra generated by elements in $M$ and symbols 
$\varphi^{it}$ for $\varphi \in M_*^+$ and $t \in \R$ under the conditions that 
(i) $M(i\R)$ contains $M$ as a *-subalgebra, (ii) $\{ \varphi^{it}\}_{t \in \R}$ is a one-parameter group of 
partial isometries satisfying $\varphi^{i0} = [\varphi]$,  
(iii) $\varphi^{it} a = \sigma_t^{\varphi,\psi}(a) \psi^{it}$ for $\varphi,\psi \in M_*^+$, 
$a \in [\varphi]M[\psi]$ and $t \in \R$. 

By utilizing a faithful $\omega \in M_*^+$, it turns out that $M(i\R)$ is *-isomorphic 
to the algebraic crossed product of $M$ by $\{ \sigma^\omega_t\}$ and therefore 
$M(i\R)$ is an algebraic direct sum of $M(it) = M\omega^{it} = \omega^{it}M$, 
where $M(it) = \sum_{\varphi \in M_*^+} M\varphi^{it}M$. 

Thus the modular algebra $M(i\R)$ is 
$i\R$-graded in the sense that $M(it)^* = M(-it)$, $M(is)M(it) = M(i(s+t))$ and $M(i0) = M$.

We say that an element $a \in M$ is \textbf{finitely supported} if $a = [\varphi]a[\varphi]$ for 
some $\varphi \in M_*^+$. Let $M_f$ be the set of finitely supported elements in $M$. 

\begin{Lemma}
$M_f$ is a w*-dense *-subalgebra of 
$M$ and closed under sequential w*-limits in $M$. Moreover, 
\[ 
M_f = \sum_{\varphi \in M_*^+} M[\varphi] = \sum_{\varphi \in M_*^+} [\varphi]M. 
\] 
\end{Lemma}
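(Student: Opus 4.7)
The plan rests on one piece of support arithmetic: $[\varphi+\psi] = [\varphi] \vee [\psi]$ for $\varphi,\psi \in M_*^+$, which follows from $(\varphi+\psi)(x^*x)=0 \iff x[\varphi] = x[\psi] = 0$. This upgrades any finite family of functionals to a common dominator $\chi = \varphi_1+\cdots+\varphi_n$, and any countable family to $\varphi = \sum_n 2^{-n}\varphi_n/(1+\|\varphi_n\|) \in M_*^+$. I would address the four claims in the order: *-subalgebra structure, the two identities, w*-density, and sequential w*-closure.

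The *-subalgebra claim is immediate: $a \in [\varphi]M[\varphi]$ and $b \in [\psi]M[\psi]$ both lie in $[\chi]M[\chi]$ with $\chi = \varphi+\psi$, a *-subalgebra. For the identities $M_f = \sum_\varphi M[\varphi] = \sum_\varphi [\varphi]M$, the inclusions $M_f \subset \sum M[\varphi]$ and $M_f \subset \sum [\varphi]M$ follow from $a = a[\varphi] = [\varphi]a$; for the reverse, given closure under sums, it suffices to show $b[\varphi] \in M_f$ for $b \in M$, $\varphi \in M_*^+$. Taking $\psi = \varphi + b\varphi b^*$ with $(b\varphi b^*)(x) = \varphi(b^*xb)$, the support $[\psi]$ dominates both $[\varphi]$ (giving $b[\varphi]\cdot[\psi] = b[\varphi]$) and the left support of $b[\varphi]$ — the latter because $\varphi(b^*(1-p)b)=0 \iff (1-p)b[\varphi]=0$ identifies $[b\varphi b^*]$ precisely as that left support — so $b[\varphi] \in [\psi]M[\psi] \subset M_f$.

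For w*-density I would first establish $\sup\{[\varphi] : \varphi \in M_*^+\} = 1_M$: for any proposed upper bound $q < 1$, a nonzero vector $\xi \in (1-q)L^2(M)$ yields a normal state $\omega_\xi$ with $[\omega_\xi] \leq 1-q$ and $[\omega_\xi] \neq 0$, forcing $[\omega_\xi] \wedge q = 0$ and contradicting $[\omega_\xi] \leq q$. Upward-directedness then promotes this to strong convergence $[\varphi] \to 1$ by monotone projection convergence; and since $L^2(M)$ is standard, vector states $\omega_{\xi,\eta}$ determine the w*-topology, so $\omega_{\xi,\eta}([\varphi]a[\varphi]) = (a[\varphi]\xi,[\varphi]\eta) \to (a\xi,\eta)$ by boundedness of $a$, yielding $[\varphi]a[\varphi] \to a$ weakly*.

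Sequential closure is where the countable dominator is essential. Given $a_n = [\varphi_n]a_n[\varphi_n] \to a$ w*-ly, the countable sum $\varphi$ above satisfies $[\varphi] \geq [\varphi_n]$ for every $n$, so every $a_n \in [\varphi]M[\varphi]$; w*-continuity of the bilateral multiplication $b \mapsto [\varphi]b[\varphi]$ transmits the inclusion to $a$. I expect the principal subtlety to be the bookkeeping of left versus right supports in identifying $[b\varphi b^*]$, together with recognising that the closure statement is \emph{sequential} by necessity: only countably many positive functionals can be norm-summed in $M_*$, so the argument does not extend to nets and $M_f$ is not in general w*-closed — a caveat that must be respected in later uses of the lemma.
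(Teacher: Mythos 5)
Your proposal is correct and follows essentially the same route as the paper's proof: the common-dominator observation $[\varphi]\vee[\psi]\leq[\varphi+\psi]$ for the *-subalgebra structure, the auxiliary functional $\varphi+b\varphi b^*$ whose support absorbs the left support of $b[\varphi]$ for the two identities, $\vee_{\varphi}[\varphi]=1$ for w*-density, and the weighted countable sum $\sum_n 2^{-n}\varphi_n/(\cdots)$ for sequential w*-closure. The additional details you supply (and the caveat that the closure is only sequential) merely fill in what the paper's terse proof leaves implicit.
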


\begin{proof}
Clearly $M_f$ is closed under the *-operation and $M_f$ is a subalgebra in view of 
$[\varphi]\vee[\psi] \leq [\varphi + \psi]$. The *-subalgebra $M_f$ is then w*-dense in $M$ in view of 
$\vee_{\varphi \in M_*^+} [\varphi] = 1$. 
If $a = a[\varphi]$, $[a\varphi a^*]$ is the left support of $a$ and 
$[\varphi + a\varphi a^*]a[\varphi a\varphi a^*] = a$. 
Let $a$ be a w*-limit of $\{ a_n\}_{n \geq 1}$ in $M_f$ with $[\varphi_n][a_n][\varphi_n] = a_n$ 
for $n \geq 1$. Then, for $\varphi = \sum_{n=1}^\infty 2^{-n}\varphi_n/ \varphi_n(1) \in M_*^+$, 
$[\varphi]a[\varphi]$. 
\end{proof}

Now we relax the existence of faithful functionals in $M_*^+$ and set 
\[ 
M_f(i\R) = \bigcup_{\varphi \in M_*^+} [\varphi]M[\varphi](i\R),  
\]
where the natural inclusions $[\varphi]M[\varphi](i\R) \subset [\psi]M[\psi](i\R)$ for 
$\varphi, \psi \in M_*^+$ satisfying $[\varphi] \subset [\psi]$ are assumed in the union. 

Finally we add formal expressions of the form $\omega^{it} = \sum_{j \in I} \omega_j^{it}$ for families 
$\{ \omega_j \in M_*^+\}_{j \in I}$ of mutually orthogonal supports 
and allow products with elements in $M$
to get $\{ M(it) \}_{t \in \R}$ so that $M_f(it) \subset M(it)$ and $M(0) = M$. 
In what follows, 
a formal sum $\omega = \sum_{j \in I} \omega_j$ is referred to as a \textbf{weight}\index{weight} of $M$. 
A weight $\omega = \sum \omega_j$ 
is said to be \textbf{faithful}\index{faithful weight} if $1 = \sum [\omega_j]$ in $M$. 
Note that any weight is extended to a faithful one and 
$\{ \omega^{it} \}$ is a one-parameter group of unitaries in $M(i\R) = \oplus M(it)$ 
for a faithful $\omega$ and, 
for another choice of a faithful weight $\phi = \sum_{k \in J} \phi_k$ and $a \in M$, 
\[ 
\phi^{it}a\omega^{-it}  = \sum_{j,k} \phi_k^{it} a \omega_j^{-it}
\] 
defines a continuous family of elements in $M$ so that it consists of unitaries when $a=1$ and 
$\sigma^\omega_t(a) = \omega^{it}a\omega^{-it}$ gives an automorphic action of $\R$ on $M$. 

\begin{Remark} 
Here weights are introduced in a formal and restricted way. 
\end{Remark}

At this stage, we introduce two more classes of modular algebraic stuffs: 
\[ 
M(i\R + 1/2) = \sum_{t \in \R} M(it + 1/2), 
\quad 
M(i\R + 1) = \sum_{t \in \R} M(it + 1), 
\] 
with 
\[ 
M(it + 1/2) = \sum_{\varphi \in M_*^+} M\varphi^{it + 1/2} = \sum_{\varphi \in M_*^+} \varphi^{it + 1/2}M
\] 
and 
\[ 
M(it + 1) = \sum_{\varphi \in M_*^+} M\varphi^{it + 1} = \sum_{\varphi \in M_*^+} \varphi^{it + 1}M
\] 
so that $M(1/2) = L^2(M)$ and $M(1) = M_*$. 

These are $i\R$-graded *-bimodules of $M(i\R)$ in an obvious way 
and we have a natural module map $M(i\R + 1/2)\otimes_{M(i\R)} M(i\R + 1/2) \to M(i\R + 1)$ 
which respects the grading in the sense that $M(is + 1/2)M(it + 1/2) = M(i(s+t) + 1)$ for 
$s,t \in \R$. 
In particular, given a weight $\omega$ on $M$, 
we have $M(it + s) = M(s)\omega^{it} = \omega^{it} M(s)$ for $s=1/2,1$. 

The evaluation of $\varphi \in M_*$ at the unit $1 \in M$ is called the \textbf{expectation} of 
$\varphi$ and denoted by $\langle \varphi \rangle$. Note that the expectation satisfies 
the trace property for various combinations of multiplications such as 
$\langle a\varphi\rangle = \langle \varphi a\rangle$ and 
$\langle \varphi^{it} \xi \psi^{-it}\eta \rangle = \langle \psi^{-it} \eta \varphi^{it}\xi\rangle$ 
for $a \in M$, $\varphi, \psi \in M_*^+$ and $\xi,\eta \in L^2(M)$. 

The scaling $\varphi \mapsto e^{-s}\varphi$ on $M_*^+$ gives rise to a *-automorphic action 
$\theta_s$ of $s \in \R$ (called the \textbf{scaling automorphisms}) on these modular stuffs: 
$\theta_s(x\varphi^{it + r}) = e^{-ist - sr} x\varphi^{it + r}$ for $x \in M$, $r \in \{0, 1/2, 1\}$ 
and $t \in \R$. 

\begin{Remark}
Since elements in $L^2(M)$ and $M_*$ are always `finitely supported', we can decribe $M(it + s)$ 
($s=1/2,1$) without referring to weights. 
\end{Remark}

\section{Analytic Properties} 
We here collect well-known analytic properties of modular stuffs 
(proofs can be found in \cite{BR1} and \cite{Takesaki2} for example). 

\begin{Lemma}[Modular Extension]\label{ME}
For $\varphi, \psi \in M_*^+$ and $a \in M$, $\R \ni t \mapsto \varphi^{it} a \psi^{1-it} \in M_*$ 
is extended analytically to a norm-continuous function $\varphi^{iz}a\psi^{1-iz}$ 
on the strip $-1 \leq \Im z \leq 0$ with a bound 
\[ 
\|\varphi^{it + r} a\psi^{-it + 1-r}\| \leq \| \varphi a\|^r \| a\psi\|^{1-r}
\quad (0 \leq r \leq 1)
\] 
in such a way that 
\[ 
(\varphi^{iz}a\psi^{1-iz})|_{z = t - i} = \varphi^{1+it}a \psi^{-it}, 
\quad 
(\varphi^{iz}a\psi^{1-iz})|_{z = t - i/2} = \varphi^{it + 1/2}a \psi^{-it + 1/2}. 
\] 
\end{Lemma}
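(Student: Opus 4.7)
The plan is to reduce the relative case $(\varphi,\psi)$ to a single faithful functional by matrix ampliation, then combine analyticity of the modular automorphism group with Hadamard's three-line theorem. Concretely, set $\omega = \text{diag}(\varphi,\psi) \in M_2(M)_*^+$ and embed $a$ as the off-diagonal $\tilde a = e_{12}\otimes a \in M_2(M)$. By the very definition of the modular algebra, $\omega^{iz}\tilde a\,\omega^{1-iz}$ is supported in a single corner of $(M_2(M))_*$ and reading off that corner recovers $\varphi^{iz}a\psi^{1-iz}$; moreover $\tilde a\omega = e_{21}\otimes a\psi$ and $\omega\tilde a = e_{21}\otimes\varphi a$, with norms $\|a\psi\|$ and $\|\varphi a\|$ respectively. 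Hence it suffices to prove the lemma for faithful $\omega$ and $a\in M$ with the symmetric bound $\|\omega^{it+r}a\omega^{-it+1-r}\| \leq \|\omega a\|^r\|a\omega\|^{1-r}$.

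For faithful $\omega$, I would first pass to a $\sigma^\omega$-entire approximant: the Gaussian smearing $a_n = \sqrt{n/\pi}\int_\R e^{-ns^2}\sigma_s^\omega(a)\,ds$ satisfies $\|a_n\|\leq\|a\|$, is $*$-strongly convergent to $a$, and is entire for $\sigma^\omega$. For such analytic elements the modular-algebra identity $\omega^{iz}b = \sigma_z^\omega(b)\omega^{iz}$ propagates to all complex $z$, giving an \emph{entire} $M_*$-valued function
\[
\omega^{iz}a_n\omega^{1-iz} = \sigma_z^\omega(a_n)\,\omega.
\]
Plugging $z = t-i$ and $z = t-i/2$ and using $\omega c = \sigma_{-i}^\omega(c)\omega$ and $\omega^{1/2}c\omega^{1/2} = \sigma_{-i/2}^\omega(c)\omega$ for analytic $c$ (both consequences of the KMS condition together with the grading $M(1/2)\cdot M(1/2) = M(1)$) recovers exactly the boundary expressions $\omega^{1+it}a_n\omega^{-it}$ and $\omega^{it+1/2}a_n\omega^{-it+1/2}$.

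At $\Im z = 0$ and $\Im z = -1$ the norms read $\|\sigma_t^\omega(a_n)\omega\| = \|a_n\omega\| \leq \|a\omega\|$ and $\|\omega\sigma_t^\omega(a_n)\| = \|\omega a_n\| \leq \|\omega a\|$, since $\sigma^\omega$ fixes $\omega$ as an element of $M_*$. For the interior, fix $b\in M$ with $\|b\|\leq 1$ and set $F_n(z) = \langle b\,\sigma_z^\omega(a_n)\omega\rangle$; this is a bounded analytic function on $-1 < \Im z < 0$, continuous up to the closure, with $|F_n(t)|\leq \|a\omega\|$ and $|F_n(t-i)|\leq \|\omega a\|$. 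Hadamard's three-line theorem gives $|F_n(t-ir)| \leq \|\omega a\|^r \|a\omega\|^{1-r}$, and taking the supremum over unit $b$ yields the asserted $M_*$-norm bound for $a_n$.

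Finally, $\{\sigma_z^\omega(a_n)\omega\}_n$ is uniformly norm-bounded on the closed strip and converges on $\R$ to $\sigma_t^\omega(a)\omega$ in $M_*$-norm; a Vitali/Phragm\'en--Lindel\"of argument applied coordinatewise to each test $b\in M$ then gives convergence throughout the closed strip, producing the extension $\omega^{iz}a\omega^{1-iz}$ with the same analyticity, boundary values and three-line bound. Translating back through the matrix embedding gives the statement for $(\varphi,\psi)$. The principal difficulty is this last step: ensuring that norm-continuity --- not merely $w^*$-continuity --- survives removal of the analyticity assumption on $a_n$ up to the closed boundary. The standard remedy is to verify norm-continuity of $t\mapsto\varphi^{it}a\psi^{1-it}$ directly on $\R$ via s*-continuity of the modular partial isometries combined with the joint continuity of multiplication $M_*\times M\to M_*$, and then to leverage the uniform bound on the strip to promote the limit to a norm-continuous function on the closure.
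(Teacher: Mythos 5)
The paper does not prove this lemma at all: it is listed among ``well-known analytic properties'' with a pointer to \cite{BR1} and \cite{Takesaki2}, so the only meaningful comparison is with the standard textbook argument --- which is essentially what you give, and it is correct in substance. Two small slips: $\tilde a\omega = e_{12}\otimes a\psi$ and $\omega\tilde a = e_{12}\otimes \varphi a$ (not $e_{21}$), and $\omega = \mathrm{diag}(\varphi,\psi)$ is faithful on $M_2(M)$ only when $\varphi$ and $\psi$ are faithful on $M$, so you should first cut down to $[\omega]M_2(M)[\omega]$; this is harmless since $\varphi^{iz}a\psi^{1-iz}$ depends only on $[\varphi]a[\psi]$. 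More importantly, the step you flag as ``the principal difficulty'' is already resolved by the estimate you proved: apply your three-line bound to $a_n-a_m$ instead of $a_n$. Since $a_n\to a$ $*$-strongly with $\|a_n\|\le\|a\|$, both $\|(a_n-a_m)\omega\|\le\|(a_n-a_m)\omega^{1/2}\|_2\,\|\omega^{1/2}\|_2$ and $\|\omega(a_n-a_m)\|$ tend to $0$, whence
\[
\sup_{-1\le \Im z\le 0}\ \|\sigma^\omega_z(a_n)\omega-\sigma^\omega_z(a_m)\omega\|
\ \le\ \max\bigl\{\|(a_n-a_m)\omega\|,\ \|\omega(a_n-a_m)\|\bigr\}\ \longrightarrow\ 0 .
\]
So $\sigma^\omega_z(a_n)\omega$ is uniformly Cauchy in $M_*$-norm on the \emph{closed} strip, and the limit is automatically norm-continuous there, analytic in the interior, and inherits both boundary identifications and the interpolation bound. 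No Vitali or Phragm\'en--Lindel\"of argument, and no separate verification of norm-continuity on the boundary lines, is needed; your closing paragraph can be replaced by this one-line observation.
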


\begin{Corollary}[KMS condition] Let $\varphi, \psi \in M^*_+$ and $a \in [\varphi]M[\psi]$. 
Then the function $\sigma^{\varphi,\psi}_t(a)\psi^{1/2} = \varphi^{it} a \psi^{-it} \psi^{1/2}$ 
of $t \in \R$ 
is analytically extended to an $L^2(M)$-valued continuous function $\varphi^{iz}a \psi^{-iz + 1/2}$ of 
$z \in \R - i[0,1/2]$ so that 
$(\varphi^{iz} a \psi^{-iz + 1/2})_{z=t - i/2} = \varphi^{1/2} \varphi^{it}a \psi^{-it} = 
\varphi^{1/2} \sigma^{\varphi,\psi}_t(a)$. 
\end{Corollary}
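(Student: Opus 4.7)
The plan is to deduce the $L^2(M)$-valued analytic extension of $F(z) := \varphi^{iz} a \psi^{-iz + 1/2}$ from the $M_*$-valued extension of $\varphi^{iz}a\psi^{1-iz}$ already furnished by Lemma \ref{ME}, by pairing $F(z)$ against a dense family of test vectors in $L^2(M)$.

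First, on the boundary of the strip $\R - i[0, 1/2]$ the expression $F(z)$ is manifestly in $L^2(M) = M(1/2)$: for $z = t$ real, $F(t) = \sigma^{\varphi,\psi}_t(a)\psi^{1/2}$, norm-continuous in $t$ by strong continuity of the relative modular group; for $z = t - i/2$, $F(t - i/2) = \varphi^{1/2}\sigma^{\varphi,\psi}_t(a)$, similarly norm-continuous. A direct computation using cyclicity of $\langle \cdot \rangle$ and the $i\R$-graded multiplication yields
\[
\|F(z)\|^2 = \langle F(z)^* F(z)\rangle = \langle \psi^{1 - 2s}\, a^*\, \varphi^{2s}\, a\rangle, \quad s := -\Im z \in [0, 1/2],
\]
interpolating between $\psi(a^*a)$ at $s = 0$ and $\varphi(aa^*)$ at $s = 1/2$; the log-convex estimate of Lemma \ref{ME} applied to $\psi^{1-2s}a^*\varphi^{2s}$ keeps this uniformly finite on the closed strip.

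For weak analyticity, pair $F(z)$ with a test vector $\eta = b\psi^{1/2}$, $b \in M$, and use cyclicity of the expectation:
\[
(\eta | F(z)) = \langle \psi^{1/2}\, b^*\, \varphi^{iz} a \psi^{-iz + 1/2}\rangle = \langle \varphi^{iz} a \psi^{1 - iz}\, b^* \rangle,
\]
which is norm-continuous on $\Im z \in [-1, 0]$ and analytic in its interior by Lemma \ref{ME}, since $\varphi^{iz}a\psi^{1-iz}$ is $M_*$-analytic and right multiplication by $b^*$ followed by $\langle \cdot \rangle$ is a bounded linear operation on $M_*$. An analogous computation with $\eta = \varphi^{1/2}c$, $c \in M$, gives $(\eta | F(z)) = \langle c^*\, \varphi^{iz + 1/2} a \psi^{-iz + 1/2}\rangle$, which the change of variable $w = z - i/2$ reduces to the $M_*$-analyticity of $\varphi^{iw}a\psi^{1-iw}$ on $\Im w \in [-1, -1/2]$, safely inside the Lemma \ref{ME} strip.

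The two test families $\{b\psi^{1/2}\}$ and $\{\varphi^{1/2}c\}$ together span a dense subspace of the closed bimodule component $[\varphi]L^2(M)[\psi]$ to which each $F(z)$ belongs, via the identification $[\varphi]\overline{M\psi^{1/2}} = \overline{\varphi^{1/2}M}[\psi]$ recorded at the end of Section 1. Combined with the uniform norm bound, the resulting weak continuity on the closed strip and weak analyticity in the interior upgrade to norm continuity and norm analyticity by the standard Hilbert-space equivalence of weak and strong holomorphy for locally bounded families. The main obstacle is the density verification for the test family in $[\varphi]L^2(M)[\psi]$, which reduces to unwinding the bimodule description of $L^2(M)$ together with the trivial reduction $F(z) = [\varphi]F(z)[\psi]$ from $a \in [\varphi]M[\psi]$.
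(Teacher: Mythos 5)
Your overall architecture is the natural one and matches what the paper implies (the paper gives no proof of this Corollary, referring to \cite{BR1} and \cite{Takesaki2}; it is positioned as a consequence of Lemma~\ref{ME}, which is exactly the route you take): pair $F(z)=\varphi^{iz}a\psi^{-iz+1/2}$ against the test vectors $b\psi^{1/2}$ and $\varphi^{1/2}c$, reduce the resulting scalar functions to the $M_*$-valued extension of Lemma~\ref{ME}, and note that these test vectors are dense in $[\varphi]L^2(M)[\psi]=[\varphi]\overline{M\psi^{1/2}}$, which is where all the boundary values live. The two pairing computations and the change of variable $w=z-i/2$ are correct.

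The genuine gap is the step you call a ``direct computation'': the identity $\|F(z)\|^2=\langle\psi^{1-2s}a^*\varphi^{2s}a\rangle$ presupposes that $F(z)$ is already an element of $L^2(M)$ for interior $z$, which is precisely what is being constructed; for non-real $z$ the symbol $\varphi^{iz}$ is not an element of the modular algebra, so cyclicity of $\langle\cdot\rangle$ cannot be invoked. This identity is load-bearing twice over. First, your weak pairings only yield $|(b\psi^{1/2}\,|\,F(z))|\leq\|\varphi^{iz}a\psi^{1-iz}\|\,\|b\|$, a bound in the operator norm of $b$, not in $\|b\psi^{1/2}\|_2$; without an $L^2$-bound the weakly defined $F(z)$ is only a functional on $M\psi^{1/2}$, not a vector. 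Second, ``weakly analytic plus locally bounded implies norm analytic'' handles only the open strip; norm continuity up to the boundary line $\Im z=-1/2$ (which is where the KMS condition actually lives) requires convergence of the norms $\|F(z)\|\to\|F(t-i/2)\|$, which you get only from the unproven norm identity. The standard repair is Phragm\'en--Lindel\"of on the strip: each scalar function $g_\eta(z)=(\eta\,|\,F(z))$ is bounded and analytic, with $|g_\eta(t)|\leq\|\eta\|\,\psi(a^*a)^{1/2}$ and $|g_\eta(t-i/2)|\leq\|\eta\|\,\varphi(aa^*)^{1/2}$ on the two boundary lines, whence $|g_\eta(z)|\leq\|\eta\|\max\{\psi(a^*a)^{1/2},\varphi(aa^*)^{1/2}\}$ throughout; this makes $F(z)$ a genuine vector with a uniform bound. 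For continuity at the boundary one still needs the norm identity (or, equivalently, the identification $F(z)=\Delta_{\varphi,\psi}^{iz}(a\psi^{1/2})$ together with spectral calculus for the relative modular operator, $\|\Delta_{\varphi,\psi}^{iz}\xi\|^2=\int\lambda^{2s}\,d\|E(\lambda)\xi\|^2$ with dominated convergence in $s\in[0,1/2]$ --- this is the textbook proof the paper points to). As written, your argument establishes the extension on the open strip with the correct boundary limits in the weak topology only.
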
 

\begin{Lemma}\label{majorization1}
Let $\omega \in M_*^+$ be faithful and let $a \in M$. 
Then the following conditions are equivalent. 
\begin{enumerate}
\item 
The inequality $a^*\omega a \leq \omega$ holds in $M_*^+$. 
\item
We can find a function $a(z) \in M$ of $z \in \R - i[0,1/2]$ such that 
$a(t) = \omega^{it} a \omega^{-it}$ for $t \in \R$, $a(z)\xi \in L^2(M)$ is norm-analytic in $z$ 
for any $\xi \in L^2(M)$ and $\| a(-i/2)\| \leq 1$. 
\item 
We can find an element $b \in M$ satisfying $\| b\| \leq 1$ and $\omega^{1/2}a = b \omega^{1/2}$. 
\end{enumerate}
Moreover, if this is the case, with the notation in (ii), $\xi a(z) \in L^2(M)$ is norm-continuous 
in $z$ for every $\xi \in L^2(M)$. 
\end{Lemma}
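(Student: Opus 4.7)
The strategy is to prove (i) $\Leftrightarrow$ (iii) by a standard-form commutant argument, and (iii) $\Leftrightarrow$ (ii) by complex-analytic interpolation using the preceding KMS Corollary together with the Phragm\'en--Lindel\"of theorem; condition (iii) serves as the bridge. For (i) $\Leftrightarrow$ (iii), I set $\Omega = \omega^{1/2}$ and rewrite $a^*\omega a \leq \omega$ as $(a^*\Omega\,|\,x\,a^*\Omega) \leq (\Omega\,|\,x\Omega)$ for $x \in M_+$. This is the standard contractivity criterion: the map $x\Omega \mapsto x\,a^*\Omega$ extends to a contraction on $L^2(M)$ that commutes with the left $M$-action, hence lies in $M' = JMJ$. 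Writing it as $JbJ$ with $b \in M$, $\|b\| \leq 1$, and using $J\Omega = \Omega$ together with $J\xi = \xi^*$, one obtains $a^*\Omega = \Omega b^*$, equivalently $\omega^{1/2} a = b\omega^{1/2}$. The converse is the same calculation run backward, using that right multiplication by $b^*$ on $L^2(M)$ has norm $\|b\|$.

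For (iii) $\Rightarrow$ (ii), I define $a(z)(\omega^{1/2} y) := \omega^{iz} a\, \omega^{-iz+1/2} y$ on the dense subspace $\omega^{1/2} M$; the right-hand side is $L^2(M)$-norm analytic in $z \in \R - i[0,1/2]$ by the preceding KMS Corollary. On the upper boundary $z = t$ this reduces to $\sigma_t^\omega(a)\omega^{1/2} y$, so $a(t) = \sigma_t^\omega(a)$ with operator norm $\|a\|$. On the lower boundary $z = t - i/2$, using (iii),
\[
\omega^{it+1/2} a\, \omega^{-it} y = \omega^{it}\bigl(b\omega^{1/2}\bigr)\omega^{-it} y = \sigma_t^\omega(b)\,\omega^{1/2} y,
\]
so $a(t - i/2) = \sigma_t^\omega(b)$ with operator norm $\|b\| \leq 1$. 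Applying Phragm\'en--Lindel\"of to the scalar functions $z \mapsto (\xi\,|\,a(z)\omega^{1/2} y)$ with these two boundary bounds produces a uniform operator-norm bound on the interior, so each $a(z)$ extends to a bounded operator on $L^2(M)$. The defining formula shows that $a(z)$ commutes with right multiplication by $M$, so $a(z) \in M$, and in particular $\|a(-i/2)\| = \|b\| \leq 1$.

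For (ii) $\Rightarrow$ (iii), set $b := a(-i/2)$. Both $z \mapsto a(z)\omega^{1/2} y$ and the KMS extension $z \mapsto \omega^{iz} a\, \omega^{-iz+1/2} y$ are $L^2$-norm analytic on the open strip, continuous to its closure, and agree on the real axis; uniqueness of analytic continuation makes them coincide, and evaluating at $z = -i/2$ yields $b\omega^{1/2} = \omega^{1/2} a$. The moreover clause follows from the symmetric identity $\omega^{1/2} a(z) = \omega^{iz} b\, \omega^{-iz+1/2}$, obtained by applying the KMS Corollary to $b$ and matching on the real axis; density of $M\omega^{1/2}$ together with the uniform operator bound on $a(z)$ then extends norm-continuity of $\xi a(z)$ to all $\xi \in L^2(M)$.

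The principal obstacle is promoting a densely defined analytic family $z \mapsto a(z)\omega^{1/2}y$ on $\omega^{1/2} M$ to a uniformly bounded, $M$-valued, norm-analytic function on the closed strip. This requires simultaneously the $L^2$-analyticity from the KMS Corollary, separate operator-norm bounds $\|a\|$ and $\|b\|$ at the two horizontal edges, a Phragm\'en--Lindel\"of interpolation in the interior, and the commutant characterization of $M$ in the standard form.
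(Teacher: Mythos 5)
Your argument is correct. Note, however, that the paper itself gives no proof of this lemma: it appears in the section that "collects well-known analytic properties of modular stuffs," with the reader referred to Bratteli--Robinson and Takesaki. So there is no in-paper argument to compare against; what you have written is essentially the standard textbook proof, and it is the right one. The commutant step (the contraction $x\omega^{1/2}\mapsto xa^*\omega^{1/2}$ lies in $JMJ$) correctly yields (i)$\Leftrightarrow$(iii), and the KMS extension $\omega^{iz}a\omega^{-iz+1/2}$ with boundary values $\sigma_t^\omega(a)\omega^{1/2}$ and $\sigma_t^\omega(b)\omega^{1/2}$ is exactly the right bridge to (ii). Two small points deserve an explicit word if you write this up: the three-lines/Phragm\'en--Lindel\"of step needs the a priori boundedness of $z\mapsto\omega^{iz}a\omega^{-iz+1/2}y$ on the closed strip, which follows from $\|\Delta_\omega^{r}\xi\|\leq\max\{\|\xi\|,\|\Delta_\omega^{1/2}\xi\|\}$ for $0\leq r\leq 1/2$ applied to $\xi=a\omega^{1/2}$; and your "matching on the real axis" steps identify two functions analytic in the open strip that agree only on the boundary line $\R$, so the identification goes through Schwarz reflection across $\R$ (the difference vanishes on $\R$, extends analytically across it, and hence vanishes identically), not through ordinary uniqueness of analytic continuation from an interior set.
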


\begin{Corollary}\label{majorization2}
For $\varphi, \psi \in M_*^+$, the following conditions are equivalent. 
\begin{enumerate}
\item The inequality $\varphi \leq \psi$ holds in $M_*^+$. 
\item $[\varphi] \leq [\psi]$ and the function $\varphi^{it}\psi^{-it}$ of $t \in \R$ is analytically extended 
to an $M$-valued function $\varphi^{iz}\psi^{-iz}$ of $z \in \R - i[0,1/2]$ so that 
$\varphi^{iz}\psi^{-iz}\xi \in L^2(M)$ is norm-continuous in $z$ for any $\xi \in L^2(M)$ and 
$\| \varphi^{1/2}\psi^{-1/2}\| \leq 1$. 
\item We can find an element $c \in M$ satisfying $\| c\| \leq 1$ and $\varphi^{1/2} = c\psi^{1/2}$. 
\end{enumerate}
Moreover, if this is the case, $\xi \varphi^{iz}\psi^{-iz} \in L^2(M)$ is norm-continuous in 
$z \in \R - i[0,1/2]$ for any $\xi \in L^2(M)$. 
\end{Corollary}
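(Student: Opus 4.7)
The plan is to deduce Corollary~\ref{majorization2} from Lemma~\ref{majorization1} by a $2\times 2$-matrix ampliation. Each of (i)--(iii) entails $[\varphi]\leq[\psi]$ (in (i) because $\psi(p)=0\Rightarrow\varphi(p)=0$ for a projection $p$; in (ii) by assumption; in (iii) because the right support of $c\psi^{1/2}=\varphi^{1/2}$ is at most $[\psi]$), so I take this as a standing hypothesis.

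Next I form $\mathcal{M} = M_2(M)$ with the diagonal functional $\omega = \text{diag}(\varphi,\psi) \in \mathcal{M}_*^+$ and pass to $[\omega]\mathcal{M}[\omega]$, where $\omega$ becomes faithful. Setting $a = \begin{pmatrix}0 & [\varphi]\\ 0 & 0\end{pmatrix}$ (which lies in $[\omega]\mathcal{M}[\omega]$ because $[\varphi]\in[\varphi]M[\psi]$), I check that the three conditions of Lemma~\ref{majorization1} applied to $(\omega,a)$ are exactly those of Corollary~\ref{majorization2} read entrywise. For (i), direct computation gives $(a^*\omega a)(x) = \varphi(x_{22})$, which compared with $\omega(x)=\varphi(x_{11})+\psi(x_{22})$ on positive $x$ makes $a^*\omega a\leq\omega$ equivalent to $\varphi\leq\psi$. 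For (ii), diagonality of $\omega^{iz}$ yields
\[
\omega^{iz} a \omega^{-iz} = \begin{pmatrix}0 & \varphi^{iz}\psi^{-iz}\\ 0 & 0\end{pmatrix},
\]
so its analytic extension with boundary norm $\leq 1$ is just Corollary~\ref{majorization2}(ii) read in the $(1,2)$-entry; the $L^2$-continuity clauses transfer by feeding in vectors of the form $\begin{pmatrix}0 & \xi\\ 0 & 0\end{pmatrix}$ or $\begin{pmatrix}0 & 0\\ \xi & 0\end{pmatrix} \in L^2([\omega]\mathcal{M}[\omega])$ and reading off entries. For (iii), diagonality of $\omega^{1/2}$ makes $\omega^{1/2}a = \begin{pmatrix}0 & \varphi^{1/2}\\ 0 & 0\end{pmatrix}$, and the equation $\omega^{1/2}a = b\omega^{1/2}$ with $\|b\|\leq 1$ reduces to $b_{12}\psi^{1/2}=\varphi^{1/2}$ with $\|b_{12}\|\leq 1$ (the remaining entries of $b$ must annihilate $\varphi^{1/2}$ or $\psi^{1/2}$); I take $c := b_{12}$, and conversely given $c$ I set $b = \begin{pmatrix}0 & c\\ 0 & 0\end{pmatrix}$.

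The one point that warrants care is the verification that the modular symbols $\omega^{iz}$ and $\omega^{1/2}$ attached to the block-diagonal functional $\omega$ on $\mathcal{M}$ are themselves block-diagonal with entries $\varphi^{iz},\psi^{iz}$ and $\varphi^{1/2},\psi^{1/2}$. This is what the ampliation construction of the standard Hilbert space and the modular algebra in Sections~1 and~2 amounts to for diagonal functionals, so the identification is automatic; once it is in hand, the corollary falls out of Lemma~\ref{majorization1} by a purely entrywise reading.
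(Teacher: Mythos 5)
The paper offers no proof of this corollary at all: it sits in the section that ``collects well-known analytic properties'' with proofs delegated to \cite{BR1} and \cite{Takesaki2}. Your reduction to Lemma~\ref{majorization1} via the balanced functional $\omega=\mathrm{diag}(\varphi,\psi)$ on $M_2(M)$ is exactly the canonical derivation, and your entrywise verifications of (i) and (iii), as well as the identification $\omega^{iz}a\omega^{-iz}=\begin{pmatrix}0&\varphi^{iz}\psi^{-iz}\\0&0\end{pmatrix}$, are correct; the diagonality of $\omega^{iz}$ and $\omega^{1/2}$ is indeed built into the ampliation constructions of Sections~1--2.

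One small point deserves a sentence more than you give it: the $L^2$-continuity clause. Reading off entries of $A(z)\Xi$ for $\Xi\in[\omega]L^2(M_2(M))[\omega]$ only yields norm-continuity of $\varphi^{iz}\psi^{-iz}\xi$ for $\xi\in[\psi]L^2(M)\bigl([\varphi]\vee[\psi]\bigr)$, whereas the corollary asserts it for every $\xi\in L^2(M)$, and $[\psi]L^2(M)([\varphi]\vee[\psi])$ need not be dense in $[\psi]L^2(M)$ when $[\varphi]\vee[\psi]\neq 1$. This is repaired routinely: since $\varphi^{iz}\psi^{-iz}=\varphi^{iz}\psi^{-iz}[\psi]$ and $\sup_z\|\varphi^{iz}\psi^{-iz}\|<\infty$ (the Banach--Steinhaus remark following the corollary), it suffices to treat $\xi\in L^2(M)[\chi]$ for arbitrary $\chi\in M_*^+$, which one gets either by rerunning your argument with $\mathrm{diag}(\varphi,\psi,\chi)$ on $M_3(M)$ or by noting that left multiplication commutes with the right action. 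With that addendum the proof is complete.
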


\begin{Remark}
Under the above majorization conditions, the relevant analytic extensions 
are norm-bounded as $M$-valued functions of $z \in \R -i[0,1/2]$ thanks to Banach-Steinhaus theorem. 
\end{Remark}

\section{Sectional Continuity}
We now describe continuity properties of families $\{ M(it + r)\}_{t \in \R}$ for $r =0, 1/2$ and $1$. 
Let us begin with a simple observation on the continuity of modular actions: 
Let $\varphi = \sum \varphi_j$ and $\psi = \sum \psi_k$ be weights on $M$ in our sense. 
For $\xi \in L^2(M)$, 
\[ 
\varphi^{it} \xi \psi^{-it} = \sum_{j,k} \varphi_j^{it}\xi \psi_k^{-it}
\]
is norm-continuous in $t \in \R$ as an orthogonal sum of $L^2(M)$-valued norm-continuous functions 
$\varphi_j^{it}\xi \psi_k^{-it}$. As any $\phi \in M_*^+$ has an expression $\xi \eta$ with 
$\xi, \eta \in L^2(M)$, one sees that 
\[ 
\varphi^{it}\phi \psi^{-it} = (\varphi^{it} \xi \omega^{-it}) (\omega^{it} \eta \psi^{-it}) 
\]
($\omega$ being an auxiliary faithful weight) 
is an $M_*$-valued norm-continuous function of $t \in \R$ as a product of $L^2(M)$-valued 
norm-continuous functions. 

The following facts on continuity of sections of $\{ M(it+r)\}$ 
are then more or less straightforward from this observation. 

\begin{Lemma}\label{WC}
For a section $x = \{ x(t)\}$ of $\{ M(it)\}$, the following conditions are equivalent. 
\begin{enumerate}
\item 
There exists a faithful weight $\omega$ on $M$ such that $\omega^{-it}x(t) \in M$ is 
w*-continuous 
in $t \in \R$. 
\item 
There exists a faithful weight $\omega$ on $M$ such that $x(t)\omega^{-it} \in M$ is 
w*-continuous 
in $t \in \R$. 
\item 
For any faithful weight $\omega$ on $M$, $\omega^{-it}x(t) \in M$ is w*-continuous 
in $t \in \R$. 
\item 
For any faithful weight $\omega$ on $M$, $x(t)\omega^{-it} \in M$ is w*-continuous 
in $t \in \R$. 
\item 
For any $\phi \in M_*^+$, $\phi^{-it} x(t) \in M$ is w*-continuous in $t \in \R$. 
\item 
For any $\phi \in M_*^+$, $x(t)\phi^{-it} \in M$ is w*-continuous in $t \in \R$. 
\end{enumerate}
Moreover, if $\{ x(t)\}$ satisfies these equivalent conditions, $\| x(t)\|$ is 
locally bounded in $t \in \R$. 

We say that a section $\{ x(t)\}$ is \textbf{w*-continuous}\index{w*-continuity} 
if it satisfies any of these equivalent conditions.
\end{Lemma}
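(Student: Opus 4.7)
The plan is to establish the two equivalence chains (iii)$\Leftrightarrow$(i)$\Leftrightarrow$(v) on the left and (iv)$\Leftrightarrow$(ii)$\Leftrightarrow$(vi) on the right, then bridge them via (i)$\Leftrightarrow$(ii). The trivial directions are (iii)$\Rightarrow$(i) and (iv)$\Rightarrow$(ii). Before anything else, I would extract local boundedness of $\|x(t)\|$ from any of the conditions: w*-continuity of $\omega^{-it}x(t)$ makes each matrix coefficient $t \mapsto (\omega^{-it}x(t)\xi | \eta)$ continuous, hence bounded on every compact $K \subset \R$; two applications of the Banach--Steinhaus principle (first varying $\eta$ with $\xi$ fixed, then varying $\xi$) upgrade this to a uniform operator-norm bound on $\omega^{-it}x(t)$ over $K$, which passes to $\|x(t)\|$ because $\omega^{-it}$ is a unitary on $L^2(M)$ for faithful $\omega$.

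The core nontrivial step is (i)$\Rightarrow$(iii). Given faithful weights $\omega$ and $\tilde\omega$, I factor
\[
\tilde\omega^{-it}x(t) = (\tilde\omega^{-it}\omega^{it})\cdot(\omega^{-it}x(t)).
\]
The cocycle factor $\tilde\omega^{-it}\omega^{it} \in M$ is a uniformly bounded (in fact unitary) family, and it is s*-continuous: applied to $\xi \in L^2(M)$ and using $\tilde\omega^{-it}\tilde\omega^{it} = 1$, I rewrite
\[
\tilde\omega^{-it}\omega^{it}\xi = \tilde\omega^{-it}(\omega^{it}\xi\tilde\omega^{-it})\tilde\omega^{it},
\]
which is a composition of two instances of the norm-continuous modular action $\varphi^{it}\eta\psi^{-it}$ recorded at the start of this section; joint continuity in $t$ follows from the isometric character of those actions on $L^2(M)$. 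Combined with w*-continuity and local boundedness of $\omega^{-it}x(t)$, the product is w*-continuous by the standard fact that uniformly bounded s*-continuous families multiplied by locally bounded w*-continuous families stay w*-continuous. The symmetric argument yields (ii)$\Rightarrow$(iv). The bridge (i)$\Leftrightarrow$(ii) then comes from the identity
\[
x(t)\omega^{-it} = \omega^{it}\cdot\omega^{-it}x(t)\cdot\omega^{-it} = \sigma^\omega_t(\omega^{-it}x(t))
\]
(valid since $\omega^{it}\omega^{-it} = 1$ for faithful $\omega$) together with the joint w*-continuity of $\sigma^\omega_t$ on bounded subsets of $M$, whose predual action on $M_*$ is norm-continuous in $t$.

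For (iii)$\Leftrightarrow$(v): in one direction, given $\phi \in M_*^+$, extend to a faithful weight $\omega = \phi + \phi'$ with $[\phi'] = 1 - [\phi]$; then $\phi^{-it}x(t) = [\phi]\cdot\omega^{-it}x(t)$, and left multiplication by the fixed projection $[\phi]$ preserves w*-continuity, giving (iii)$\Rightarrow$(v). Conversely, given (v), pick any faithful weight $\omega = \sum_j \omega_j$ with $\omega_j \in M_*^+$; the identity $\omega^{-it}x(t) = \sum_j \omega_j^{-it}x(t)$ exhibits $\omega^{-it}x(t)$ as a sum with mutually orthogonal left supports $[\omega_j]$, each summand w*-continuous by (v). For any $\phi \in M_*^+$, the dominating estimate $|\phi(\omega_j^{-it}x(t))| \leq C\phi([\omega_j])$ with $\sum_j \phi([\omega_j]) = \phi(1) < \infty$ makes $\sum_j \phi(\omega_j^{-it}x(t))$ converge uniformly in $t$ on the compact intervals where the local bound $C$ holds, transferring continuity to the sum and hence giving (i). The same reasoning handles (iv)$\Leftrightarrow$(vi).

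The main obstacle I anticipate is the s*-continuity of $\tilde\omega^{-it}\omega^{it}$. The single-functional case is immediate from the opening norm-continuity observation, but when both weights are genuine orthogonal sums of functionals, one must verify joint continuity of the composite $\tilde\omega^{-it}\eta(t)\tilde\omega^{it}$ (with $\eta(t) = \omega^{it}\xi\tilde\omega^{-it}$ itself norm-continuous) uniformly across orthogonal pieces; that is where the support-projection bookkeeping and the isometric character of the modular actions must be marshalled with care.
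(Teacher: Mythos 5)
Your architecture---deriving everything from the opening observation that $\varphi^{it}\xi\psi^{-it}$ is norm-continuous for weights $\varphi,\psi$ and $\xi\in L^2(M)$, via the cocycle factorization $\tilde\omega^{-it}x(t)=(\tilde\omega^{-it}\omega^{it})(\omega^{-it}x(t))$ and the bridge $x(t)\omega^{-it}=\sigma^\omega_t(\omega^{-it}x(t))$---is exactly what the paper intends (it offers no written proof, asserting the lemma is ``more or less straightforward from this observation''). The implications among (i)--(iv), the extraction of local boundedness from the faithful-weight conditions by Banach--Steinhaus, and (iii)$\Rightarrow$(v) are sound, including your resolution of the s*-continuity of $\tilde\omega^{-it}\omega^{it}$ by the two-sided conjugation trick.

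The gap is in (v)$\Rightarrow$(i). First, the dominating estimate is wrong: for $b=\omega_j^{-it}x(t)=[\omega_j]b$ with $\|b\|\le C$, Cauchy--Schwarz gives $|\phi([\omega_j]b)|\le C\,\phi([\omega_j])^{1/2}\phi(1)^{1/2}$, not $C\,\phi([\omega_j])$, and $\sum_j\phi([\omega_j])^{1/2}$ need not converge even though $\sum_j\phi([\omega_j])\le\phi(1)$. This part is repairable by testing against vector functionals instead: with $P_N=\sum_{j>N}[\omega_j]$ the tail of the series is $(\omega^{-it}x(t)\xi\,|\,P_N\eta)$, controlled by $\|\omega^{-it}x(t)\xi\|\,\|P_N\eta\|$ with $\|P_N\eta\|\to 0$. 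Second, and more seriously, both your estimate and this repair need a bound on $\|\omega_j^{-it}x(t)\|$ that is uniform in $j$ and locally uniform in $t$, i.e., local boundedness of $\|x(t)\|$; but you established that only under the faithful-weight hypotheses (i)--(iv), which is what you are trying to deduce---a circularity. Banach--Steinhaus applied to condition (v) alone yields, for each separate $\phi$, a local bound on $\|\phi^{-it}x(t)\|$, hence on each $\|[\omega_j]\,\omega^{-it}x(t)\|$, but the supremum of these over $j$ is not controlled. The fix is to prove local boundedness directly from (v)/(vi) first: for each $\psi\in M_*^+$ one has $\psi(x(t)^*x(t))=\|x(t)\psi^{-it}\cdot\psi^{it+1/2}\|^2\le\|x(t)\psi^{-it}\|^2\,\psi(1)$, which is locally bounded in $t$ by (vi); thus $\{x(t)^*x(t)\}_{t\in K}$ is pointwise bounded on the Banach space $M_*$ and the uniform boundedness principle gives $\sup_{t\in K}\|x(t)\|<\infty$. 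With that in hand, your tail argument (corrected as above) closes the loop.
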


We here introduce the *-operation on sections by 
\[ 
x^*(t) = x(-t)^* \in M(it+s)
\quad 
\text{for a section $\{x(t) \in M(it+r)\}$.} 
\] 
As a consequence of the above lemma, for a section $x(t) \in M(it)$, 
$x^*(t)$ as well as $ax(t)b$ with $a, b \in M$ are w*-continuous if so is $x(t)$. 

\begin{Lemma}\label{NC}
Let $p=1$ or $2$ with the notation $L^1(M) = M_*$ for $p=1$. 
Then the following conditions on a section $\{ \xi(t)\}$ of $\{ M(it +1/p)\}$ are equivalent. 
\begin{enumerate}
\item 
There exists a faithful weight $\omega$ on $M$ such that $\omega^{-it}\xi(t) \in L^p(M)$ is norm-continuous 
in $t \in \R$. 
\item 
There exists a faithful weight $\omega$ on $M$ such that $\xi(t)\omega^{-it} \in L^p(M)$ is norm-continuous 
in $t \in \R$. 
\item 
For any faithful weight $\omega$ on $M$, $\omega^{-it}\xi(t) \in L^p(M)$ is norm-continuous 
in $t \in \R$. 
\item 
For any faithful weight $\omega$ on $M$, $\xi(t)\omega^{-it} \in L^p(M)$ is norm-continuous 
in $t \in \R$. 
\item 
For any $\phi \in M_*^+$, $\phi^{-it} \xi(t) \in L^p(M)$ is norm-continuous in $t \in \R$. 
\item 
For any $\phi \in M_*^+$, $\xi(t)\phi^{-it} \in L^p(M)$ is norm-continuous in $t \in \R$. 
\end{enumerate}
We say that a section $\{ \xi(t)\}$ is \textbf{norm-continuous}\index{norm-continuity} 
if it satisfies any of these equivalent conditions. 
Notice here that $\xi^*(t) = \xi(-t)^*$ is norm-continuous if so is $\xi(t)$. 
\end{Lemma}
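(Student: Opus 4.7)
The plan is to mirror the structure of Lemma~\ref{WC}, replacing the w*-topology on $M$ by the norm topology on $L^p(M)$. The engine of the proof is the initial observation of Section~5, namely that $t\mapsto \varphi^{it}\eta\psi^{-it}$ is norm-continuous as an $L^p(M)$-valued function whenever $\eta\in L^p(M)$ (for $p=1,2$), together with the fact that every one-parameter modular group $\{\omega^{it}\}$, acting on $L^2(M)$, is a strongly continuous unitary group by Stone's theorem.

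First, for the equivalences (i)$\Leftrightarrow$(iii) and (ii)$\Leftrightarrow$(iv), I would compare two faithful weights $\omega,\omega_0$: the unitary $u(t):=\omega^{-it}\omega_0^{it}\in M$ is a one-parameter unitary group that is s-continuous as an $M$-valued function (being a product of two such), and the identity $\omega^{-it}\xi(t)=u(t)(\omega_0^{-it}\xi(t))$, together with the joint continuity of multiplication $M\times L^p(M)\to L^p(M)$ when the $M$-factor carries the s-topology and the $L^p$-factor carries the norm (on bounded sets), transfers norm-continuity from one reference weight to the other. The equivalence (i)$\Leftrightarrow$(ii) is handled by writing $\xi(t)\omega^{-it}=\omega^{it}\eta(t)\omega^{-it}$ with $\eta(t):=\omega^{-it}\xi(t)$ norm-continuous, and applying the initial observation combined with a triangle-inequality argument to accommodate the varying $\eta(t)$. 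The forward implications (iii)$\Rightarrow$(v) and (iv)$\Rightarrow$(vi) follow by extending any $\phi\in M_*^+$ to a faithful weight $\omega=\phi+\tilde\phi$, where $\tilde\phi$ is faithful on $(1-[\phi])M(1-[\phi])$; the orthogonal-support decomposition $\omega^{-it}=\phi^{-it}+\tilde\phi^{-it}$ yields $\phi^{-it}\xi(t)=[\phi]\omega^{-it}\xi(t)$, which inherits norm-continuity from (iii).

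The main obstacle is the reverse implication (v)$\Rightarrow$(iii). For a faithful weight $\omega=\sum_j\omega_j$ one has the orthogonal decomposition $\omega^{-it}\xi(t)=\sum_j\omega_j^{-it}\xi(t)$ in $L^p(M)$, each finite partial sum $\phi_F^{-it}\xi(t)$ (with $\phi_F=\sum_{j\in F}\omega_j\in M_*^+$) being norm-continuous by (v); the task is to establish uniform convergence of the partial sums on compact intervals. The tail satisfies $\|\omega^{-it}\xi(t)-\phi_F^{-it}\xi(t)\|^p=\|\xi(t)\|^p-\|\phi_F^{-it}\xi(t)\|^p$, so uniform smallness amounts to some form of local boundedness (or, better, continuity) of $\|\xi(t)\|$. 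I expect to extract this by applying a Dini-type argument to the monotone family of continuous functions $\|\phi_F^{-it}\xi(t)\|^p\uparrow\|\xi(t)\|^p$, supplemented by a direct bootstrap that derives local boundedness of $\|\xi(t)\|$ from (v) applied to support projections chosen adapted to compact intervals. Finally, the postscript that $\xi^*(t)$ is norm-continuous whenever $\xi(t)$ is follows immediately from the isometry of the $*$-operation applied to condition~(i): $(\omega^{-it}\xi(t))^*=\xi^*(-t)\omega^{it}$ is norm-continuous in $t$, which after the reflection $t\mapsto -t$ reads as condition~(ii) for $\xi^*$, equivalent to (i) for $\xi^*$ by the symmetric equivalence already established.
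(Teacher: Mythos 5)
Your overall strategy (reduce everything to the opening observation of the section on sectional continuity, compare reference weights through the s*-continuous unitary cocycle $\omega^{-it}\omega_0^{it}$, pass between left and right via $\xi(t)\omega^{-it}=\omega^{it}(\omega^{-it}\xi(t))\omega^{-it}$, and go from faithful weights down to $\phi\in M_*^+$ by $\phi^{-it}\xi(t)=[\phi]\,\omega^{-it}\xi(t)$ with $\omega=\phi+\tilde\phi$) is exactly the route the paper intends — it omits the proof, calling these facts ``more or less straightforward from this observation.'' Those implications are fine (modulo the slip that $\omega^{-it}\omega_0^{it}$ is a cocycle, not a group, and that for $p=1$ one must route the joint-continuity estimates through the factorization $\eta=\xi\zeta$ with $\xi,\zeta\in L^2(M)$, since $M\times M_*\to M_*$ is handled by $\|(a-a_0)\xi\|_2\,\|\zeta\|_2$). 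The genuine gap is in (v)$\Rightarrow$(iii). First, the tail identity $\|\omega^{-it}\xi(t)-\phi_F^{-it}\xi(t)\|^p=\|\xi(t)\|^p-\|\phi_F^{-it}\xi(t)\|^p$ is simply false for $p=1$ (the trace norm is not additive over a decomposition $\eta=e\eta+(1-e)\eta$). Second, and more seriously, Dini's theorem requires the pointwise limit $\|\xi(t)\|^p$ to be \emph{continuous}; local boundedness is not enough (think of $\min(1,n|t|)\uparrow 1_{t\neq 0}$). As an increasing limit of continuous functions, $\|\xi(t)\|$ is automatically lower semicontinuous; the missing upper semicontinuity is essentially equivalent to the statement you are trying to prove, so your plan as written is circular at its crux.

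The repair is to argue sequentially rather than uniformly. Continuity at $t_0$ is equivalent to convergence along every sequence $t_n\to t_0$. Given such a sequence, the countably many vectors $\eta(t_n)=\omega^{-it_n}\xi(t_n)$ and $\eta(t_0)$ all have countably decomposable left supports, so one can choose a single $\phi\in M_*^+$ with $[\phi]\ge \sigma^\omega_{t_n}\bigl(l(\eta(t_n))\bigr)$ for all $n\ge 0$, i.e.\ $\sigma^\omega_{-t_n}([\phi])\eta(t_n)=\eta(t_n)$. Writing $\phi^{-it}\xi(t)=v_\phi(t)\eta(t)$ with the s*-continuous partial isometries $v_\phi(t)=\phi^{-it}\omega^{it}$, whose initial projections are $\sigma^\omega_{-t}([\phi])$, condition (v) together with s*-continuity of $v_\phi$ gives $\|v_\phi(t_n)(\eta(t_n)-\eta(t_0))\|\to 0$, and since $v_\phi(t_n)$ is isometric on the range of $\sigma^\omega_{-t_n}([\phi])$ this norm equals $\|\eta(t_n)-\sigma^\omega_{-t_n}([\phi])\eta(t_0)\|$, whose second term tends to $\eta(t_0)$; hence $\eta(t_n)\to\eta(t_0)$. (For $p=1$ run the same argument after factoring through $L^2$.) Your closing remark on $\xi^*(t)$ is correct as stated.
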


\begin{Definition}
A section $\{ x(t)\}$ of $\{ M(it)\}_{t \in \R}$ is said to be \textbf{s*-continuous} if 
$x(t)\xi$ and $\xi x(t)$ are norm-continuous for any $\xi \in L^2(M)$. 
Notice that $x^*(t)$ is s*-continuous if and only if so is $x(t)$ in view of 
$x^*(t)\xi = (\xi^* x(-t))^*$ and $\xi x^*(t) = (x(-t)\xi^*)^*$. 
Clearly s*-continuous sections are w*-continuous. 

To control the norm of a w*-continuous section $x = \{ x(t) \in M(it) \}$, two norms 
are introduced by 
\[ 
\| x\|_\infty = \sup\{ \| x(t)\|; t \in \R\}, 
\quad 
\| x\|_1 = \int_\R \| x(t)\|\, dt
\]
and $x(t)$ is said to be \textbf{bounded} if $\| x\|_\infty < \infty$ and 
\textbf{integrable}\index{integrable section} if $\| x\|_1 < \infty$. 
Note here that $\| x(t)\|$ is locally bounded and lower-semicontinuous. 
\end{Definition}

\begin{Lemma}\label{2-continuity} 
The following conditions on a section $\{ x(t)\}$ of $\{ M(it)\}$ are equivalent. 
\begin{enumerate}
\item 
For any $\xi \in L^2(M)$, $\{ x(t)\xi \}$ is a norm-continuous section of $\{ M(it+1/2)\}$. 
\item 
For any $\varphi \in M_*^+$ and any $\xi \in L^2(M)$, 
$x(t) \varphi^{-it}\xi \in L^2(M)$ is norm-continuous in $t \in \R$. 
\item 
The norm function $\| x(t)\|$ is locally bounded and, 
for a sufficiently large $\phi \in M_*^+$, 
$x(t) \phi^{-it + 1/2} \in L^2(M)$ is norm-continuous in $t \in \R$, i.e.,  
given any $\varphi \in M_*^+$, we can find $\phi \in M_*^+$ such that 
$\varphi \leq \phi$ 
and $x(t) \phi^{-it + 1/2} \in L^2(M)$ is norm-continuous in $t \in \R$. 
\end{enumerate}
\end{Lemma}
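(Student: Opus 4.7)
The plan is to prove all three conditions equivalent by reducing each to a common condition on the normalised family $a(t):=\omega^{-it}x(t)\in M$ (for a faithful weight $\omega$): that $a(t)$ is locally norm-bounded and strongly continuous on $L^2(M)$. This common condition is independent of the choice of faithful $\omega$, so for each of the three conditions I would adapt $\omega$ to the data at hand. Lemma~\ref{NC} is the workhorse throughout, letting one shift modular factors $\omega^{\pm it}$ between the two sides so that norm-continuity of sections of $\{M(it+1/2)\}$ becomes ordinary $L^2$-norm continuity in $L^2(M)$.

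For the equivalence with (i): Lemma~\ref{NC} directly says (i) is the same as $L^2$-norm continuity of $\omega^{-it}x(t)\xi = a(t)\xi$ for every $\xi\in L^2(M)$; the uniform boundedness principle on the Hilbert space $L^2(M)$ then yields local boundedness $\sup_{t\in K}\|a(t)\|=\sup_{t\in K}\|x(t)\|<\infty$ on each compact $K\subset\R$.

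For the equivalence with (ii): I would choose $\omega$ containing the given $\varphi$ as a direct summand, so that $\omega^{it}\varphi^{-it}=[\varphi]$ and
\[
x(t)\varphi^{-it}\xi \;=\; \omega^{it}a(t)\varphi^{-it}\xi \;=\; \sigma^\omega_t(a(t))\cdot[\varphi]\xi.
\]
As $(\varphi,\xi)$ varies the vector $[\varphi]\xi$ exhausts $L^2(M)$ (any $\eta\in L^2(M)$ has $\sigma$-finite left support, realisable as $[\varphi]$ for some $\varphi\in M_*^+$), so (ii) is equivalent to strong continuity of the conjugated family $\sigma^\omega_t(a(t))$. A routine triangle-inequality argument using the s-continuity of the modular unitary group $\Delta_\omega^{it}$ on $L^2(M)$ and local boundedness shows this is in turn equivalent to strong continuity of $a(t)$ itself.

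For the equivalence with (iii): the continuity clause, rewritten using $\omega=\phi+\omega'$ (with $[\phi][\omega']=0$) and the identity $\omega^{it}\phi^{-it}=[\phi]$ together with Lemma~\ref{NC}, becomes $L^2$-norm continuity of $a(t)\phi^{1/2}$. Right multiplication by any $b\in M$ preserves this, so $a(t)\eta$ is norm-continuous for $\eta$ in the dense subspace $\phi^{1/2}M\subset\overline{\phi^{1/2}M}=[\phi]L^2(M)$; local boundedness of $\|a(t)\|$ upgrades this to strong continuity on all of $[\phi]L^2(M)$, uniformly on compact $t$-intervals. Since the ``sufficiently large'' clause in (iii) permits $[\phi]$ to dominate the left support of any prescribed vector, this recovers strong continuity of $a(t)$ on all of $L^2(M)$. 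The main obstacle is precisely this density-plus-uniform-limit step, which requires identifying $\overline{\phi^{1/2}M}$ with $[\phi]L^2(M)$ in the standard form and threading the $\omega$-normalisations carefully; the remaining implications are essentially immediate from Lemma~\ref{NC} and the uniform boundedness principle.
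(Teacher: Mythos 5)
Your overall reduction---normalising to $a(t)=\omega^{-it}x(t)$ (equivalently $b(t)=x(t)\omega^{-it}=\sigma^\omega_t(a(t))$) and showing each condition amounts to local boundedness plus strong continuity of this family on $L^2(M)$---is the natural route here, and the treatments of (i) and (iii) are essentially fine: for (i) the uniform boundedness principle legitimately applies to the single family $\{a(t)\}$, and in (iii) local boundedness is part of the hypothesis, so the density argument from $\phi^{1/2}M$ up to $[\phi]L^2(M)=\overline{\phi^{1/2}M}$ and the change of reference weight both go through.

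The genuine gap is in (ii): you invoke ``local boundedness'' in the triangle-inequality step, but under hypothesis (ii) alone you have not produced it, and your only stated source of local boundedness (UBP on $L^2(M)$) is tied to condition (i). Condition (ii) hands you norm-continuity of $x(t)\varphi^{-it}\xi=b_{\omega_\varphi}(t)[\varphi]\xi$ where the reference weight $\omega_\varphi\supseteq\varphi$ varies with $\varphi$; a direct UBP application for fixed $\varphi$ only bounds $\sup_{t\in K}\|x(t)\varphi^{-it}\|=\sup_{t\in K}\|x(t)[\varphi]\|$, and the supremum of these over $\varphi$ need not be locally bounded. Without local boundedness you can neither pass between the families $b_{\omega}(t)$ and $b_{\omega'}(t)=b_\omega(t)(\omega^{it}\omega'^{-it})$ for different faithful weights, nor sum the orthogonal pieces $x(t)\omega_j^{-it}\eta$ to get continuity of $b_\omega(t)\eta$ for a single fixed faithful $\omega=\sum_j\omega_j$, so the asserted equivalence of (ii) with strong continuity of one fixed conjugated family is circular as written. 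The repair is available in the paper: (ii) implies condition (vi) of Lemma~\ref{WC} (strong continuity of $x(t)\phi^{-it}\xi$ for all $\xi$ gives w*-continuity of $x(t)\phi^{-it}$ for every $\phi\in M_*^+$), and the ``Moreover'' clause of Lemma~\ref{WC} then supplies the local boundedness of $\|x(t)\|$ that the rest of your argument needs. With that citation inserted, the proof closes.
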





\begin{Corollary}
A section $x(t) \in M(it)$ is s*-continuous if and only if $\| x(t)\|$ is locally bounded 
and $L^2(M)$-valued functions 
$x(t) \phi^{-it+1/2}$, $\phi^{-it+1/2}x(t)$ are norm-continuous for a sufficiently large $\phi \in M_*^+$. 
\end{Corollary}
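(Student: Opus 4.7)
The plan is to reduce the Corollary to a two-sided application of Lemma \ref{2-continuity} using that the section-$*$ operation $\{y(t)\}\mapsto\{y(-t)^*\}$ preserves norm-continuity (Lemma \ref{NC}). By definition, $x$ is s*-continuous iff $x(t)\xi$ and $\xi x(t)$ are norm-continuous sections for every $\xi\in L^2(M)$. The section-$*$ sends the family $\{\xi x(t)\}$ to $\{(\xi x(-t))^*\}=\{x^*(t)\xi^*\}$, so the left-side continuity is equivalent to $x^*(t)\eta$ being norm-continuous for every $\eta\in L^2(M)$. Thus s*-continuity is exactly condition (i) of Lemma \ref{2-continuity} holding simultaneously for $x$ and for $x^*$.

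I would then invoke the equivalence (i)$\Leftrightarrow$(iii) of Lemma \ref{2-continuity} twice. Applied to $x$ it yields local boundedness of $\|x(t)\|$ together with norm-continuity of $x(t)\phi^{-it+1/2}$ for sufficiently large $\phi$. Applied to $x^*$ it yields the same boundedness condition (since $\|x^*(t)\|=\|x(-t)\|$) together with norm-continuity of $x^*(t)\phi^{-it+1/2}$ for sufficiently large $\phi$. Applying the section-$*$ once more to the latter gives
\[
\bigl(x^*(\cdot)\phi^{-i\cdot+1/2}\bigr)^*(t)=\bigl(x^*(-t)\phi^{it+1/2}\bigr)^*=\phi^{-it+1/2}x(t),
\]
so norm-continuity of $x^*(t)\phi^{-it+1/2}$ is equivalent to norm-continuity of $\phi^{-it+1/2}x(t)$, which is the remaining continuity demanded by the Corollary.

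To collapse the two ``sufficiently large $\phi$'' quantifiers into a single $\phi$, I would appeal to condition (ii) of Lemma \ref{2-continuity}: once (i) holds for $x$, the function $x(t)\varphi^{-it}\eta$ is norm-continuous for every $\varphi\in M_*^+$ and every $\eta\in L^2(M)$, and specializing $\varphi=\phi$ and $\eta=\phi^{1/2}$ gives $x(t)\phi^{-it+1/2}$ norm-continuous for any $\phi$, not merely for sufficiently large ones; the mirror argument applied to $x^*$ gives the same for $\phi^{-it+1/2}x(t)$. Hence both conditions of the Corollary may be realized with any chosen $\phi$. The only real obstacle is the bookkeeping surrounding the $t\leftrightarrow -t$ swap effected by the section-$*$; once that is tracked cleanly, the Corollary is an immediate consequence of Lemma \ref{2-continuity} and requires no further analytic input.
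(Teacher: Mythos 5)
Your argument is correct and is exactly the derivation the paper intends (it gives no separate proof, treating the Corollary as immediate from Lemma~\ref{2-continuity}): s*-continuity is condition (i) of that lemma for $x$ and for $x^*$ simultaneously, the section-$*$ converts the $x^*$ statements into the right-multiplication statements, and the passage through condition (ii) legitimately merges the two ``sufficiently large $\phi$'' quantifiers. The bookkeeping $\bigl(x^*(-t)\phi^{it+1/2}\bigr)^*=\phi^{-it+1/2}x(t)$ and $\|x^*(t)\|=\|x(-t)\|$ is exactly what is needed, so nothing is missing.
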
 

A section $\{ x(t) \in M(it)\}_{t \in \R}$ 
is said to be \textbf{finitely supported} if we can find $\phi \in M_*^+$ so that
$x(t) = [\phi]x(t)[\phi]$ for every $t \in \R$. We say that $\{ x(t)\}$ is locally bounded (bounded) if 
so is the function $\| x(t)\|$ of $t$.

\section{Convolution Algebra}
Consider a bounded, s*-continuous and integrable section $\{ f(t) \in M(it)\}$ 
and identify it 
with a formal expression like $\displaystyle \int_\R f(t)\, dt$, which is compatible with the *-operation by 
\[ 
\left( \int_\R f(t)\, dt \right)^* = \int_\R f(t)^*\, dt = \int_\R f(-t)^*\, dt = \int_\R f^*(t)\, dt. 
\] 
Moreover, a formal rewriting 
\[
\int_\R f(s) \, ds \int_\R g(t) \, dt 
= \int_\R \left(\int_\R f(s) g(t-s)\, ds\right)\, dt 
\]
suggests to define a product of $f$ and $g$ by 
\[ 
(fg)(t) = \int_\R f(s) g(t-s)\, ds = \int_\R f(t-s) g(s)\, ds.  
\]
It is then a routine work to check that the totality of such sections constitutes 
a normed *-algebra in such a way that 
\[ 
\| fg\|_\infty \leq (\| f\|_1 \| g\|_\infty) \wedge (\| f\|_\infty \| g\|_1), 
\quad 
\| fg\|_1 \leq \| f\|_1\, \| g\|_1. 
\] 

We notice that 
the scaling automorphism $\theta_s$ on $\{ M(it)\}$ induces a *-automorphic action on the *-algebra 
of sections by $(\theta_sf)(t) = e^{-ist} f(t)$.

Here we shall apply formal arguments to illustrate how tracial functionals can be associated 
to this kind of *-algebras. 

\bigskip
\noindent
\textbf{Formal manipulation is an easy business}: 
Imagine that a section $f(t)$ has an analytic extension to the region 
$-1 \leq \Im z \leq 0$ in some sense so that $f^*(z) = f(-\overline{z})^*$ and define a linear functional by 
\[ 
\tau\left( \int_\R f(t) \, dt \right) = \langle f(-i) \rangle. 
\] 
Note that $f(-i)$ in the right hand side belongs to $M(1) = M_*$. We then have 
\begin{align*} 
\tau(f^*f) &= \int_\R \langle f^*(s) f(-i-s) \rangle\, ds
= \int_\R \langle f^*(s -i/2) f(-s -i/2) \rangle\, ds\\
&= \int_\R \langle f(-s -i/2)^* f(-s -i/2) \rangle\, ds \geq 0, 
\end{align*}
where Cauchy's integral theorem is formally used in the first line. The trace property is seen from 
\begin{align*}
\tau(fg) &= \int \langle f(s) g(-i-s)\rangle \, ds
= \int \langle f(t-i) g(-t) \rangle \,dt\\
&= \int \langle g(-t) f(t-i) \rangle\, dt = \int \langle g(t) f(-t-i) \rangle\, dt 
= \tau(gf), 
\end{align*} 
where Cauchy's integral theorem is again used formally in the first line. 

\bigskip
Going back to the sane track,  
it turns out that it is not easy 
to make all of the above formal arguments rigorous at least 
in a reference-weight-free fashion. 
Instead we shall construct a Hilbert algebra as a halfway business in what follows, 
which is enough to extract the tracial functional.

\section{Hilbert Algebras} 
\begin{Definition}
A section $\{ f(t) \in M(it) \}$ is said to be \textbf{half-analytic} if, for a sufficiently 
large $\phi \in M_*^+$, 
the function $f_\phi(t',t'') = \phi^{-it'} f(t' + t'') \phi^{-it''}$ of $(t',t'') \in \R^2$ 
is analytically extended 
to a bounded $M$-valued s*-continuous 
function $f_\phi(z',z'') = \phi^{-iz'} f(z' + z'') \phi^{-iz''}$ of $(z',z'') \in T_{[0,1/2]}$. 
\end{Definition}

Note here that sufficient largeness in the condition has a meaning: 
For a $\phi$ majorized by $\omega \in M_*^+$, 
$\phi^{it}\omega^{-it}$  is analytically extended to a s*-continuous function
$\phi^{iz}\omega^{-iz}$ of $z \in \R -i[0,1/2]$ (Corollary~\ref{majorization2}) and therefore 
$\omega^{-it'} f(t'+t'') \omega^{-it''}$ has an analytic extension of the form 
$(\omega^{-iz'}\phi^{iz'})(\phi^{-iz'}f(z'+z'')\phi^{-iz''}) (\phi^{iz''} \omega^{-iz''})$,  
which is s*-continuous as a product of s*-continuous locally 
bounded operator-valued functions. 

Note also that the s*-continuity of $\phi^{-iz'}f(z'+z'')\phi^{-iz''}$ 
is equivalent to the norm-continuity of $L^2$-valued functions 
$(\phi^{-iz'}f(z'+z'')\phi^{-iz''})\phi^{1/2}$ and $\phi^{1/2}(\phi^{-iz'}f(z'+z'')\phi^{-iz''})$ 
(Lemma~\ref{2-continuity}). 
These are analytic extensions of 
$\phi^{-it'}f(t'+t'') \phi^{1/2 -it''}$ and $\phi^{1/2-it'}f(t'+t'')\phi^{-it''}$, 
whence simply 
denoted by $\phi^{-iz'}f(z'+z'') \phi^{1/2 -iz''}$ and $\phi^{1/2-iz'}f(z'+z'')\phi^{-iz''}$  respectively.  

\medskip
\noindent
\textbf{Warning}: No separate meaning of $f(z)$ is assigned here. 

\medskip

It is immediate to see that $f(t)$ is half-analytic if and only if so is 
$f^*(t) = f(-t)^*$ in such a way that  
\begin{equation} 
\phi^{-iz'}f^*(z'+z'') \phi^{-iz''} 
= \bigl(\phi^{i\overline{z''}} f(-\overline{z''}-\overline{z'}) \phi^{i\overline{z'}}\bigr)^*. 
\end{equation}


To get the convolution product in a manageable way, we impose the following decaying condition. 
For a half-analytic section $f(t) \in M(it)$, 
the obvious identity $f_\phi(z'+s',z''+s'') = \phi^{-is'} f_\phi(z',z'') \phi^{-is''}$ shows that 
$\| f_\phi(z',z'')\|$ depends only on $r' = -\Im z'$, $r'' = - \Im z''$ and $t=\Re(z'+z'')$, 
which enables us to introduce 
\[ 
|f|_\phi(t) = \sup\{ \| f_\phi(z',z'')\|; r' \geq 0, r'' \geq 0, r' + r'' \leq 1/2\}.  
\]

A half-analytic section $f(t)$ is said to be of \textbf{Gaussian decay} if, 
for a sufficiently large $\phi \in M_*^+$, we can find $\delta>0$ so that $|f|_\phi(t) = O(e^{-\delta t^2})$.  

Now let $\sN$ be the vector space of half-analytic sections of Gaussian decay, 
which is closed under taking the *-operation by (1). 
It is immediate to see that the scaling automorphisms leave $\sN$ invariant so that 
$\phi^{-iz'}(\theta_sf)(z'+z'')\phi^{-iz''} = e^{-is(z'+z'')} \phi^{-iz'}f(z'+z'')\phi^{-iz''}$. 

Let $f, g \in \sN$. Thanks to the Gaussian decay assumption, 
the convolution product $fg$ has a meaning and $(fg)(t)$ is an s*-continuous section. 
To see $fg \in \sN$, we therefore need to check that it admits 
a half-analytic extension of Gaussian decay. 



Choose an auxiliary weight $\omega$ which supports both $f$ and $g$. Then 
\begin{multline*} 
\phi^{-iz'}((fg)(z'+z'')\phi^{-iz''})\\ 
= \int_\R \phi^{-iz'}f(z'+s) \omega^{-is} \sigma^\omega_s(g(z''-s) \phi^{-i(z''-s)}) 
\omega^{is} \phi^{-is} \, ds 
\end{multline*}
gives the s*-continuous analytic extension with its norm estimated by
\begin{multline*}
\| \phi^{-iz'}(fg)(z'+z'') \phi^{-iz''}\|\\ 
\leq \int_\R |f|_\phi(\Re z' +s)\, |g|_\phi(\Re z'' - s)\, ds
= O(e^{-\epsilon\delta t^2/(\epsilon + \delta)}) 
\end{multline*}
for $t = \Re(z'+z'')$ if $|f|_\phi(t) = O(e^{-\epsilon t^2})$ and $|g|_\phi(t) = O(e^{-\delta t^2})$. 




So far $\sN$ is shown to be a *-algebra with an automorphic action of $\R$ by scaling automorphisms.
We next introduce an inner product which makes $\sN$ into a Hilbert algebra. 

\begin{Lemma}
The following identity holds for $f \in \sN$ and sufficiently large $\phi, \varphi \in M_*^+$. 
\[ 
[\varphi]\bigl(f(t-i/2)\phi^{-it - 1/2}\bigr) \phi^{it + 1/2} 
= \varphi^{it + 1/2} \bigl(\varphi^{-it - 1/2} f(t - i/2)\bigr) [\phi]
\] 
(the left hand side is therefore depends only on $[\varphi]$ while the right hand side 
depends only on $[\phi]$ and the common element in $M(it + 1/2)$ is reasonably denoted by 
$[\varphi]f(t-i/2)[\phi]$). 
\end{Lemma}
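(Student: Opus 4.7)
The plan is to reduce the two-weight identity to a single-weight version for a common dominating weight, and then prove the latter via a two-variable analytic continuation. Both sides simplify formally to $[\varphi]f(t-i/2)[\phi]$ (using $\phi^{-it-1/2}\phi^{it+1/2} = [\phi]$ on the LHS and $\varphi^{it+1/2}\varphi^{-it-1/2} = [\varphi]$ on the RHS); the task is to realize this as an equality of actual elements of $M(it+1/2)$.

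First I would pick an auxiliary $\omega \in M_*^+$ dominating both $\phi$ and $\varphi$ and large enough to realize the half-analyticity of $f$, e.g.~$\omega = \phi + \varphi + \phi_0$ with $\phi_0$ any half-analyticity witness of $f$. By Corollary~\ref{majorization2}, bounded $M$-valued s*-continuous analytic extensions $\phi^{iz}\omega^{-iz}$ and $\varphi^{iz}\omega^{-iz}$ are available on $\R - i[0, 1/2]$, producing contractions $c_\phi, c_\varphi \in M$ at $z = -i/2$.

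The central step is the single-weight identity $f_\omega(0, t-i/2)\omega^{it+1/2} = \omega^{it+1/2}f_\omega(t-i/2, 0)$ in $M(it+1/2)$. Right-multiplying both sides by $\omega^{-it}$ (harmless thanks to the right supports being in $[\omega]$) and combining $\omega^{it}a\omega^{-it} = \sigma^\omega_t(a)$ with the cocycle $\sigma^\omega_t(f_\omega(t-i/2,0)) = f_\omega(-i/2, t)$ reduces it to the $L^2$-identity $f_\omega(0, t-i/2)\omega^{1/2} = \omega^{1/2}f_\omega(-i/2, t)$. To prove it, introduce the two-variable $L^2$-valued function
\[
K(z, z') := \omega^{iz'}\,f_\omega(z, t-i/2-z)\,\omega^{-iz'+1/2}
\]
on $\Im z, \Im z' \in [-1/2, 0]$; this is analytic in $z$ by the half-analyticity of $f$ and analytic in $z'$ by the KMS-type extension from Lemma~\ref{ME}. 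Along the diagonal $z = z'$, substituting the cocycle $f_\omega(s, t-i/2-s) = \omega^{-is}f_\omega(0, t-i/2)\omega^{is}$ (valid for real $s$ by direct modular algebra manipulation) and simplifying via $\omega^{is}\omega^{-is} = [\omega]$ together with the left support $[\omega]f_\omega(0, t-i/2) = f_\omega(0, t-i/2)$, one obtains $K(s, s) = f_\omega(0, t-i/2)\omega^{1/2}$; by uniqueness of analytic continuation this extends to complex $s$ in the strip. Evaluating $K(0, 0) = f_\omega(0, t-i/2)\omega^{1/2}$ and $K(-i/2, -i/2) = \omega^{1/2}f_\omega(-i/2, t)$ then yields the claimed $L^2$-identity.

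For the original two-weight identity, I would rewrite both sides in $\omega$-form using $\phi^{1/2} = c_\phi\omega^{1/2}$ combined with the analytic extension of $\phi^{it}\omega^{-it}$ (and the $\varphi$-analogue), obtaining $[\varphi]f_\omega(0, t-i/2)\omega^{it+1/2}[\phi]$ on one side and $[\varphi]\omega^{it+1/2}f_\omega(t-i/2, 0)[\phi]$ on the other; applying the single-weight identity sandwiched between the idempotents $[\varphi]$ and $[\phi]$ then closes the argument. The main obstacle is precisely this reduction: coordinating the contractions $c_\phi, c_\varphi$, the mixed cocycles $\phi^{it}\omega^{-it}$, and the boundary values of the various half-analytic extensions so that the rewrites remain valid at the level of $M(it+1/2)$.
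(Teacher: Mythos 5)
Your argument is sound but proceeds along a genuinely different route from the paper. The paper never reduces to a single weight: it tests the asserted identity in $M(it+1/2)$ against the total family of vectors $\phi^{1/2}a\varphi^{-it}$ ($a\in M$), starts from the scalar identity $\langle (f(t)\phi^{-it})\phi^{1/2}\sigma^{\phi,\varphi}_t(a)\varphi^{1/2}\rangle = \langle \varphi^{1/2}(\varphi^{-it}f(t))\phi^{1/2}a\rangle$, and continues it analytically from $t$ to $t-i/2$ using the KMS condition for the \emph{relative} modular group $\sigma^{\phi,\varphi}_t$ at $\sigma^{\phi,\varphi}_t(a)\varphi^{1/2}$ --- so the two weights are handled simultaneously and the duality argument replaces your two-variable function $K(z,z')$. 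Your diagonal-constancy argument for the single-weight identity is correct: $K$ is separately analytic (half-analyticity of $f$ in $z$, the KMS extension in $z'$), uniformly bounded, jointly continuous up to the boundary, and constant on the real diagonal by the cocycle relation, so evaluating at $\zeta=-i/2$ gives $f_\omega(0,t-i/2)\omega^{1/2}=\omega^{1/2}f_\omega(-i/2,t)$ as you claim. What your route costs is exactly the step you flag: passing from the $\omega$-form back to the $(\varphi,\phi)$-form needs the chain rules $(\phi^{iz}\omega^{-iz})\omega^{iz}=\phi^{iz}$ and $\omega^{iz}(\omega^{-iz}\varphi^{iz})=\varphi^{iz}$ at $z=t-i/2$, i.e.\ $(\phi^{it+1/2}\omega^{-it-1/2})\omega^{it+1/2}=\phi^{it+1/2}$ in $M(it+1/2)$; these follow from Corollary~\ref{majorization2}(iii) translated by $\omega^{it}$ together with uniqueness of analytic continuation, so they are routine rather than a missing idea, but they do have to be written out. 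The paper's pairing argument buys freedom from this bookkeeping at the price of a density check for the test vectors; yours buys a direct, vector-valued identity at the price of the weight-change calculus.
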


\begin{proof}
For $a \in M$, the identity
\[
\langle (f(t) \phi^{-it}) \phi^{1/2} \sigma^{\phi,\varphi}_t(a) \varphi^{1/2} \rangle 
= \langle \varphi^{1/2}(\varphi^{-it} f(t)) \phi^{1/2} a \rangle 
\] 
is analytically continued from $t$ to $t - i/2$ to get 
\[ 
\langle (f(t-i/2) \phi^{-it-1/2}) \phi^{1/2} \phi^{1/2} \sigma^{\phi,\varphi}_t(a) \rangle 
= \langle \varphi^{1/2} (\varphi^{-it-1/2} f(t-i/2)) \phi^{1/2} a \rangle  
\] 
(use the KMS-condition at $\sigma^{\phi,\varphi}_t(a)\varphi^{1/2}$) and, after a simple rewriting, 
\[ 
\langle (f(t-i/2) \phi^{-it-1/2}) \phi^{it+1/2} \phi^{1/2}a\varphi^{-it} \rangle 
= \langle \varphi^{it +1/2} (\varphi^{-it-1/2} f(t-i/2)) \phi^{1/2} a \varphi^{-it} \rangle.   
\] 
\end{proof}

Since $[\varphi] f(t-i/2) [\phi] = [\varphi] ([\varphi'] f(t-i/2) [\phi']) [\phi]$
whenever $[\varphi] \leq [\varphi']$ and $[\phi] \leq [\phi']$, 
$\| [\varphi] f(t-i/2) [\phi]$ is increasing in $[\varphi]$ and $[\phi]$. 
We claim that 
\[ 
f(t-i/2) = \lim_{\substack{[\varphi] \to 1\\ [\phi] \to 1}} [\varphi] f(t-i/2) [\phi]
\] 
exists in $M(it+1/2)$. 
In fact, if not, we can find increasing sequences $\varphi_n$ and $\phi_n$ in $M_*^+$ 
so that $\displaystyle \lim_{n \to \infty} \| [\varphi_n] f(t-i/2) [\phi_n]\| = \infty$, which 
contradicts with 
\[ 
\| [\varphi_n] f(t-i/2) [\phi_n]\| \leq \| [\varphi] f(t-i/2) [\phi]\| < \infty
\] 
for the choice $\varphi= \sum \varphi_n/2^n\| \varphi_n\|$, 
$\phi= \sum \phi_n/2^n\| \phi_n\|$. 

Moreover, the same reasoning reveals that we can find $\varphi, \phi \in M_*^+$ so that 
$f(t-i/2) = [\varphi] f(t-i/2) = f(t-i/2)[\phi]$. 
Consequently, $\{ f(t-i/2) \in M(it + 1/2) \}$ is a norm-continuous section of Gaussian decay from 
the expression 
\[ 
f(t-i/2) = f(t-i/2)[\phi] = \bigl(f(t-i/2)\phi^{-it-1/2}\bigr) \phi^{it+1/2} 
\]
which is valid for a sufficiently large $\phi$.

\begin{Remark}
By an analytic continuation, 
one sees that any half-analytic section $\{ f(t)\}$ of $\{ M(it)\}$ 
is finitely supported in the sense that there exists $\phi \in M_*^+$ satisfying 
$f(t) = [\phi] f(t) [\phi]$ for every $t \in \R$. 
\end{Remark} 

\begin{Example}
Let $\varphi \in M_*^+$ and $a, b \in [\varphi]M[\varphi]$ be entirely analytic for $\sigma^\varphi_t$. 
Then, for $\alpha>0$ and $\beta \in \C$, 
$f(t) = e^{-\alpha t^2 + \beta t} a \varphi^{it}b$ belongs to $\sN$ and its boundary section is 
$f(t-i/2) = e^{-\alpha(t-i/2)^2 + \beta(t-i/2)} a \varphi^{it + 1/2} b$. 
\end{Example} 

The inner product is now introduced by 
\[ 
(f|g) = \int_\R (f(t-i/2) | g(t-i/2))\, dt 
= \int_\R \langle f(t-i/2)^* g(t-i/2) \rangle\, dt,  
\] 
which is clearly positive-definite and 
the completed Hilbert space $\sH$ is naturally identified with the direct integral 
\[ 
\sH = \oint_\R M(it + 1/2)\, dt 
\] 
because $\sN$ provides a dense set of measurable sections in the right hand side. 
The Hilbert space $\sH$ is then made into a *-bimodule of $M(i\R)$ by 
\[ 
a\omega^{is} \oint_\R \xi(t)\, dt = 
\oint_\R a\omega^{is} \xi(t-s)\, dt   
\] 
and 
\[ 
\left( \oint \xi(t)\, dt \right)^*  = \oint_\R \xi(-t)^*\, dt 
\] 
in such a way that actions of $M(it)$ on $\sH$ are s*-continuous. 

Since the family $\{ M(it+1/2)\}$ is trivialized by obvious isomorphisms 
$L^2(M) \omega^{it} \cong L^2(M) \cong \omega^{it}L^2(M)$ in terms of a faithful weight $\omega$ on $M$, 
we have identifications 
$\sH \cong L^2(M)\otimes L^2(\R)$ in two ways, which transforms left and right multiplications of 
$\omega^{it}$ into a translational unitary by $t \in \R$. 
Recall that our weights are orthogonal direct sums of bounded functionals and 
the multiplication of $\omega^{is}$ on $\sH$ gives 
a continuous one-parameter group of unitaries. 

With these observations in mind, it is immediate to check the axioms of Hilbert algebra: 
the left and right multiplications 
are bounded with respect to the inner product, $\sN^2$ is dense in $\sH$ and 
$(f^*|g^*) = (g|f)$ for $f, g \in \sN$. 

\begin{Remark}
Note that the scaling automorphism $\theta_s$ satisfies 
$(\theta_sf)(t-i/2) = e^{-ist - s/2} f(t-i/2)$ and hence scales the inner product: 
$(\theta_sf|\theta_sg) = e^{-s} (f|g)$ for $f,g \in \sN$. 
\end{Remark}




In this way,  we have constructed a Hilbert algebra $\sN$. 
The associated von Neumann algebra is denoted by $N = M\rtimes\R$ and referred to as 
the \textbf{Takesaki dual}\index{Takesaki dual} of $M$ in what follows. 
The scaling automorphisms $\theta_s$ 
of $\sN$ induce a *-automorphic action (also denoted by $\theta_s$)  
of $\R$ on $N$ by $\theta_s(l(f)) = l(\theta_s f)$, 
which is referred to as the \textbf{dual action}. 
Here $l(f)$ denotes a bounded operator on $\sH$ defined by $l(f)g = fg$ for $g \in \sN$. 

Let $\omega$ be a faithful weight on $M$. From the convolution form realization of $\sN$ on 
$\sH$, one sees that $N$ contains $M$ as well as $\omega^{it}$ as operators by left multiplication and 
these in turn generates $N$. Likewise right multiplications of $M$ and $\omega^{it}$ generates 
the right action of $N$ on $\sH$. Thus the Takesaki dual of $M$ is isomorphic to the crossed product 
of $M$ with respect to the modular automorphism group $\{ \sigma^\omega_t\}$, which justifies our notation 
$M\rtimes \R$ for $N$. 

We record here the following well-known fact for later use together with a proof 
to illustrate how the essence can be easily captured in the modular algebra formalism. 

\begin{Theorem}[Takesaki] 
The fixed-point algebra $N^\theta$ of $N$ under the dual action $\theta$ 
is identified with $M$. 
\end{Theorem}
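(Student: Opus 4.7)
The plan is to realize the dual action $\theta$ spatially as conjugation by a one-parameter unitary group on $\sH$, and then identify the fixed-point algebra via a commutant computation in the direct integral.

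First, I would verify that $\theta_s$ is implemented by the unitary $U_s$ on $\sH = \oint_\R M(it+1/2)\,dt$ given by $(U_s\xi)(t-i/2) = e^{-ist}\xi(t-i/2)$. Despite $\theta_s$ scaling the inner product on $\sN$ by $e^{-s}$, this operator is unitary because the phase $e^{-ist}$ has unit modulus, and a direct convolution calculation yields $U_s l(f) U_s^* = l(\theta_s f)$ for $f \in \sN$; hence $\theta_s = \mathrm{Ad}(U_s)$ on $N$. The inclusion $M \subset N^\theta$ is then immediate, since the left action of $a \in M$ acts fiberwise in $t$ and commutes with the phase multiplication $U_s$.

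For the reverse inclusion, Stone's theorem identifies $\{U_s\}''$ with the maximal abelian algebra of multiplication operators in $t$, and its commutant is the algebra of operators decomposable with respect to the direct integral; hence $N^\theta = N \cap \{U_s\}'$ consists precisely of the decomposable elements of $N$. Fixing a faithful weight $\omega$, I would trivialize $\sH \cong L^2(M) \otimes L^2(\R)$ via $\xi(t-i/2) = \eta(t)\omega^{it}$ with $\eta(t) \in L^2(M)$. Using the modular identity $\omega^{it}b = \sigma^\omega_t(b)\omega^{it}$, the right actions of $\omega^{ir}$ and $b \in M$ (both of which lie in $N'$) become, respectively, pure translation $(\eta \cdot \omega^{ir})(t) = \eta(t-r)$ and twisted right multiplication $(\eta \cdot b)(t) = \eta(t)\sigma^\omega_t(b)$.

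A decomposable $x = \oint_\R x(t)\,dt \in N^\theta$ must commute with pure translation in $t$, which forces $x(t) = x_0$ to be constant in $t$; thus $x = x_0 \otimes 1$ for some $x_0 \in B(L^2(M))$. Commutation of $x_0$ with the right $M$-action $(\eta \cdot b)(t) = \eta(t)\sigma^\omega_t(b)$ then gives $x_0(\zeta c) = (x_0 \zeta)c$ for every $\zeta \in L^2(M)$ and every $c = \sigma^\omega_t(b)$; since $\sigma^\omega_t$ is an automorphism of $M$, $c$ ranges over all of $M$, so $x_0$ lies in the commutant of the right action of $M$ on $L^2(M)$, which by the standard form structure is $M$ itself. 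The main obstacle is in this last step: carefully unwinding the trivialization and the modular twist to reduce the commutation with the right $M$-action to the standard commutant identity on $L^2(M)$.
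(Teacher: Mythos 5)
Your proposal is correct and follows essentially the same route as the paper: the same right trivialization $\sH \cong L^2(M)\otimes L^2(\R)$, with $\theta_s$ acting as phase multiplication and the right action of $\omega^{ir}$ as translation, reducing $N^\theta$ first to constant-fiber decomposable operators and then to the commutant of the right $M$-action on $L^2(M)$. The only cosmetic difference is that the paper compresses your two-step commutant computation into a single appeal to Stone--von Neumann and verifies the final commutation by smearing the right $M$-action with $L^1$-functions rather than evaluating fiberwise at a fixed $t$.
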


\begin{proof}
Through $\sH \cong L^2(M)\otimes L^2(\R)$ adapted to 
the trivialization $M(it + 1/2)\omega^{-it} = L^2(M)$ of $M(it + 1/2)$, 
the right action of $\omega^{is}$ is realized on $L^2(\R)$ by translations 
whereas $\theta_s$ by multiplication of $e^{-ist}$ on $L^2(\R)$. 
Since these generate $\sB(L^2(\R))$ (Stone-von Neumann), $N^\theta$ is identified with 
$(\sB(L^2(M))\otimes 1) \cap \End(\sH_M)$. 
Let $a \in M$ and $f,g \in L^2(\R)$. 
For $\xi, \eta \in L^2(M)$, 
\[ 
(\eta\otimes g|(\xi\otimes f)a) = (\eta|\xi \sigma^\omega_{\overline g f})
\quad 
\text{with}
\quad 
\sigma^\omega_{\overline g f} = \int_\R \overline{g(t)} f(t)  \omega^{it}a\omega^{-it} \in M
\] 
shows that $T \in \sB(L^2(M))$ belongs to $N^\theta$ if and only if it is in the commutant 
of the right action of $\{ \sigma^\omega_h(a); h \in L^1(\R)\}$ on $L^2(M)$. 
Since $\{ \sigma^\omega_h(a); a \in M, h \in L^1(\R)\}$ generates $M$, this implies $N^\theta \subset M$. 
\end{proof}

Now we introduce some notations and conventions in connection with our Hilbert algebra: 
$\sN$ is regarded as a *-subalgebra of $N$ and we write 
$\sN \tau^{1/2} = \tau^{1/2} \sN$ to indicate 
the corresponding subspace in $\displaystyle \sH = \oint_\R M(it+1/2)\, dt$,  
where $\tau^{1/2}$ is just a dummy symbol but its square $\tau$ will be soon identified with the standard 
trace on $N$. 
Thus $h \in \sN$ is identified with an operator on $\sH$ satisfying $h(f\tau^{1/2}) = (hf)\tau^{1/2}$ 
for $f \in \sN$, whereas  $\displaystyle f \tau^{1/2} = \tau^{1/2} f = \oint_\R f(t-i/2)\, dt$. 

Let $B \supset \sN$ be a dense *-ideal of $N$ such that $B \tau^{1/2} = \tau^{1/2} B$ 
is the set of bounded vectors in $\sH$; $y \in N$ belongs to $B$ if and only if there exists a vector 
$\eta \in \sH$ satisfying $\eta f = y (f\tau^{1/2}) = y(\tau^{1/2}f)$ for any $f \in \sN$ and, 
if this is the case, we write $\eta = y\tau^{1/2} = \tau^{1/2} y$. 
Recall that the standard trace $\tau$ on $N_+$ is defined by 
$\tau(y^*y) = (y\tau^{1/2}|y\tau^{1/2})$ if $y \in B$ and $\tau(y^*y) = \infty$ otherwise. 
Note that, for $f,g \in \sN$, $f^*g \in \sN^2$ is in the trace class and its trace is calculated by 
\[ 
\tau(f^*g) = \int_\R (f(t-i/2)| g(t-i/2))\, dt = (f\tau^{1/2}|g\tau^{1/2}),  
\]
which justifies our notation $f\tau^{1/2}$. 

From the scaling relation $(\theta_s f)(t-i/2) = e^{-ist - s/2} f(t-i/2)$, the inner product is scaled 
by a factor $e^{-s}$ under the *-automorphism of $\sN$ and hence the associated trace $\tau$ scales 
like $\tau(\theta_s(y^*y)) = e^{-s} \tau(y^*y)$ for $y \in N$. 

To each $\xi, \eta \in \sH$, a sesquilinear element $\xi^*\eta \in N_*$ is associated by 
$\langle \xi^*\eta,x\rangle = (\xi|\eta x)$ and $a^*b \tau = \tau a^*b \in N_*$ is defined to be 
$(a\tau^{1/2})^*(b\tau^{1/2})$ for $a,b \in B$. 

As a square root of this correspondence, we have a unitary map $\sH \to L^2(N)$ in such a way that 
$|a|\tau^{1/2} \mapsto (a^*a\tau)^{1/2}$ for $a \in B$. 
Therefore, if we set $B_+ = B \cap N_+$, the closure of $B_+\tau^{1/2} = \tau^{1/2} B_+$ in 
$\sH$ corresponds to the positive cone $L^2(N)_+$. 

Related to these, we recall the following well-known and easily proved fact (cf.~\cite{S} Corollary~19.1). 

\begin{Lemma}\label{HS}
The Hilbert space $\sH$ is canonically isomorphic to 
the vector space of Hilbert-Schmidt class operators with respect to $\tau$ in such a way that 
$\tau(y^*y) = (y\tau^{1/2}| y\tau^{1/2})$. 
Note that a closed operator $y$ affiliated to $N$ is in the Hilbert-Schmidt class if and only if 
$\tau(y^*y) < \infty$. 
\end{Lemma}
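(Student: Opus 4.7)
The strategy is to exhibit the identification $y \leftrightarrow y\tau^{1/2}$ between closed operators $y$ affiliated with $N$ satisfying $\tau(y^*y) < \infty$ and vectors in $\sH$, extending the isometric embedding $B \hookrightarrow \sH$ built into the definition of $\tau$. Since $B\tau^{1/2} \supset \sN\tau^{1/2}$ is already dense in $\sH$, and since on $B$ the map $y \mapsto y\tau^{1/2}$ is linear, injective and norm-preserving (by the very definition $\tau(y^*y) := \| y\tau^{1/2}\|^2$), the only content of the lemma is to extend this identification to unbounded closed operators affiliated to $N$.

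For the forward direction, I would take a closed $y$ affiliated to $N$ with polar decomposition $y = u|y|$ ($u \in N$) and spectral projections $p_n = \chi_{[0,n]}(|y|) \in N$. Each $yp_n = u|y|p_n$ is a bounded element of $N$ lying in $B$: bounded elements of $N$ with finite $\tau$-square automatically belong to $B$, since $yp_n \tau^{1/2}$ is then approximated by $\sN\tau^{1/2}$-elements in the Hilbert-algebra norm. Normality of $\tau$ gives $\tau((yp_n)^*(yp_n)) = \tau(p_n |y|^2 p_n) \nearrow \tau(y^*y)$, so $\{yp_n \tau^{1/2}\}_n$ is Cauchy in $\sH$; its limit is the vector assigned to $y$, and the isometry $\tau(y^*y) = \| y\tau^{1/2}\|^2$ follows by continuity. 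Independence from the choice of approximation is automatic since distinct spectral cutoffs give the same limit on the dense subspace $\sN\tau^{1/2}$ via right-multiplication comparison.

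For the converse, given $\xi \in \sH$, define a linear operator $L_\xi$ on the dense domain $\tau^{1/2}\sN$ by $L_\xi(\tau^{1/2}g) = \xi g$, where $\xi \mapsto \xi g$ denotes the bounded right action of $g \in \sN$ on $\sH$ (bounded since $\sN$ is a Hilbert algebra). A direct computation $(L_\xi \tau^{1/2}g \mid \tau^{1/2}h) = (\xi \mid \tau^{1/2}h \cdot g^*)$ shows $L_\xi$ is preclosed. Its closure commutes with every right multiplication $r(f)$, $f \in \sN$, which by Hilbert-algebra commutation theory generate the whole right action of $N$ on $\sH$; consequently the closure is affiliated with $N$. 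Applying the forward direction to this closure recovers $y\tau^{1/2} = \xi$ and $\tau(y^*y) = \| \xi\|^2$, establishing the bijection.

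The main obstacle is the closability-and-affiliation step in the converse direction: one must verify that the symbolic rule $\tau^{1/2}g \mapsto \xi g$ extends to a closed operator whose closure genuinely commutes with the whole right action of $N$ (not merely of $\sN$), so as to be affiliated to $N$. This rests on the standard commutation theorem for (left) Hilbert algebras together with the density of $\sN\tau^{1/2}$ in $\sH$; once it is in place, the rest of the argument is bookkeeping within the already-established trace/Hilbert-algebra framework.
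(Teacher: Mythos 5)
The paper itself offers no proof of this lemma; it is recorded as a known fact with the citation to Segal, so the only benchmark is the standard argument behind that citation, and your proposal is essentially that argument: spectral truncation plus normality of $\tau$ in one direction, and the preclosed left-multiplication operator $L_\xi$ together with the commutation theorem for Hilbert algebras in the other. The forward direction is fine (and the claim that a bounded $y$ with $\tau(y^*y)<\infty$ lies in $B$ is true essentially by the paper's definition of $\tau$), as is the preclosedness/affiliation analysis of $\overline{L_\xi}$.

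There is, however, one genuinely circular step at the end of the converse: you cannot ``apply the forward direction'' to $y=\overline{L_\xi}$, because the forward direction takes $\tau(y^*y)<\infty$ as a hypothesis, and for $\overline{L_\xi}$ that finiteness is exactly what remains to be proved; for the same reason your criterion for membership in $B$ cannot be invoked for the truncations $Te_n$ at this stage. The standard repair is short: with $T=\overline{L_\xi}=u|T|$ and $e_n=\chi_{[0,n]}(|T|)$ one has $Te_n=(ue_nu^*)T$ on $D(T)$, so for $g\in\sN$, $Te_n(g\tau^{1/2})=ue_nu^*(\xi g)=(ue_nu^*\xi)g$ because $ue_nu^*\in N$ commutes with the right action. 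Hence $Te_n\in B$ with $Te_n\tau^{1/2}=ue_nu^*\xi$ and $\tau(e_n|T|^2e_n)=\|ue_nu^*\xi\|^2\le\|\xi\|^2$; normality then yields $\tau(T^*T)\le\|\xi\|^2<\infty$, after which your forward direction does apply and gives $T\tau^{1/2}=\lim_n ue_nu^*\xi=\xi$ (using that $\xi$ lies in the closed range of $T$, which follows from nondegeneracy of the right action of $\sN$). With this inserted the bijection and the isometry are complete; the remaining points you defer to the commutation theorem are legitimately standard, on a par with the paper's own deferral to Segal.
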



\section{Trace Formula}
We shall now utilize the Hilbert algebra structure behind $N$ to set up a method modeled after $\sN$ 
to calculate the standard trace $\tau$ on $N$. 

Given an open interval $I \subset [0,1/2]$, 
let $\widetilde{\cF}_I$ be the set of $M$-valued analytic functions of $z \in \R -iI$ and set 
$\cF_I = \cup_{\phi \in M_*^+} [\phi]\widetilde{\cF}_I[\phi]$. We write 
$f_\phi(z) \phi^{iz}$ for $\phi \in M_*^+$ and $f_\phi \in [\phi]\sF_I[\phi]$ 
to indicate dummies of elements in $\cF_I$. 
All such dummies are then identified 
by the relation $\varphi^{iz} = (\varphi^{iz}\psi^{-iz})\psi^{iz}$ whenever $\varphi \leq \psi$ and 
the obtained quotient set (which is a kind of inductive limit of dummy elements) 
is denoted by $\cL\cI_I$ and an element in $\cL\cI_I$ is called a left interpolator on $I$. 

Thus each left interpolator is of the form 
$f(z) = f_\varphi(z) \varphi^{iz}$ and we say that $f(z)$ is supported by $\varphi$. 
Then, for $\phi \in M_*^+$ mojorizing $\varphi$, $f(z)$ is supported by $\phi$ and 
$f_\phi(z) = f_\varphi(z) (\varphi^{iz} \phi^{-iz})$, 
which is also denoted by $f(z)\phi^{-iz}$. 

Clearly we have a similar notion of right interpolators with the obvious notations for them. 
These are related by the *-operation defined by $f^*(z) = f(-\overline{z})^*$: 
If $f \in\cL\cI_I$, $f^* \in \cR\cI_I$ so that $\phi^{-iz} f^*(z) = (f(-\overline{z}) \phi^{i\overline{z}})^*$. 

A pair $(l(z),r(z))$ of left and right interpolators on $I$ is called an \textbf{interpolator} 
if one can find $\phi \in M_*^+$ which supports $l$, $r$ and interrelates them in the following sense: 
For each $w \in \R - iI$, the function $\sigma^\phi_t(\phi^{-iw} r(w))$ of $t \in \R$ 
is analytically extended up to the horizontal line $w + \R$ so that the function
$\sigma^\phi_z(\phi^{-iw} r(w))$ is w*-analytic on $D = \{ (z,w) \in \C^2; w \in \R -iI, 
\Im w \leq \Im z \leq 0\}$ and satisfies 
$\sigma^\phi_w(\phi^{-iw} r(w)) = l(w) \phi^{-iw}$. 
Here, for $z \in \C \setminus \R$ and $a \in M$, 
$\sigma_z(a)$ means that $\sigma_t(a)$ ($t \in \R$) is analytically extended 
to a w*-continuous function of $\zeta \in \R + i\Im z[0,1]$ and it is evaluated at $\zeta = z$. 

Since analytical extensions are moved back to 
the starting horizontal lines, the condition is symmetrical in the left-and-right: 
$\sigma^\phi_{-t}(l(w)\phi^{-iw})$ is analytically extended to $\sigma^\phi_{w-z}(\phi^{-iw} r(w))$, 
which is w*-continuous in $(z,w) \in D$. For $(z,w) \in T_I$, the relation 
$\sigma^\phi_{z+w}(\phi^{-i(z+w)}r(z+w)) = l(z+w) \phi^{-i(z+w)}$ is then rewritten into 
$\sigma^\phi_w (\phi^{-i(z+w)}r(z+w)) = \sigma^\phi_{-z}(l(z+w) \phi^{-i(z+w)})$, 
which is a w*-analytic function of $(z,w) \in T_I$ and denoted by 
$\phi^{-iz} f(z+w) \phi^{-iw}$ when $(l(z),r(z))$ is symbolically expressed by $f(z)$. 

Moreover, the interrelating condition is compatible with the majorization changes: 
Let $\phi \leq \omega$ and $z \in \R - iI$. Then 
\[ 
\sigma^\omega_t(\omega^{-iz} r(z)) = (\omega^{-i(z-t)} \phi^{i(z-t)}) 
\sigma^\phi_t(\phi^{-iz} r(z)) \phi^{it} \omega^{-it}
\] 
is analytically continued from $t$ to $z$ to get 
$(l(z)\phi^{-iz})(\phi^{iz}\omega^{-iz}) = l(z) \omega^{-iz}$. 

We say that an interpolator $f(z) = (l(z),r(z))$ is supported by $\phi \in M_*^+$ if 
both $l(z)$ and $r(z)$ are supported by $\phi$ and, in that case, we write 
${}_\phi f(z) = \phi^{-iz}f(z) = \phi^{-iz} r(z)$ and 
$f_\phi(z) = f(z) \phi^{-iz} = l(z)\phi^{-iz}$. 

Let $\cI_I$ be the set of interpolators on $I$. 
By restriction or extension, $\cI_J \subset \cI_I$ if $I \subset J \subset (0,1/2)$. 
The *-operation on $\cI_I$ is defined by $(l(z),r(z))^* = (r^*(z),l^*(z))$ so that it is compatible with 
the inclusions $\cI_J \subset \cI_I$. 
Notice that $\sN$ can be regarded as a *-subspace of $\cI_{(0,1/2)}$. 

Given an asymptotic function $\rho: \R \setminus [-R,R] \to [0,\infty)$ 
with $R>0$ a positive real, 
an interpolator $f(z)$ on $I$ is said to have a $\rho$-growth and denoted by $f(z) = O(\rho(\Re z))$ 
if we can find $C>0$ so that $\| \phi^{-iz} f(z+w) \phi^{-iw}\| \leq C \rho(\Re(z+w))$ for any $(z,w) \in T_I$ 
satisfying $z+w \in \R \setminus [-R,R] - iI$. 
Note that the growth condition is well-defined thanks to the half-power analyticity for majorization. 

An interpolator $f$ is said to be 
of sub-gaussian growth 
if, for any small $\epsilon>0$, 
$f(z)\phi^{-iz} = O(e^{\epsilon (\Re z)^2})$. 
Let $\cI_I^g$ be the set of interpolators of sub-gaussian growth. 

For $f \in \cI_I^g$ with $I = (\alpha,\beta) \subset [0,1/2]$, 
we here introduce a sesqui-linear form on $\sN$ as follows. Continuous functions 
\[ 
F(s,t) = \bigl(h(t-i/2)| f_\phi(s-ir) \phi^{it+1/2} {}_\phi g(t-s+ir-i/2)\bigr)
\] 
of $(s,t) \in \R^2$ parametrized by $r \in I$ are of Gaussian decay 
with their absolutely convergent integrals independent of $r \in I$ owing to 
Cauchy's integral theorem. Moreover $F(s,t)$ does not depend on the choice of supporting $\phi$ either.  

Thus a sesqui-linear form $\langle\ |\ \rangle_f$ on $\sN$ is well-defined by 
\begin{multline*}
\langle h|g\rangle_f 
= \int_{\R^2} dsdt\,\bigl(h(t-i/2) | f_\phi(s-ir) \phi^{it + 1/2} {}_\phi g(t-s+ir -i/2)\bigr)\\ 
= \int_{\R^2} dsdt\,\bigl(h(t-i/2) (g^*)_\phi(-t+s+ir -i/2) \phi^{i(s-t)}| f_\phi(s-ir) \phi^{is + 1/2}\bigr)     
\end{multline*}
as far as $r \in I$ and $\phi \in M_*^+$ supports $f$ and $g$, 
which behaves well under the *-operation: 
$\langle g | h\rangle_{f^*} = \overline{\langle h|g\rangle_f}$. 
Notice that, when $f \in \sN$, $\langle h|g\rangle_f$ is reduced to $(h\tau^{1/2}|fg\tau^{1/2})$.  

We interprete the sequilinear form $\langle\ |\ \rangle_f$ as defining an operator $K$ 
in a kernel form by 
$(h\tau^{1/2}|K(g\tau^{1/2})) = \langle h|g\rangle_f$, which is referred to as 
the \textbf{virtual operator} of $f(z)$ and denoted by $f$ itself. 

Note that the *-operation on interpolators is compatible with the associated virtual operators; 
$(h\tau^{1/2}| f(g\tau^{1/2})) = \overline{(g\tau^{1/2}| f^*(h\tau^{1/2}))}$ for $g,h \in \sN$, and  
virtual operators are affiliated to $N$ in the sense that 
$(hk^*\tau^{1/2}|f(g\tau^{1/2})) = (h\tau^{1/2} | f(gk\tau^{1/2}))$ for $g,h,k \in \sN$.  

Let $D(f)$ be the set of vectors $g\tau^{1/2} \in \sN\tau^{1/2}$ which 
makes the conjugate-linear functional $h\tau^{1/2} \mapsto (h\tau^{1/2}|f(g\tau^{1/2}))$ bounded. 
For $g\tau^{1/2} \in D(f)$, if the vector $\xi \in \sH$ satisfying $(h\tau^{1/2}|\xi)$ is denoted by 
$f(g\tau^{1/2})$, then we obtain a linear operator on $\sH$ by 
$D(f) \ni g\tau^{1/2} \mapsto f(g\tau^{1/2}) \in \sH$. 

A virtual operator is said to be densely defined if $D(f)$ is dense in $\sH$. 
When the sesqui-linear form $\langle\ |\ \rangle_f$ itself is bounded, $D(f) = \sN \tau^{1/2}$ and 
the associated linear operator $\sN\tau^{1/2} \to \sH$ is bounded and identified with an element $y \in N$ 
in such a way that $\langle h|g\rangle_f = \bigl(h\tau^{1/2} | y(g\tau^{1/2})\bigr)$ for $g,h \in \sN$. 

We next introduce the virtual vector as a conjugate-linear form on $\sN^2 \tau^{1/2}$. 

\begin{Lemma}\label{OA}
If $\phi \in M_*^+$ supports $g,h \in \sN$, then vector-valued functions $(hg^*)_\phi(s) \phi^{1/2}$ 
and $\phi^{1/2} {}_\phi(hg^*)(s)$ of $s \in \R$ 
are analytically continued to $L^2(M)$-valued norm-continuous functions 
$(hg^*)_\phi(z) \phi^{1/2}$ and $\phi^{1/2} {}_\phi(hg^*)(z)$ of $z \in \R - i[0,1]$ so that these 
are of Gaussian decay and, for $0 \leq r \leq 1/2$, satisfy 
\begin{align*} 
(hg^*)_\phi(s-i(1-r)) \phi^{1/2} 
&= \int_\R h(t-i/2) (g^*)_\phi(-t+s + ir -i/2) \phi^{-it}\, dt,\\  
\phi^{1/2} {}_\phi(hg^*)(s-i(1-r))
&= \int_\R \phi^{it} {}_\phi h(t+s + ir -i/2) g^*(-t -i/2)\, dt 
\end{align*}
respectively. 
\end{Lemma}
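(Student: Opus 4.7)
The plan is to derive each of the two integral formulas in two stages. First I would establish the formula on the upper strip $\Im s \in [-1/2, 0]$ from the convolution product defining $hg^* \in \sN$, using the half-analyticity of $g^*$ (respectively $h$). Then I would extend to the lower strip $\Im s \in [-1, -1/2]$ by shifting the $t$-contour via Cauchy's theorem, and finally glue the two pieces on the common line $\Im s = -1/2$.

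Starting from $(hg^*)(s) = \int_\R h(t)\, g^*(s-t)\, dt$ at $s \in \R$, right-multiplication by $\phi^{-is+1/2}$ combined with the twist identity $g^*(s-t)\phi^{-is+1/2} = (g^*)_\phi(s-t)\,\phi^{-it+1/2}$ produces
\[ (hg^*)_\phi(s)\phi^{1/2} = \int_\R h(t)\, (g^*)_\phi(s-t)\, \phi^{-it+1/2}\, dt. \]
The right-hand side admits an analytic extension to $\Im s \in [-1/2, 0]$ by moving $(g^*)_\phi$ into its domain of half-analyticity, with norm-continuity up to the boundary and Gaussian decay coming from the corresponding bounds on $|h|_\phi$ and $|g^*|_\phi$. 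For $\Im s \in [-1, -1/2]$, I would apply Cauchy's theorem to shift $t \mapsto t - i/2$; the shift is justified by the half-analyticity of $h$ on $\Im t \in [-1/2, 0]$ and by the Gaussian decay, which kills the vertical-edge contributions. The resulting shifted integrand simplifies via $\phi^{-i(t-i/2)+1/2} = \phi^{-it}$ to
\[ (hg^*)_\phi(s)\phi^{1/2} = \int_\R h(t-i/2)\, (g^*)_\phi(s-t+i/2)\, \phi^{-it}\, dt, \]
which is analytic in $s$ for $\Im s \in [-1, -1/2]$ because $(g^*)_\phi(s-t+i/2)$ has its argument's imaginary part back in $[-1/2, 0]$. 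Substituting $s \mapsto s - i(1-r)$ with $r \in [0, 1/2]$ reproduces the lemma's formula, and the two extensions agree on $\Im s = -1/2$ by the very derivation (the shifted integrand reduces on that line to a contour deformation of the unshifted one).

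The second formula is obtained by the mirror manipulation: rewrite $(hg^*)(s) = \int h(s-u)\, g^*(u)\, du$, left-multiply by $\phi^{-is+1/2}$, and absorb the twist via $\phi^{-is+1/2}h(s-u) = \phi^{1/2-iu}\,{}_\phi h(s-u)$; after substituting $u \mapsto -u$ and shifting $u \mapsto u + i/2$ by Cauchy, the $\phi^{1/2}$-factor is consumed ($\phi^{1/2+i(u+i/2)} = \phi^{iu}$) and $g^*$'s argument drops to its lower boundary section $g^*(\cdot-i/2)$, yielding the stated formula. Gaussian decay of both extensions is obtained from the standard convolution bound combined with the Gaussian decay of the two factors, exactly as in the earlier verification that $\sN$ is closed under convolution. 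The main obstacle is making the Cauchy shift fully rigorous when the integrand is $L^2(M)$-vector-valued and only s*-analytic on one side of the relevant strip; this is handled by approximating from the strict interior of each strip and passing to the boundary using the norm-continuity and Gaussian decay of the boundary sections.
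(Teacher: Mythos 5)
Your proposal is correct and follows essentially the same route as the paper: extend on $\Im z\in[-1/2,0]$ via the half-analyticity of the product $hg^*\in\sN$ (equivalently, by continuing $(g^*)_\phi$ in the convolution formula), then reach $\Im z\in[-1,-1/2]$ through the boundary identity $(hg^*)_\phi(s-i/2)\phi^{1/2}=\int h(t-i/2)(g^*)_\phi(-t+s)\phi^{-it}\,dt$ and a further continuation of $(g^*)_\phi$, with Gaussian decay from the convolution bound. The contour shift $t\mapsto t-i/2$ you carry out explicitly is exactly the mechanism the paper leaves implicit when it invokes that boundary identity as already known.
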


\begin{proof}
We already know that $(hg^*)_\phi(s)$ has an s*-continuous analytic extension $(hg^*)_\phi(z) \in M$ 
to $z \in \R - i[0,1/2]$ so that $(hg^*)_\phi(s -i/2) \phi^{1/2} = f(s-i/2)\phi^{-is}$, whereas 
\begin{align*} 
(hg^*)_\phi(s -i/2) \phi^{1/2} 
&= \int h(t-i/2) g^*_\phi(-t + s) \phi^{-it}\, dt \\
\end{align*}
is analytically continued to the norm-continuous function 
\[ 
\int h(t-i/2) g^*_\phi(-t + z)\phi^{-it}\, dt 
\] 
of $z \in \R -i[0,1/2]$, which is of Gaussian decay as a convolution of functions of Gaussian decay.  
\end{proof}

The sesqui-linear form $\langle h|g\rangle_f$ is now expressed by
\[ 
\langle h|g\rangle_f 
= \int_\R ds\,\bigl( (hg^*)_\phi(s-i(1-r)) \phi^{is + 1/2} | f_\phi(s-ir) \phi^{is + 1/2}\bigr), 
\] 
whenever $0 < r < 1$ and $\phi$ supports $g$, $h$ as well as $f$, which reveals that 
a conjugate-linear form $f\tau^{1/2}$ on $\sN^2\tau^{1/2}$ is well-defined by the relation
\[ 
(hg^*\tau^{1/2}|f\tau^{1/2}) = \langle h|g\rangle_f
\] 
and called the \textbf{vitual vector} of $f$. 

Note that the virtual vector of $f^*$ is given by $(f\tau^{1/2})^*$ which is defined by 
$(\xi| (f\tau^{1/2})^*) = \overline{(\xi| f\tau^{1/2})}$ for $\xi \in \sN^2 \tau^{1/2}$: 
\[ 
\langle h|g\rangle_{f^*} = \overline{\langle g| h\rangle_f} 
= \overline{(gh^*\tau^{1/2}|f\tau^{1/2})} = (hg^*\tau^{1/2}| (f\tau^{1/2})^*). 
\] 

These are also referred to as a \textbf{boundary operator} and a \textbf{boundary vector} for 
$I = (0,\nu)$ and $I = (\nu,1/2)$ with additional notations 
$\int f(t)\, dt$ and $\oint f(t-i/2)\, dt$ respectively. We now focus on these. 

\medskip
\noindent\textbf{Boundary Operator:} 
In extracting linear operators from the kernel form of boundary operators, 
the following illustrates the meaning of boundary (limit). 

Let $D$ be the set of s*-continuous sections of $\{ M(it+1/2)\}$ of Gaussian decay, 
which is a topological vector space of inductive limit 
of Banach spaces $D_\delta = \{ \{ \xi(t)\} \in \{ M(it+1/2)\}; \| \xi\|_\delta < \infty\}$ 
with $\| \xi\|_\delta = \sup\{ e^{\delta t^2} \| \xi(t)\|; t \in \R\}$. 
The embedding $D_\delta \to \sH$ is norm-continuous and therefore 
so is $D \to \sH$. 
For $f \in \cI_I^g$ with $I = (0,\nu)$ and $\xi \in D_\delta$
\[ 
\int_\R f_\varphi(s-ir) \varphi^{it}\xi\, ds 
\]
is norm-convergent in $D_{\delta'}$ for any $\delta' < \delta$ and 
gives a bounded linear map $D_\delta \to D_{\delta'}$, 
which depends continuously on $r \in I$ in the norm-topology of $\sB(D_\delta,D_{\delta'})$. 
The induced continuous linear operator 
on $D$ is then denoted by $\int_\R f_\varphi(s-ir) \varphi^{is}\, ds$. 
We say that $\int_\R f_\varphi(s-ir) \varphi^{is}\, ds$ is bounded 
if it is bounded as a densely defined linear operator on $\sH$. 

Note that, if $\int_\R f_\varphi(s-ir)\varphi^{is}\, ds \in \sB(\sH)$ is locally norm-bounded for $r \in I$, 
it is s-continuous in $r \in I$ by the density of $D$ in $\sH$. 

\begin{Lemma}
Let $f \in \cI_I^g$ be supported by $\varphi \in M_*^+$. 
Assume that 
\[ 
D \ni \xi \mapsto \int_\R f_\varphi(s-ir) \varphi^{is}\xi\, ds \in \sH 
\]
gives rise to a bounded linear operator $y_r = \int_\R f_\varphi(s-ir) \varphi^{is}$ on $\sH$ and 
\[ 
y = \int_\R f(s-i0)\, ds = \lim_{r \to +0} \int_\R f_\varphi(s-ir) \varphi^{is}\, ds 
\]
exists in the w*-topology of $N$. 

Then the boundary operator of $f(z)$ is bounded and given by the above limit. 
\end{Lemma}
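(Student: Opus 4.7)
The plan is to establish the identity $\langle h|g\rangle_f = (h\tau^{1/2}|y_r(g\tau^{1/2}))$ for every $r \in I$ and all $g,h \in \sN$, and then to conclude by taking the w*-limit $r \to 0^+$.

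First, since $g\tau^{1/2}$ is a norm-continuous section of Gaussian decay it belongs to the space $D$; so the hypothesis $y_r \in \sB(\sH)$ together with the left action formula $a\varphi^{is}\oint\xi(t)\,dt = \oint a\varphi^{is}\xi(t-s)\,dt$ gives
\[
(h\tau^{1/2}|y_r(g\tau^{1/2})) = \int_{\R^2}\bigl(h(t-i/2)\,\bigm|\,f_\varphi(s-ir)\,\varphi^{is}\,g(t-s-i/2)\bigr)\,ds\,dt.
\]

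Second, I would match this against the defining integral of $\langle h|g\rangle_f$. A direct exponent count (choosing $\phi=\varphi$) collapses $\varphi^{it+1/2}\,{}_\varphi g(t-s+ir-i/2)$ into $\varphi^{i(s-ir)}g(t-s+ir-i/2)$, yielding
\[
\langle h|g\rangle_f = \int_{\R^2}\bigl(h(t-i/2)\,\bigm|\,f_\varphi(s-ir)\,\varphi^{i(s-ir)}\,g(t-s+ir-i/2)\bigr)\,ds\,dt.
\]
The two integrands differ only by the coherent complex translation $s \mapsto s-ir$ applied to the factor $\varphi^{is}g(t-s-i/2)$; using the half-analyticity of $g$ on $T_{[0,1/2]}$, the KMS-type analyticity of $\varphi^{is}$ packaged by the interpolator formalism, and the sub-gaussian growth of $f$, Cauchy's integral theorem in the $s$-variable (the same theorem used to establish the $r$-independence of $\langle h|g\rangle_f$) identifies the two integrals, giving $\langle h|g\rangle_f = (h\tau^{1/2}|y_r(g\tau^{1/2}))$ for each $r \in I$.

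Third, take $r \to 0^+$. The matrix coefficient $x \mapsto (h\tau^{1/2}|x(g\tau^{1/2}))$ is a normal functional on $N$ in view of the identification $\sH \cong L^2(N)$ (Lemma~\ref{HS}), so w*-convergence $y_r \to y$ yields $\langle h|g\rangle_f = (h\tau^{1/2}|y(g\tau^{1/2}))$. Since $g,h \in \sN$ are arbitrary and $\sN\tau^{1/2}$ is dense in $\sH$, the virtual (kernel-form) operator of $f$ is represented by the bounded operator $y \in N$; this is exactly the assertion that the boundary operator is bounded and equal to $y$.

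The main obstacle will be the matching step: justifying that the interpolator-form integrand and the direct operator-action integrand represent the same value through an $s$-contour shift. This is where one genuinely uses the half-analyticity of $g$ on the two-variable domain together with the sub-gaussian growth condition defining $\cI_I^g$; the rest of the argument is a formal application of w*-convergence to a normal vector functional on $L^2(N) = \sH$.
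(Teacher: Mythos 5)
Your overall strategy (identify the kernel form $\langle h|g\rangle_f$ with a matrix coefficient of a bounded operator, then pass to the w*-limit) is the right one, but the matching step as you state it is false, and the way you propose to justify it is not a legitimate use of Cauchy's theorem. After collapsing the powers of $\varphi$, the integrand defining $\langle h|g\rangle_f$ is $f_\varphi(s-ir)\,\varphi^{i(s-ir)}\,g(t-s+ir-i/2)$, while the integrand of $(h\tau^{1/2}\,|\,y_r(g\tau^{1/2}))$ is $f_\varphi(s-ir)\,\varphi^{is}\,g(t-s-i/2)$. You propose to pass from one to the other by the translation $s\mapsto s-ir$ ``applied to the factor $\varphi^{is}g(t-s-i/2)$'' only. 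Cauchy's theorem shifts the contour of the \emph{entire} integrand; the honest shift $s\mapsto s+ir$ turns the first integrand into $f_\varphi(s)\,\varphi^{is}\,g(t-s-i/2)$, i.e.\ it lands you at the boundary parameter $r=0$ of $f_\varphi$, exactly where no bounded operator is assumed to exist --- which is the whole point of the lemma. Moreover the identity $\langle h|g\rangle_f=(h\tau^{1/2}\,|\,y_r(g\tau^{1/2}))$ for fixed $r>0$ is genuinely false: the left side is independent of $r$ while the right side is not. Already for $M=\C$ and $f(z)=F(z)$ scalar, $y_r$ is the Fourier multiplier by $e^{-r\lambda}\widehat F(\lambda)$, which varies with $r$ for generic $F$.

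The repair, which is what the paper does, is to keep the deformation inside the vector instead of trying to remove it: the defining integral of $\langle h|g\rangle_f$ is \emph{exactly} $(h\tau^{1/2}\,|\,y_r\xi_r)$ with the deformed vector $\xi_r(t)=\varphi^{r}g(t+ir-i/2)$, an algebraic identity requiring no contour shift at all. One then checks that $r\mapsto\xi_r$ is a norm-analytic (in particular norm-continuous) $D_\delta$-valued function with $\xi_0=g\tau^{1/2}$, observes that the assumed w*-convergence of $y_r$ forces $\sup\|y_r\|<\infty$ near $r=0$ by uniform boundedness, and concludes $y_r\xi_r\to y\xi_0=y(g\tau^{1/2})$ weakly, whence $\langle h|g\rangle_f=(h\tau^{1/2}\,|\,y(g\tau^{1/2}))$. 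Your final density and w*-limit considerations are fine once this corrected intermediate identity is in place, but as written the proof does not go through.
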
 

\begin{proof}
Given $g \in \sN$ and $\varphi \in M_*^+$, 
choose $\phi \in M_*^+$ so that it supports $g$ and majorizes $\varphi$. 
Then, 
\[ 
\R^2 \ni (s,t) \mapsto 
\varphi^{is} f(t-s-i/2) \phi^{-it} 
\in L^2(M)
\] 
is analytically extended to an $L^2(M)$-valued norm-continuous function 
$(\varphi^{iz} \phi^{-iz}) \bigl(\phi^{iz} g(t-z-i/2)\bigr)$ of $z \in \R - i[0,1/2]$ and $t \in \R$, 
which is denoted by $\varphi^{iz} g(t-z-i/2)$. 

Since $\varphi^{iz} g(t-z-i/2) = (\varphi^{iz} \phi^{-iz}) \phi^{it + 1/2} {}_\phi g(t-z-i/2)$, 
$(\varphi^{iz}g)(t-i/2) = \varphi^{iz} g(t-z-i/2)$ belongs to $D_\delta$ as a function of $t \in \R$ if 
$|g|_\phi(t) = O(e^{-\delta t^2})$. Thus, $\xi_r(t) = \varphi^r g(t+ir - i/2)$
is a $D_\delta$-valued norm-analytic function of $r$. 

By our assumptions, 
s*-continuous family $\{ y_r \}_{r \in I}$ in $N$
converges to $y$ in w*-topology as $r \to +0$, 
whence the operator norm $\| y_r\|$ is bounded in a neighborhood of $r=0$ and we see that 
\[ 
y_r \xi_r = \lim_{r \to 0} y_r\xi_r 
= \lim_{(r',r'') \to (0,0)} y_{r'} \xi_{r''} =  \lim_{r' \to 0} y_{r'} \xi_0 
= y\xi_0.  
\]

Now the identity
\[ 
\int f_\phi(s-ir) \phi^{it+1/2} {}_\phi g(t-s+ir - i/2)\, ds = (y_r\xi_r)(t) 
\]
is used to get 
\[ 
\langle h|g\rangle_f = (h\tau^{1/2}|y_r\xi_r) = (h\tau^{1/2}|y\xi_0) = 
(h\tau^{1/2}|y(g\tau^{1/2})). 
\]
\end{proof}

\begin{Corollary} 
Let $f(z)$ be an interpolator on $I = (0,\nu)$ ($0 < \nu \leq 1/2$) and 
suppose that $f$ is supported by a $\phi \in M_*^+$ so that 
$f_\phi(z) = f(z)\phi^{-iz}$ is a scalar operator of polynomial growth 
with its horizontal Fourier transform 
$\int_\R f_\phi(s-ir) e^{is\lambda}\, ds$ being in $L^\infty(\R)$ for a small $r>0$ and w*-converging to
$\widehat{f_\phi} \in L^\infty(\R)$ as $r \to 0$, then the boundary operator of $f(z)$ is a bounded operator 
\[ 
\widehat{f_\phi}(\log \phi) = \int_\R \widehat{f_\phi}(\lambda) E(d\lambda) \in N. 
\] 
Here $E(\cdot)$ denotes the spectral measure of $\phi^{it}$: $\phi^{it} = \int_\R e^{it\lambda} E(d\lambda)$. 
\end{Corollary}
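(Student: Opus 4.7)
The plan is to invoke the preceding lemma by identifying the approximating operator $y_r := \int_\R f_\phi(s-ir)\phi^{is}\, ds$ with the spectral functional calculus $\widehat{f_{\phi,r}}(\log\phi)$, and then pass to the limit $r \to +0$ inside the functional calculus. The scalar nature of $f_\phi(z)$ is the key simplification: it means $f_\phi(s-ir)\phi^{is}$ lies in the abelian von Neumann algebra generated by $\{\phi^{it}\}$, so the whole problem reduces to commutative harmonic analysis on the spectrum of $\log\phi$.

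First I would fix a small $r>0$ in the range of the hypothesis and, testing against vectors $\eta$ in the dense domain $D$ of Gaussian-decay sections, apply Fubini to compute
\[
\int_\R f_\phi(s-ir)\phi^{is}\eta\, ds
= \int_\R \left( \int_\R f_\phi(s-ir) e^{is\lambda}\, ds \right) E(d\lambda)\,\eta
= \widehat{f_{\phi,r}}(\log\phi)\,\eta.
\]
The interchange is justified because on each $D_\delta$ the function $s \mapsto \phi^{is}\eta$ stays norm-bounded while the polynomial growth of $f_\phi$ is absorbed by the Gaussian decay of $\eta$; the resulting operator $y_r = \widehat{f_{\phi,r}}(\log\phi)$ is therefore an element of $N$ of norm at most $\|\widehat{f_{\phi,r}}\|_\infty$. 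This places the setup into the hypothesis of the preceding lemma for each such $r$.

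Next I would take $r \to +0$. Using uniform boundedness on the family $\{\widehat{f_{\phi,r}}\}$ in $L^\infty(\R)$, together with the assumed w*-convergence $\widehat{f_{\phi,r}} \to \widehat{f_\phi}$, the spectral calculus yields
\[
\langle y_r, \omega\rangle
= \int_\R \widehat{f_{\phi,r}}(\lambda)\, d\mu_\omega(\lambda)
\to \int_\R \widehat{f_\phi}(\lambda)\, d\mu_\omega(\lambda)
= \langle \widehat{f_\phi}(\log\phi), \omega\rangle
\]
for every $\omega \in N_*$, where $\mu_\omega(\Delta) = \omega(E(\Delta))$ is the scalar spectral measure. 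Hence $y_r \to \widehat{f_\phi}(\log\phi)$ in the w*-topology of $N$, and the preceding lemma identifies this limit as the boundary operator of $f(z)$.

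The main obstacle is the compatibility between w*-convergence in $L^\infty(\R)$ (by default paired with $L^1$ of Lebesgue measure) and the integrals against the spectral measures $\mu_\omega$, which in general need not be absolutely continuous with respect to Lebesgue. The cleanest way around this is to read the hypothesis as w*-convergence of $\widehat{f_{\phi,r}}$ inside the abelian von Neumann algebra generated by $\{\phi^{it}\}$, or equivalently to require the convergence for every finite Borel $\mu_\omega$; under either interpretation the transfer through the functional calculus is immediate, and the rest of the argument is routine bookkeeping.
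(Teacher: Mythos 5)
Your route is the same as the paper's: identify $y_r=\int_\R f_\phi(s-ir)\phi^{is}\,ds$ with the functional calculus $\widehat{f_{\phi,r}}(\log\phi)$ of the scalar symbol and feed the w*-limit into the preceding lemma. The paper's entire proof is the single remark that, under the left trivialization $[\phi]L^2(N)\cong L^2(\R)\otimes[\phi]L^2(M)$, the whole problem reduces to $L^\infty(\R)$ acting on $L^2(\R)$ plus classical harmonic analysis on the line.

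The one place where your argument does not close is exactly the point you flag at the end, and your proposed fix is not the right one: reading the hypothesis as w*-convergence against every finite Borel measure $\mu_\omega$ changes the statement being proved. The corollary as stated, with w*-convergence in $L^\infty(\R)$ dual to $L^1(\R,\text{Lebesgue})$, is correct as it stands; the missing observation is that the spectral measure $E$ of $\{\phi^{it}\}$ on $[\phi]L^2(N)$ is equivalent to Lebesgue measure, so the dichotomy you worry about does not arise. Indeed, in the trivialization $[\phi]\sH\cong L^2(\R)\otimes[\phi]L^2(M)$ the unitaries $\phi^{it}$ act by translation on the $L^2(\R)$ factor (the paper states this and reuses it in the Gaussian example), so after a Fourier transform $\log\phi$ becomes multiplication by $\lambda$ on $L^2(\R,d\lambda)\otimes[\phi]L^2(M)$. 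Hence every scalar spectral measure $\mu_\omega$ with $\omega\in N_*$ has an $L^1(\R,d\lambda)$ density, the abelian W*-algebra generated by $\{\phi^{it}\}$ inside $[\phi]N[\phi]$ is a copy of $L^\infty(\R,d\lambda)$ with its usual predual, and w*-convergence of $\widehat{f_{\phi,r}}$ against $L^1(\R)$ is precisely w*-convergence of $y_r$ in $N$. With that inserted, your Fubini step (which, since $f_\phi(\cdot-ir)$ has only polynomial growth, is more safely phrased as the standard identity that convolution by a polynomial-growth kernel equals the Fourier multiplier by its distributional transform, verified on the dense Gaussian-decay domain $D$) and the appeal to the preceding lemma complete the proof.
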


\begin{proof}
Due to the left trivialization $[\phi]L^2(N) \cong L^2(\R)\otimes [\phi] L^2(M)$, the whole thing is 
reduced to $L^\infty(\R)$ on $L^2(\R)$ and the classical harmonic analysis on the real line works. 
\end{proof}

\begin{Example}
If $f(z) \phi^{-iz}$ extends to a bounded w*-continuous $M$-valued function of $z \in \R - i[0,\nu)$ 
in such a way that there exists an integrable function $\rho(t)$ satisfying 
$\|f(t-ir)\phi^{-i(t-ir)}\| \leq \rho(t)$ for $t \in \R$ and $0 \leq r < \nu$, 
then the boundary operator is bounded and hence belongs to $N$. 
\end{Example}

\begin{Example}
For $\phi \in M_*^+$ and $\mu \in \C$, 
consider an interpolator $f(z) = \frac{1}{\mu + iz} \phi^{iz}$ on $I$ with $I$ specified 
according to $\mu$ as follows: 
\begin{enumerate}
\item $I = (0,1/2)$ ($\Re\mu \geq 0$). 
The boundary operator is given by $2\pi(1\vee \phi)^{-\mu}$. 
\item 
Either $I = (0,-\Re\mu)$ ($-1/2 < \Re\mu < 0$) or 
$I = (0,1/2)$ ($\Re \leq -1/2$). 
Then the boundary operator is given by
$-2\pi (1\wedge \phi)^{-\mu}$ for $\Re\mu < 0$. 
\end{enumerate}

Here, with the help of a spectral decomposition $\phi^{it} = \int_\R e^{it\lambda} E(d\lambda)$, 
\[ 
(1\vee \phi)^{-\mu} = \int_0^\infty e^{-\mu \lambda} E(d\lambda), 
\quad 
(1\wedge \phi)^{-\mu} = \int_{-\infty}^0 e^{-\mu \lambda} E(d\lambda). 
\]
\end{Example}

\medskip
\noindent\textbf{Boundary Vector:} 
We next look into boundary vectors. 
Let $f(z) \in \sI_I^g$ with $I = (\nu,1/2)$ and $g,h \in \sN$. 
In the expression 
\[
(hg^*\tau^{1/2} | f\tau^{1/2}) 
= \int_\R ds\,\bigl( (hg^*)_\varphi(s-i(1-r)) \varphi^{is + 1/2} | f_\varphi(s-ir) \varphi^{is + 1/2}\bigr)  
\] 
($g$, $h$ and $f(z)$ being supported by $\varphi \in M_*^+$), 
notice that the norm-convergence 
$\displaystyle \lim_{r \to 1/2} (hg^*)_\varphi(s-i(1-r)) \varphi^{1/2} = (hg^*)(s-i/2) \varphi^{-is}$ 
in $L^2(M)$ is uniformly in $s \in \R$ and 
the domination $\| (hg^*)_\varphi(s-i(1-r)) \varphi^{1/2}\| \leq C e^{-\delta s^2}$ holds uniformly in $r$, 
whereas $\|f_\varphi(s-ir)\| = O(e^{\epsilon s^2})$ uniformly in $r$ for any $\epsilon>0$. 

Thus, if $f(z)$ satisfies the condition that 
\begin{enumerate} 
\item 
$\rho_\varphi(s) = \sup\{ \| f_\varphi(s-ir)\|; r \in (\nu,1/2)\}$ 
is a locally integrable function of $s \in \R$ for some supporting $\varphi$ 
and 
\item 
we can find a locally integrable measurable section $\eta(s) \in M(is+1/2)$ 
so that, for a sufficiently large $\phi$ and for almost all $s$, 
$f_\phi(s-ir) \phi^{is + 1/2}$ converges weakly to $\eta(s)$ in $M(is+1/2)$ as $r \to 1/2$, 
\end{enumerate}
then we have the expression 
\[ 
(hg^*\tau^{1/2} | f\tau^{1/2}) = \int_\R \bigl( (hg^*)(s-i/2) | \eta(s) \bigr)\, ds, 
\] 
which shows that the boundary vector of $f(z)$ is represented 
by the measurable section $\eta(s) \in M(is+1/2)$. 
Note here that 
$\| \eta(s)\|$ is of sub-gaussian growth. 

\begin{Example}
If $f_\phi(z)$ is extended to an $M$-valued w*-continuous function of 
$z \in \R - i(\nu,1/2]$, then $\rho_\phi(s)$ is locally bounded and 
$\eta(s) = f_\phi(s-i/2) \phi^{is+1/2} = f(s-i/2)$ meets the requirements. 
\end{Example}




\begin{Example}
Again consider $f(z) = \frac{1}{\mu + iz} \varphi^{iz}$ on $I$ 
but this time $I = (-\Re\mu,1/2)$ if $-1/2 < \Re\mu < 0$ and $I = (0,1/2)$ otherwise. 

Then, for $\Re\mu \not= -1/2$, 
the boundary vector of $f$ belongs to $\sH$ and 
is given by $f(t-i/2) = (\mu + it + 1/2)^{-1} \varphi^{it+1/2}$.

When $\Re\mu \not\in [-1,-1/2]$, the expression 
\begin{align*}
(k\tau^{1/2}| f\tau^{1/2}) 
&= \int_\R  \frac{1}{it + \mu + 1/2} \langle k^*(-t-i/2)\varphi^{it+1/2}\rangle\, dt\\ 
\end{align*}
for $k \in \sN^2$ is analytically changed in the integration variable to get 
\[ 
(k\tau^{1/2}| f\tau^{1/2}) = \int_\R  \frac{\phi(k^*(-t)\varphi^{it})}{it + \mu + 1} \, dt.  
\] 

Thus the parametric limit of $f\tau^{1/2}$ exists in simple convergence 
as $\mu$ approaches to a point in $\Re\mu = -1/2$ 
from the right ($\Re\mu > -1/2$). 

Now let $\mu = im -1/2$ ($m \in \R$) be on the critical line $\Re\mu = -1/2$ and 
set $\epsilon = 1/2 - r$. 
By Lemma~\ref{OA}, we have 
\begin{align*}
\langle h|g \rangle_f 
&= \int_\R \frac{1}{i(s+m) + 1/2} \phi\bigl( \phi^{is} (gh^*)(-s) \bigr)\,ds,  
\end{align*}
which reveals that the boundary vector of $f(z)$ coincides with 
\[ 
\lim_{\epsilon \to +0} \oint_\R \frac{1}{i(t+m) + \epsilon} \phi^{it+1/2}\, dt.  
\] 
\end{Example}


We now generalize the notion of interpolators on $I = (0,1/2)$ so that $f(z)$ is allowed to 
be not defined on a compact subset $K$ of $\R - i(0,1/2)$. 
The various analyticity is then defined just avoiding $K$. Since the growth condition is 
about horizontal asymptotics, it remains having a meaning as well. 

We introduce the residue operator $R_f = \oint_K f(z)\, dz: \sN \tau^{1/2} \to \sH$ by 
\[ 
R_f (g\tau^{1/2}) = \oint_\R \left(\oint f(z) g(t-z-i/2)\, dz \right)\, dt. 
\]
Here $f(z) g(t-z-i/2) = f_\phi(z) \phi^{it+1/2} {}_\phi g(t-z-i/2)$ 
is an $M(it+1/2)$-valued analytic function of $z \in (\R -i[0,1/2]) \setminus K$ 
and the coutour integral is performed by surrounding $K$.

\begin{Theorem}[Trace Formula]
Let $f(z)$ be an interpolator on $(0,1/2)$ of sub-gaussian growth 
and assume that the boundary vector 
$f\tau^{1/2} = \oint_\R f(t-i/2)\, dt$ exists in $\sH$. 

Then the sum of the boundary operator $f$ and the residue operator $R_f$ is $\tau$-measurable and
we have 
\[ 
\tau((f+R_f)^*(f+R_f)) = (f\tau^{1/2} | f\tau^{1/2}) 
= \int_\R \bigl(f(t-i/2)|f(t-i/2)\bigr)\, dt. 
\]
\end{Theorem}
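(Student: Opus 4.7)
The plan is to exploit the Hilbert--Schmidt correspondence of Lemma~\ref{HS}: the assumed boundary vector $f\tau^{1/2} \in \sH$ corresponds uniquely to a closed operator $y$ affiliated with $N$ in the Hilbert--Schmidt class, with $y\tau^{1/2} = f\tau^{1/2}$ and $\tau(y^*y) = (f\tau^{1/2}\mid f\tau^{1/2})$. The theorem then reduces to proving that $y$ extends the densely defined operator $f+R_f$ on $\sN\tau^{1/2}$; once this is established, $\tau$-measurability of $f+R_f$ and the stated trace formula follow at once from Lemma~\ref{HS}.

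The core calculation is to verify, for every $g,h \in \sN$, the identity
\[
\bigl(h\tau^{1/2}\bigm| (f+R_f)(g\tau^{1/2})\bigr) = \bigl(hg^*\tau^{1/2}\bigm| f\tau^{1/2}\bigr).
\]
The first summand on the left is by definition $\langle h\mid g\rangle_f$, which unfolds as the double integral $\int_{\R^2} ds\,dt\,(h(t-i/2)\mid f_\phi(s-ir)\phi^{it+1/2}{}_\phi g(t-s+ir-i/2))$ for any $r$ in a neighbourhood of $0^+$ avoiding $K$. On the other hand, the right-hand side, via the representation of the boundary vector derived in Lemma~\ref{OA}, is the same double integral but with $r$ in a neighbourhood of $(1/2)^-$. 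Under the sub-Gaussian growth of $f$ together with the Gaussian decay of $g,h \in \sN$, the integrand is dominated by an integrable function of $(s,t)$ uniformly in $r$, so Fubini's theorem applies and we may slide the contour $z = s-ir$ from the top of the strip $\R - i(0,1/2)$ to its bottom. Cauchy's residue theorem contributes precisely the poles of $f$ enclosed by $K$, and after a second use of Fubini this residue contribution equals $\int_\R (h(t-i/2)\mid \oint_K f(z)g(t-z-i/2)\,dz)\,dt = (h\tau^{1/2}\mid R_f(g\tau^{1/2}))$, which is exactly the second summand on the left.

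With the identity in hand, the Hilbert--Schmidt identification of $\sH$ with $L^2(N)$ gives $\bigl(hg^*\tau^{1/2}\bigm| y\tau^{1/2}\bigr) = \bigl(h\tau^{1/2}\bigm| y(g\tau^{1/2})\bigr)$ for $g,h \in \sN \subset B$, so $y$ agrees with $f+R_f$ on the dense subspace $\sN\tau^{1/2}$. Since $y$ is Hilbert--Schmidt and hence $\tau$-measurable, we conclude
\[
\tau\bigl((f+R_f)^*(f+R_f)\bigr) = \tau(y^*y) = (y\tau^{1/2}\mid y\tau^{1/2}) = (f\tau^{1/2}\mid f\tau^{1/2}) = \int_\R (f(t-i/2)\mid f(t-i/2))\,dt.
\]

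The main obstacle is making the contour shift rigorous: one must justify that the $(s,t)$-integration commutes with the vertical deformation of the contour $z = s-ir$ across the compact singular set $K$, i.e., that the vertical sides at $s\to\pm\infty$ in a large finite truncation contribute nothing in the limit. This is precisely where the sub-Gaussian growth hypothesis on $f$ is essential, because combined with the Gaussian decay of $|g|_\phi$ and $|h|_\phi$ (via Lemma~\ref{2-continuity}) it dominates the integrand by an integrable, $r$-uniform function over all horizontal segments inside $T_{[0,1/2]}$ outside any neighbourhood of $K$, enabling both Fubini and the residue calculation.
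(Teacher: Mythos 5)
Your proposal follows essentially the same route as the paper: shift the contour of the defining double integral from $r$ near $0$ to $r$ near $1/2$, so that by the residue theorem the virtual operator near the $-i/2$ boundary equals $f+R_f$, identify that operator with left multiplication by the boundary vector $f\tau^{1/2}$, and conclude via the Hilbert--Schmidt correspondence of Lemma~\ref{HS}. The dominated-convergence/Fubini justification you supply for the contour deformation is exactly what the paper's terse appeal to ``the residue formula'' leaves implicit, so the argument is sound and matches the intended proof.
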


\begin{proof} 
Let $V_f$ be the virtual operator of $f(z)$ ($z \in \R - i(1/2 - \epsilon,1/2)$). 
By the residue formula, $V_f = f + R_f$ and, for $g,h \in \sN$, 
\[ 
(h\tau^{1/2} | V_f(g\tau^{1/2})) 
= (h\tau^{1/2} | (f\tau^{1/2})g) = (h^{1/2}\tau^{1/2} | l(f\tau^{1/2}) (g\tau^{1/2}))
\]
shows that the virtual operator $V_f$ is closable 
with its closure given by $l(f\tau^{1/2})$. Lemma~\ref{HS} is then the applied to get the assertion. 
\end{proof} 

\begin{Corollary}
If $f(z)$ is analytic on the whole $\R - i(0,1/2)$ additionally, then 
the boundary operator $f$ is $\tau$-measurable and we have 
\[ 
\tau(f^*f)) = (f\tau^{1/2} | f\tau^{1/2}) = \int_\R \bigl(f(t-i/2)|f(t-i/2)\bigr)\, dt. 
\] 
\end{Corollary}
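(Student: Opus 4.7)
The plan is to deduce the corollary as an immediate specialization of the Trace Formula Theorem to the case where the interpolator has no interior singularities. Recall that in the theorem's setup the virtual operator $V_f$ of $f(z)$ is decomposed as $V_f = f + R_f$, where the residue operator $R_f$ is defined by a contour integral surrounding a compact singular set $K \subset \R - i(0,1/2)$ of $f(z)$. Under the additional hypothesis of the corollary, $f(z)$ is holomorphic on the entirety of $\R - i(0,1/2)$, so there is no singular set; equivalently, one may take $K$ to be any compact set across which the integrand remains holomorphic.

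The first step is to check that $R_f = 0$ under this hypothesis. For every $g \in \sN$ and every $t \in \R$, the integrand
\[
z \mapsto f(z)\, g(t-z-i/2) = f_\phi(z)\,\phi^{it+1/2}\, {}_\phi g(t-z-i/2) \in M(it + 1/2)
\]
is norm-analytic throughout $\R - i[0,1/2]$, being the product of the now-everywhere-analytic $f_\phi(z)$ and the half-analytic section $g$ of Gaussian decay. Cauchy's theorem, in its Banach-space-valued form, therefore forces the contour integral $\oint f(z)g(t-z-i/2)\, dz$ to vanish. Integrating over $t \in \R$ gives $R_f(g\tau^{1/2}) = 0$ for every $g \in \sN$, hence $R_f = 0$ as an operator on $\sN\tau^{1/2}$.

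With $R_f = 0$ the conclusion of the Trace Formula Theorem collapses to exactly
\[
\tau(f^*f) = \tau((f+R_f)^*(f+R_f)) = (f\tau^{1/2} | f\tau^{1/2}) = \int_\R \bigl(f(t-i/2)|f(t-i/2)\bigr)\, dt,
\]
and the $\tau$-measurability of $f$ is likewise inherited from that of $f + R_f = f$. There is essentially no obstacle beyond the routine verification of the vanishing of the residue integral; all other ingredients are supplied directly by the theorem. The only care needed is to ensure that Cauchy's theorem applies to an $M(it+1/2)$-valued holomorphic integrand, which is justified by the norm-analyticity noted above together with the Gaussian decay (ensuring absolute convergence of all contour and line integrals uniformly on the relevant strips).
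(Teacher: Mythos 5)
Your proposal is correct and is exactly the intended argument: the paper states this corollary without proof as the special case of the Trace Formula Theorem in which the singular set $K$ is empty, so that $R_f=0$ (by vector-valued Cauchy's theorem, as you verify) and $V_f = f$. Nothing further is needed.
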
 

\begin{Example}
Let $G \in L^2(\R)$ and suppose that 
its Fourier transform $\widehat G(\lambda) = \int_\R G(t) e^{-i\lambda t}\, dt$ is integrable and satisfies 
$\int_0^\infty |\widehat G(\lambda)|^2 e^{\lambda}\, d\lambda < \infty$. 

Then the inverse Fourier transform $G_w$ of $\widehat G(\lambda) e^{iw\lambda}$ belongs to 
$L^2(\R) \cap C_0(\R)$ and 
depends on $w \in \R -i[0,1/2]$ norm-continuously for both $\|\cdot\|_\infty$ and $\|\cdot\|_2$. 
Since, for $F \in L^2(\R)$, 
\[ 
(F|G_w) = \frac{1}{2\pi} \int_\R \overline{\widehat F(\lambda)} {\widehat G}(\lambda)e^{iw\lambda}\, d\lambda
\] 
is analytic in $w$ and $G_s$ is reduced to the translation $G(t+s)$ of $G(t)$, 
$F(t)$ is analytically extended to $F(z)$ so that 
$G_w(t) = G(t+w)$ for $w \in \R - i[0,1/2]$ and $t \in \R$. 

Now, for $\phi \in M_*^+$, $g(z) = G(z) \phi^{it}$ defines an interpolator on $(0,1/2)$ which vanishes 
at $\Re z = \pm\infty$. 
Since $\phi^{it}$ on $\sH$ is given by translation on $L^2(\R)\otimes [\phi]L^2(M)[\phi] 
\cong [\phi]\sH[\phi]$, the associated boundary operator is bounded and the boundary vector is given by 
$\oint_\R G(t-i/2) \phi^{it + 1/2}\, dt$ so that
\begin{align*} 
\tau(g^*fg) &= \int_{\R^2} (g(t-i/2)|f(s) g(t-s-i/2))\, dsdt\\ 
&= \phi(1) \int_{\R^2} \overline{G(t-i/2)} F(s) G(t-s-i/2)\, dsdt\\ 
&= \frac{\phi(1)}{2\pi} \int_\R \widehat F(\lambda) |\widehat G(\lambda)|^2 e^\lambda\, d\lambda. 
\end{align*} 
Here, for $F \in L^1(\R)$, an $L^1$-section $\{ f(t)\} $ of $\{ M(it)\}$ is defined by 
$f(t) = F(t) \phi^{it}$ and $f = \int_\R f(t)\, dt \in N$. 

Thus, letting $A$ be the W*-subalgebra of $[\phi]N[\phi]$ generated by $\{ \phi^{it}; t \in \R\}$, 
$L^2(A,\tau)$ is identified with $L^2(\R, e^\lambda\,d\lambda)$ by a unitary map
\[ 
U_\phi: L^2(A,\tau) \ni g\tau^{1/2}) \mapsto \sqrt{\frac{\phi(1)}{2\pi}} \widehat G(\lambda) 
\in L^2(\R, e^\lambda\,d\lambda)
\] 
so that $\phi^{is}$ on $L^2(A,\tau)$ is realized by a multiplication of 
the function $e^{-is\lambda}$ of $\lambda \in \R$. 
\end{Example}

\begin{Example}\label{negative}
For $-1/2 < \Re\beta < 0$ and $\phi \in M_*^+$, the interpolator $f(z) = \frac{1}{\beta + iz} \phi^{iz}$ 
has $-2\pi(1\wedge \phi)^{-\beta} \in N$ as the boundary operator. The residue operator is calculated 
by the realization $L^\infty(A)$ on $L^2(A)$ as 
\[ 
\int_{|z-i\beta| = \epsilon} \frac{1}{\beta + iz} e^{i\lambda z}\, dz = 2\pi e^{-\beta \lambda}, 
\] 
which is therefore $2\pi \phi^{-\beta}$. Adding these, we see that $(1\vee \phi)^{-\beta}$ is 
in the Hilbert-Schmidt class and hence, for $x \in M$ and $\mu = -r + is \in - (0,1) + i\R$, 
$x(1\vee \phi)^{-\mu} = x(1\vee \phi)^{r/2-is} (1\vee \phi)^{r/2}$ is in the trace class with 
\begin{align*} 
2\pi \tau(x(1\vee \phi)^{-\mu}) 
&= \phi(x) \int_\R \frac{1}{-it + (1-r)/2} \frac{1}{i(t+s) + (1-r)/2}\, dt\\ 
&= \frac{\phi(x)}{is - r + 1} = \frac{\phi(x)}{\mu + 1}. 
\end{align*}  
\end{Example}




Although Haagerup deals only with the case $\mu=0$ and its scaled variation, the following generalization 
should also be attributed to him. 

\begin{Theorem}[Haagerup's Trace Formula] 
Let $\omega$ be a weight on $M$ in our sense. 
The trace of a positive operator $(1\vee \omega)^{-\mu}$ with $\mu \in \R$, 
which belongs to $N$ for $\mu \geq 0$ and 
is affiliated to $N$ for $\mu < 0$, is given by 
\[ 
\tau( (1\vee \omega)^{-\mu} ) 
= \begin{cases} 
\frac{\omega(1)}{2\pi(\mu + 1)} &\text{if $\mu > -1$,}\\
\infty &\text{otherwise.} 
\end{cases}
\] 

Moreover, when $\omega \in M_*^+$, for any $x \in M$ and $\mu \in (-1,\infty) + i\R$, 
the $\tau$-measurable operator $x(1\vee\omega)^{-\mu}$ is in the trace class 
and we have 
\[ 
\tau(x(1\vee \omega)^{-\mu}) = \frac{\omega(x)}{2\pi(\mu + 1)}. 
\] 
\end{Theorem}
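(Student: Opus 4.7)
The plan is to reduce both assertions to the case of a single functional $\omega \in M_*^+$, invoke Example~\ref{negative} for the formula on the strip $-1 < \Re\mu < 0$, and extend to $\Re\mu \geq 0$ by a direct application of the polarized Trace Formula; divergence for $\mu \leq -1$ will then follow by monotone convergence. The reduction is immediate: for a weight $\omega = \sum_j \omega_j$ with orthogonal supports, the decomposition $(1\vee\omega)^{-\mu} = \sum_j (1\vee\omega_j)^{-\mu}$ is orthogonal inside $N$, so normality of $\tau$ together with $\omega(1) = \sum_j \omega_j(1)$ carries both conclusions from the single-functional version to arbitrary weights.

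For $\omega \in M_*^+$ and $\Re\mu \geq 0$, write $\mu = \alpha + \beta$ with $\Re\alpha,\Re\beta \geq 0$ (for instance $\alpha = \beta = \mu/2$), and consider the two interpolators
\[
f(z) = \frac{1}{\bar\alpha + iz}\,\omega^{iz}, \qquad g(z) = \frac{1}{\beta + iz}\,x\omega^{iz}
\]
on $(0,1/2)$; both are analytic in the strip (their poles at $\Im z = \Re\alpha$ and $\Im z = \Re\beta$ lie on or above the top boundary), with boundary operators $V_f = 2\pi(1\vee\omega)^{-\bar\alpha}$ and $V_g = 2\pi x(1\vee\omega)^{-\beta}$. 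Each is Hilbert--Schmidt by the Corollary to the Trace Formula applied individually, so $V_f^* V_g = 4\pi^2(1\vee\omega)^{-\alpha} x (1\vee\omega)^{-\beta}$ is trace class and equals $4\pi^2 x(1\vee\omega)^{-\mu}$ by cyclicity of $\tau$. Polarization of the Trace Formula then yields
\[
4\pi^2\,\tau\bigl(x(1\vee\omega)^{-\mu}\bigr) = \tau(V_f^* V_g) = (f\tau^{1/2}\,|\,g\tau^{1/2}) = \int_\R \frac{\omega(x)\, dt}{(\alpha - it + 1/2)(\beta + it + 1/2)} = \frac{2\pi\,\omega(x)}{\mu + 1},
\]
using $(\omega^{it+1/2}\,|\,x\omega^{it+1/2}) = \omega(x)$ (from the trivialization $M(it+1/2) = L^2(M)\omega^{it}$ by the right action of a unitary) and a routine residue calculation closing the contour in the upper half-plane. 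Combined with Example~\ref{negative} for $-1 < \Re\mu < 0$, this establishes the formula throughout $\Re\mu > -1$.

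For real $\mu \leq -1$, divergence follows by monotone convergence: as $\epsilon \to 0^+$, $(1\vee\omega)^{1-\epsilon} \nearrow (1\vee\omega)^1$ gives
\[
\tau\bigl((1\vee\omega)^1\bigr) = \lim_{\epsilon \to 0^+} \frac{\omega(1)}{2\pi\epsilon} = +\infty,
\]
and spectral domination $(1\vee\omega)^{-\mu} \geq 1\vee\omega$ for $\mu < -1$ propagates the infinity. The main obstacle is the polarization step: strictly speaking, the Trace Formula is stated as a quadratic identity $\tau(V^*V) = (f\tau^{1/2}|f\tau^{1/2})$, so extracting the bilinear $\tau(V_f^*V_g)$ for distinct interpolators $f, g$ requires a standard but careful polarization in the Hilbert algebra setting, and the evaluation of the resulting integral by residues must be justified via the sub-gaussian growth that interpolators of this explicit rational form enjoy.
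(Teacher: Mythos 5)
Your argument is correct in substance, but for the range $\Re\mu \geq 0$ it takes a genuinely different route from the paper's. The paper disposes of the scalar formula by transporting everything to the abelian subalgebra $A$ generated by $\{\omega^{it}\}$: via the unitary $U_\omega : L^2(A,\tau) \to L^2(\R, e^\lambda\,d\lambda)$ constructed in the example preceding Example~\ref{negative}, the operator $(1\vee\omega)^{-\mu}$ becomes multiplication by $1_{(-\infty,0]}(\lambda)e^{\mu\lambda}$, so both the value $\frac{\omega(1)}{2\pi(\mu+1)}$ for $\Re\mu>-1$ and the divergence for $\mu\leq -1$ are read off a single elementary integral (no monotone-convergence step is needed), and the statement about $x(1\vee\omega)^{-\mu}$ is referred wholesale to Example~\ref{negative}. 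Your factorization $4\pi^2\,x(1\vee\omega)^{-\mu} = V_f^*V_g$ into Hilbert--Schmidt boundary operators, followed by polarization of Lemma~\ref{HS} and the residue evaluation of $\int_\R \frac{dt}{(\alpha-it+1/2)(\beta+it+1/2)} = \frac{2\pi}{\mu+1}$, is heavier for the scalar case but buys a uniform treatment of $\tau(x(1\vee\omega)^{-\mu})$ on all of $\Re\mu\geq 0$, a range that Example~\ref{negative} (which literally covers only $-1<\Re\mu<0$) leaves implicit in the paper. One wrinkle you should repair: for general $x\in M$ your $g(z)=\frac{1}{\beta+iz}\,x\,\omega^{iz}$ is only a \emph{left} interpolator --- the interrelating condition would require $\sigma^\omega_z(x)$ to exist for nonreal $z$ --- so the Corollary to the Trace Formula cannot be ``applied individually'' to $g$. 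This is harmless: writing $g_0(z)=\frac{1}{\beta+iz}\omega^{iz}$, which is a bona fide interpolator, one has $V_g = x\,V_{g_0}$, and a bounded operator times a Hilbert--Schmidt operator is Hilbert--Schmidt with $xV_{g_0}\tau^{1/2} = x\,(V_{g_0}\tau^{1/2}) = \frac{1}{2\pi}\oint_\R \frac{1}{\beta+it+1/2}\,x\,\omega^{it+1/2}\,dt$, which is exactly the vector your inner-product computation uses; the polarization $\tau(V_f^*V_g) = (V_f\tau^{1/2}\,|\,V_g\tau^{1/2})$ then follows at once from Lemma~\ref{HS}. With that adjustment, and with $(\omega^{it+1/2}\,|\,x\,\omega^{it+1/2})=\omega(x)$ as you note, your computation is sound and agrees with the paper's conclusion.
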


\begin{proof}
Assume $\omega \in M_*^+$. Then $\omega^{it}$ is realized as a multiplication operator 
on $L^2(\R,e^\lambda\,d\lambda)$ by a function $e^{-it\lambda}$ of $\lambda \in \R$. 
Consequently $(1\vee \omega)^{-\mu}$ is represented by the function 
$1_{(-\infty,0]}(\lambda) e^{\lambda\mu}$ of $\lambda$, which is integrable relative to 
the measure $e^\lambda\,d\lambda$ if and only if $\Re\mu > -1$ with 
\[ 
\int_{-\infty}^0 e^{\lambda\mu} e^\lambda\, d\lambda = \frac{1}{\mu + 1}. 
\] 

Since our weights are orthogonal sums of elements in $M_*^+$, the formula for $\omega \in M_*^+$ 
remains valid for weights. 

The remaining part is already covered in Example~\ref{negative}. 
\end{proof}

\begin{Remark}~ 
\begin{enumerate}
\item
By the integral expression 
$\displaystyle \int_\R \frac{1}{\mu+it} \omega^{it}\, dt$ of $2\pi(1\vee \omega)^{-\mu}$, 
the formula coincides with the one obtained from the formal argument. 
\item 
The normalization of our trace is different from that in \cite{H} and \cite{Terp} 
by a factor $2\pi$. 
\end{enumerate}
\end{Remark}

Thus, for $\omega \in M_*^+$, the analytic generator $h$ of $\omega^{it}$ as a positive operator on 
$\sH$, which satisfies $\theta_s(h) = e^{-s}h$ (called relative invariance of degree $-1$), 
is $\tau$-measurable in the sense that $\lim_{r \to \infty} \tau([r\vee h]) = 0$. 
Haagerup's ingeneous observation is that the whole $L^p(M)$'s are captured 
as measurable operators on $\sH$ satisfying relative invariance of degree $-1/p$. 

We now go into the reverse problem of characterizing $\tau$-measurable positive operators satisfying 
relative invariance of degree $-1$, which is the heart of Haagerup's correspondence. 

Recall the original approach to this problem:
First establish a one-to-one correspondence between normal weights on $M$ and 
$\theta$-invariant normal weights on $N$. Second the latter is then paraphrased into 
positive operators of relative invariance of degree $-1$ 
by taking Radon-Nikodym derivative with respect to $\tau$. 
Finally, positive operators associated to $M_*^+$ are characterised as $\tau$-measurable 
operators among these. 

Formally the whole processes look natural and seem harmless 
but it is in fact supported by clever and effective controls over infinities 
based on extended positive parts. 

We shall here present an inelegant but down-to-earth proof by continuing elementary Fourier calculus.

\section{Haagerup Correspondence}
Let $h \geq 0$ be a $\tau$-measurable operator on $\sH$ satisfying $\theta_s(h) = e^{-s}h$ for $s \in \R$. 
Our first task here is to identify $h^{it}$ with $\varphi^{it}$ for some $\varphi \in M_*^+$.  

Let $e = [1 \vee h]$ be the support projection of $1\vee h$. 
By the relative invariance of $h$, $\theta_s(e)$ is the support projection of 
$e^s\vee h$ and we have a Stieltjes integral representation of $h$ 
\[ 
h = - \int_{-\infty}^\infty e^s\, d\theta_s(e) = \int_{-\infty}^\infty e^{-s} d\theta_{-s}(e) 
\] 
and set 
\[ 
(1\vee h)^{-\mu} = - \int_0^\infty e^{-\mu s}\, d\theta_s(e),  
\]
which is $\tau$-measurable for any $\mu \in \C$ in view of $\tau(e) < \infty$. 
Notice that $\theta_s(e)$ is continuous in $s \in \R$ and $d\theta_s$ has no spectral jumps. 

Let $x \in M$ and start with the computation
\begin{align*} 
\tau( hx (1\vee h)^{-\mu} ) 
&= \tau( x (1\vee h)^{-\mu}h )
= \tau( x (1\vee h)^{1-\mu} ) \\ 
&= - \int_0^\infty e^{(1-\mu)s} d\tau( x\theta_s(e) ) 
= - \int_0^\infty e^{(1-\mu)s} d(e^{-s}) \tau(xe)\\ 
&= \tau(xe) \int_0^\infty e^{-\mu s} ds
= \frac{1}{\mu} \tau(xe),  
\end{align*}
which is valid for $\Re \mu > 0$. 

For $t \in \R$, $\sigma_t(x) = h^{it}xh^{-it}$ ($x \in M$) defines an automorphic action of $\R$ on $M$ 
because $h^{it}xh^{-it}$ is $\theta$-invariant in view of $\theta_s(h^{it}) = e^{-ist} h^{it}$. 
We claim that $\varphi(x) = 2\pi \tau(xe)$ satisfies the KMS-condition for 
the automorphic action $\sigma_t$. 

First notice that $[h] = [\varphi]$. 
In fact, from the definition of $\varphi$ 
and the faithfulness of the standard trace, $(1-[\varphi])e = 0$, which means that 
$e \leq [\varphi]$ and then $[h] = \lim_{s \to -\infty} \theta_s(e) \leq \theta_s([\varphi]) = [\varphi]$. 
Conversely, from $(1-[h])e = 0$, $1-[h] \leq 1 - [\varphi]$ gives the reverse inequality. 
 
Now consider $\varphi(x^*\sigma_t(x)) = \tau(x^*h^{it}xh^{-it} e)$ 
with $x \in M$. 
If the Stieltjes integral expression for $h$ is used as in  
$\displaystyle xh^{-it} e = - \int_0^\infty e^{-ist} d\theta_s(xe)$, we have 
\begin{align*}
-\frac{1}{2\pi} \varphi(x^*\sigma_t(x)) &= \int_0^\infty e^{-ist} d \tau( x^* h^{it} \theta_s(xe) )\\
&= \int_0^\infty e^{-ist} d \tau\Bigl( \theta_s\bigl(x^* \theta_{-s}(h^{it}) xe\bigr) \Bigr)\\ 
&= \int_0^\infty e^{-ist} d(e^{-s} e^{ist}) \tau(x^* h^{it} xe) = (it-1) \tau(^* h^{it} xe) 
\end{align*} 
and then 
\[ 
- \tau( x^* h^{it} xe ) 
= \int_{-\infty}^0 e^{ist} d \tau\Bigl( x^* \theta_s(e) xe \Bigr) 
+ \int_0^\infty e^{ist} d \tau\Bigl( x^* \theta_s(e) xe \Bigr),  
\] 
together with 
\begin{gather*} 
\int_0^\infty e^{ist} d \tau\Bigl( x^* \theta_s(e) xe \Bigr) 
= \int_0^\infty e^{ist} d\Bigl(e^{-s} \tau\bigl( x^* e x\theta_{-s}(e) \bigr) \Bigr)\\
= \int_0^\infty e^{ist} e^{-s} d\tau\Bigl( x^* e x\theta_{-s}(e) \Bigr)
- \int_0^\infty e^{ist} e^{-s} \tau\bigl( x^* e x\theta_{-s}(e) \bigr)\,ds,  
\end{gather*}
reveals that $-\tau(x^*h^{it}xe)$ is analytically extended to a bounded continuous function
\begin{gather*} 
- \tau( x^* h^{iz} xe )
= \int_{-\infty}^\infty e^{isz} d \tau\Bigl( x^* \theta_s(e) xe \Bigr)
= \int_{-\infty}^\infty e^{isz} d\Bigl(e^{-s} \tau( x^*ex\theta_{-s}(e) )\Bigr)\\ 
= \int_{-\infty}^\infty e^{isz} e^{-s} d\tau\Bigl( x^*ex\theta_{-s}(e) \Bigr)
- \int_{-\infty}^\infty e^{isz} e^{-s} \tau\bigl( x^*ex\theta_{-s}(e) \bigr)\,ds. 
\end{gather*}
of $z = t-ir \in \R - i[0,1]$. 

In these and the following calculations, note 
that $\tau( x^*ex\theta_{-s}(e) )$ ($\tau( x^* \theta_s(e) xe )$) 
is positive, increasing (decreasing) and continuous in $s \in \R$, 
whence both $d\tau\Bigl( x^*ex\theta_{-s}(e) \Bigr)$ 
and $-d\tau\Bigl( x^* \theta_s(e) xe \Bigr)$ give rise to positive finite measures on $\R$. 

Consequently, with the notation $\varphi(x^*\sigma_z(x))$ for 
the analytic continuation of $\varphi(x^*\sigma_t(x))$ and,  
with the help of integration-by-parts, we get the expression 
\begin{align*}
&\frac{1}{2\pi} \varphi(x^*\sigma_{t-ir}(x)) 
= (it + r-1)\int_{-\infty}^\infty e^{(it + r -1)s} d\tau\Bigl( x^*ex\theta_{-s}(e) \Bigr)\\ 
&\hspace{4cm}- (it + r-1)\int_{-\infty}^\infty 
e^{(it + r -1)s} \tau\bigl( x^*ex\theta_{-s}(e) \bigr)\,ds\\
&= (it + r) \int_{-\infty}^\infty e^{(it + r -1)s} d\tau\Bigl( x^*ex\theta_{-s}(e) \Bigr) 
- \Bigl[ e^{(it + r -1)s} \tau\bigl( x^*ex\theta_{-s}(e) \bigr) \Bigr]_{-\infty}^\infty. 
\end{align*}
For $0 < r < 1$, we see  
$\displaystyle \lim_{s \to \infty} e^{(it + r -1)s} \tau\bigl( x^*ex\theta_{-s}(e) \bigr) = 0$ and 
\[
\lim_{s \to -\infty} e^{(it + r -1)s} \tau\bigl( x^*ex\theta_{-s}(e) \big) 
= \lim_{s \to -\infty} e^{(it + r)s} \tau\bigl( x^*\theta_s(e)xe \bigr) = 0
\]
at the boundary values and therefore
\[ 
\frac{1}{2\pi} \varphi(x^*\sigma_{t-ir}(x)) = (it + r) 
\int_{-\infty}^\infty e^{(it + r -1)s} d\tau\Bigl( x^*ex\theta_{-s}(e) \Bigr). 
\] 
Since both sides are continuous in $r \in [0,1]$, the equality holds at the 
boundary as well. 
We now compare this expression with 
\begin{align*}
\frac{1}{2\pi} \varphi(\sigma_t(x)x^*) &= \tau( e h^{it}x h^{-it} x^* ) 
= - \int_0^\infty e^{ist} d \tau\Bigl( \theta_s(e) xh^{-it} x^* \Bigr)\\ 
&= - \int_0^\infty e^{ist} d \tau\Bigl( \theta_s\bigl( e x\theta_{-s}(h^{-it}) x^* \bigr) \Bigr)\\ 
&= - \int_0^\infty e^{ist} d(e^{-s-ist}) \tau(e x h^{-it} x^*) 
= (it+1) \tau(e x h^{-it} x^*)\\ 
&= (it+1) \int_{-\infty}^\infty e^{ist} d\tau\Bigl( ex \theta_{-s}(e) x^* \Bigr) 
\end{align*} 
to conclude that $\varphi(x^*\sigma_{t-i}(x)) = \varphi(\sigma_t(x) x^*)$ for $t \in \R$. 

So far we have checked that $h^{it}xh^{-it} = \varphi^{it}x\varphi^{-it}$ for $x \in [\varphi]M[\varphi]$. 
Then $u(t) = h^{it}\varphi^{-it}$ is a unitary in the center of $[\varphi]M[\varphi]$. 
Since each $\varphi^{it}$ commutes with the reduced center, 
$\{ u(t) \}$ is a one-parameter group of unitaries in the reduced algebra. 
Let $u(t) = \int_\R e^{ist}\, E(ds)$ be the spectral decomposition in $[\varphi]M[\varphi]$. 
Then $a_n = \int_{[-n,n]} e^{s/2}\, E(ds)$ is an increasing sequence of positive elements
in the reduced center and $\varphi_n = a_n\varphi a_n \in M_*^+$ satisfies 
$\varphi_n^{it} = h^{it}[a_n] = [a_n]h^{it}$ for $t \in \R$. 
Set $h_n = h[a_n] = [a_n]h$, 
which is also $\tau$-measurable and satisfies $\theta_s(h_n) = e^{-s} h_n$. 
From the equalities 
\[ 
\frac{\varphi_n(x)}{2\pi\mu} = 
\tau( x(1\vee \varphi_n)^{1-\mu} ) 
= \tau( x(1\vee h_n)^{1-\mu} ) 
= \tau( x[a_n]  (1\vee h)^{1-\mu} ) 
= \frac{\varphi(x[a_n])}{2\pi\mu} 
\]
for $x \in M$ and $\mu \geq 1$, one sees that $\varphi_n = \varphi[a_n] = [a_n]\varphi$ 
and then $\varphi_n^{it} = \varphi^{it}[a_n]$ for $t \in \R$. Finally we have 
$h^{it} = \lim_{n \to \infty} h_n^{it} = \lim_{n \to \infty} \varphi^{it} [a_n] = \varphi^{it}$. 

We next check the additivity of the correspondence $h_\varphi \leftrightarrow \varphi$. 
To see this, we first establish the following relation. 

\begin{Lemma}\label{averaging}
Let $\omega \in M_*^+$ and $\mu>0$. Then  
\[ 
(1\vee \omega)^{-\mu} = \frac{1}{2\pi} \int_\R  \frac{1}{\mu + it} \omega^{it}\, dt 
\]
is in the $\tau$-trace class 
and, for $x \in [\varphi]M$,  we have
\[ 
\tau( h x^*(1\vee \omega)^{-\mu}x ) = \frac{1}{2\pi\mu} \varphi(x^*x).  
\] 
\end{Lemma}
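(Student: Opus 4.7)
The integral representation and trace-class property of $(1\vee\omega)^{-\mu}$ should follow immediately from earlier results. The representation $(1\vee\omega)^{-\mu} = \frac{1}{2\pi}\int_\R\frac{\omega^{it}}{\mu+it}\,dt$ is a direct instance of the boundary-operator computation at the end of the previous section (Example with the interpolator $f(z) = \frac{1}{\mu+iz}\omega^{iz}$ on $(0,1/2)$, whose boundary operator was identified as $2\pi(1\vee\omega)^{-\mu}$). The trace-class property is an immediate consequence of Haagerup's Trace Formula just proven: $\tau((1\vee\omega)^{-\mu}) = \omega(1)/(2\pi(\mu+1)) < \infty$ for $\mu > 0$.

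For the main identity, the plan is to first apply trace cyclicity to write $\tau(hx^*(1\vee\omega)^{-\mu}x) = \tau(xhx^*(1\vee\omega)^{-\mu})$. The operator $xhx^*$ is positive and $\tau$-measurable with $\theta_s(xhx^*) = e^{-s}xhx^*$, and under the Haagerup correspondence being constructed it should be identified with the functional $\psi \in M_*^+$ defined by $\psi(y) = \varphi(x^*yx)$, so that $\psi(1) = \varphi(x^*x)$. The task would then reduce to establishing $\tau(h'(1\vee\omega)^{-\mu}) = \psi(1)/(2\pi\mu)$, where $h'$ denotes the positive $\tau$-measurable operator corresponding to $\psi$. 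When $\omega = \psi$, this is an immediate specialization of the earlier computation $\tau(hy(1\vee h)^{-\mu}) = \varphi(y)/(2\pi\mu)$ to $y = 1$, applied with $(h',\psi)$ in place of $(h,\varphi)$.

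The hard part is the general $\omega$, which expresses the \emph{averaging} phenomenon signalled by the name of the lemma: the right-hand side is independent of $\omega$. A naive substitution of the integral representation combined with $\theta_s$-scaling would give $\tau(h'\omega^{it}) \equiv 0$ pointwise for real $t\ne 0$, showing that the trace-integral interchange is not valid at face value and that the integrand is only a distributional pairing. To handle this I would approximate $(1\vee\omega)^{-\mu}$ via its Stieltjes representation $(1\vee\omega)^{-\mu} = -\int_0^\infty e^{-\mu s}\,d\theta_s([1\vee\omega])$ (modeled on the formula derived for $(1\vee h)^{-\mu}$ in the previous section), and use analytic continuation of $\omega^{iz}$ in the lower half-strip together with the relative modular cocycle $\omega^{it}\varphi^{-it} \in M$ and the identity $\varphi^{it} = h^{it}$ to evaluate the truncated traces. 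Passing to the limit then yields the asserted $\omega$-independent value $\psi(1)/(2\pi\mu) = \varphi(x^*x)/(2\pi\mu)$, completing the proof.
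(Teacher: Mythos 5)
Your first paragraph is fine and matches the paper, which indeed just recalls the boundary-operator example and the Haagerup trace formula for the integral representation and trace-class claims. The rest, however, has two problems. First, a circularity: you reduce to $\tau\bigl(xhx^*(1\vee\omega)^{-\mu}\bigr)$ and identify $xhx^*$ with $h_\psi$ for $\psi(y)=\varphi(x^*yx)$. At this stage of the paper only the bare correspondence ``positive relatively invariant $\tau$-measurable $h\mapsto\varphi$ with $h^{it}=\varphi^{it}$'' is available; the compatibility $ah_\varphi a^*=h_{a\varphi a^*}$ is proved \emph{from} this very lemma in the Haagerup correspondence theorem, so you cannot assume it here. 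Even granting the reduction, you would still need $\psi(1)=\varphi(x^*x)$ and, for general $\omega$, the full averaging computation, so the detour buys nothing.

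Second, and more seriously, the actual content of the lemma --- the evaluation for arbitrary $\omega$ --- is left as an unexecuted plan. You correctly observe that the naive interchange fails because $\theta$-scaling forces $\tau(h'\omega^{it})=0$ for $t\neq 0$; that is exactly why the proof is delicate. But ``use analytic continuation and the cocycle $\omega^{it}\varphi^{-it}$ to evaluate the truncated traces and pass to the limit'' does not identify the mechanism. The paper's argument runs as follows: write $h=-\int e^s\,d\theta_s(e)$, integrate by parts over $[-n,n]$, and use $\tau(\theta_s(e)x^*\omega^{it}x)=e^{ist}\tau(ex^*\omega^{it}x)$ to reduce everything to the function $t\mapsto\tau(ex^*\omega^{it}x)$. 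A sub-lemma computes this as $\frac{1}{2\pi(1-it)}\varphi(x^*\omega^{it}x\varphi^{-it})$, giving integrability of $\tau(ex^*\omega^{it}x)/(\mu+it)$ so the boundary term dies by Riemann--Lebesgue. The surviving double integral $\int_{-n}^{n}ds\int_\R dt\,\frac{e^{ist}}{\mu+it}\tau(ex^*\omega^{it}x)$ is then handled by writing $\frac{2\pi}{\mu+it}=(g^**g)(t)$ with $g(t)=1/(it+\mu/2)$, recognizing the $t$-integral as $\sum_j|\widehat{F_j}(s)|^2$ for $F_j(t)=g(t)(\delta_j|\omega^{it}xe\tau^{1/2})$, and applying Plancherel to obtain $\frac{2\pi}{\mu}\tau(ex^*x)=\frac{1}{\mu}\varphi(x^*x)$. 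This autocorrelation--Plancherel step is the key idea that makes the $\omega$-dependence disappear, and it is absent from your proposal.
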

Recall here that
$\displaystyle (1\vee \omega)^{-\mu/2} = \frac{1}{2\pi} \int_\R \frac{1}{it + \mu/2} \omega^{it}\, dt$ 
belongs to $B_+$ in such a way that 
\[ 
(1\vee \omega)^{-\mu/2} \tau^{1/2} 
= \frac{1}{2\pi} \oint_\R \frac{1}{it + (\mu+1)/2} \omega^{it + 1/2}\, dt.  
\] 

The identity is checked as follows: Letting $y = x^* (1\vee \omega)^{-\mu} x$, 
we have 
\begin{align*} 
\tau(hy) &= - \lim_{n \to \infty} 
\int_{-n}^n e^s d \tau\bigl(\theta_s(e)y\bigr)\\ 
&= \lim_{n \to \infty} 
\left( \int_{-n}^n e^s \tau\bigl(\theta_s(e)y\bigr)\, ds - e^n \tau\bigl(\theta_n(e)y\bigr) 
+ e^{-n} \tau\bigl(\theta_{-n}(e)y\bigr) \right)\\
&= \lim_{n \to \infty} \left(
\int_{-n}^nds\, e^s \int_\R dt\, \frac{1}{2\pi(\mu + it)} \tau\bigl(\theta_s(e)x^*\omega^{it} x \bigr)
- \tau\bigl( e\theta_{-n}(y) \bigr) \right)\\
&= \lim_{n \to \infty} \left(
\int_{-n}^nds\, \int_\R dt\, \frac{e^{ist}}{2\pi(\mu +it)} \tau(ex^*\omega^{it} x) 
- \frac{1}{2\pi} \int_\R \frac{e^{int}}{\mu + it} \tau(e x^* \omega^{it} x)\, dt 
\right). 
\end{align*}

By the lemma below, the function $\tau(ex^*\omega^{it} x)/(\mu+it)$ is integrable, whence 
\[ 
\lim_{n \to \infty}  \int_\R \frac{e^{int}}{\mu + it} \tau(e x^* \omega^{it} x)\, dt = 0. 
\]

\begin{Lemma}
We have 
\[ 
\tau(ex^*\omega^{it} x) = \frac{1}{2\pi(1-it)} \varphi(x^*\omega^{it}x\varphi^{-it}). 
\] 
\end{Lemma}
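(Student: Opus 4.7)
My plan is to mirror the Stieltjes-integral technique that was already deployed in the KMS verification just above the lemma, but now with $\omega^{it}$ in place of $h^{it}=\varphi^{it}$.

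First I would rewrite the right-hand side using the definitions already in hand: since $\varphi(y)=2\pi\tau(ey)$ and $\varphi^{-it}=h^{-it}$, the target identity becomes
\[
\tau(ex^*\omega^{it}x)=\frac{1}{1-it}\,\tau(x^*\omega^{it}x\,h^{-it}e).
\]
So it suffices to show $\tau(x^*\omega^{it}x\,h^{-it}e)=(1-it)\,\tau(ex^*\omega^{it}x)$. To do this I invoke the Stieltjes representation $xh^{-it}e=-\int_0^\infty e^{-its}\,d\theta_s(xe)$ used earlier in the KMS argument (it follows from $\theta_s(e)=\chi_{\{h\ge e^s\}}$ and $h^{-it}e$'s spectral decomposition), obtaining
\[
\tau(x^*\omega^{it}x\,h^{-it}e)=-\int_0^\infty e^{-its}\,d_s\!\left[\tau\bigl(x^*\omega^{it}x\,\theta_s(e)\bigr)\right],
\]
where I used the $\theta$-invariance $\theta_s(x)=x$ for $x\in M=N^\theta$.

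The key computational step is to simplify the scalar function $s\mapsto \tau(\theta_s(e)\,x^*\omega^{it}x)$. Using $\tau\circ\theta_s=e^{-s}\tau$ and the scaling relation $\theta_{-s}(\omega^{it})=e^{ist}\omega^{it}$ (which is the defining property of the dual action on the unitary $\omega^{it}$), I get
\[
\tau\bigl(\theta_s(e)\,x^*\omega^{it}x\bigr)
=e^{-s}\tau\bigl(e\,\theta_{-s}(x^*\omega^{it}x)\bigr)
=e^{(it-1)s}\,\tau(ex^*\omega^{it}x).
\]
Substituting and differentiating the exponential, the remaining integral collapses to
\[
-\tau(ex^*\omega^{it}x)\,(it-1)\int_0^\infty e^{-s}\,ds=(1-it)\,\tau(ex^*\omega^{it}x),
\]
which is exactly what was needed.

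The only genuine obstacle is justifying the Stieltjes manipulation: one must confirm that the boundary terms vanish at $s\to\infty$ (which uses $\Re(it-1)=-1<0$) and that the integration-by-parts in the $d\theta_s(e)$ integral against the scalar factor $e^{-its}$ is legitimate. Both points are handled exactly as in the author's earlier calculations — the positive finite measure $-d\tau(\theta_s(e)y)$ (for $y=x^*\omega^{it}x$ split into positive parts via polarization) is finite because $\tau(e)<\infty$, and the exponential decay $e^{-s}$ dominates. Everything else is bookkeeping with the trace property and the $\theta$-scaling of $\tau$ and $\omega^{it}$.
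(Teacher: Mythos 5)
Your proof is correct, but it follows a genuinely different route from the paper's. You recycle the Stieltjes-integral and scaling bookkeeping from the first half of the section: starting from $xh^{-it}e=-\int_0^\infty e^{-ist}\,d\theta_s(xe)$, you use $\tau\circ\theta_s=e^{-s}\tau$ together with $\theta_{-s}(\omega^{it})=e^{ist}\omega^{it}$ to identify $\tau\bigl(\theta_s(e)\,x^*\omega^{it}x\bigr)=e^{(it-1)s}\tau(ex^*\omega^{it}x)$, after which the integral collapses to the factor $1-it$; combined with the definition $\varphi(y)=2\pi\tau(ye)$ and the already established identification $h^{it}=\varphi^{it}$, this is exactly the stated lemma. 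This is the same mechanism the author uses to compute $\varphi(x^*\sigma_t(x))$ and $\varphi(\sigma_t(x)x^*)$ earlier in the section, so the technical justifications (finiteness of the measures $d\tau(\theta_s(e)\,\cdot\,)$, interchange of $\tau$ with the spectral integral, which is legitimate because all approximants are supported under $e$ and $\tau(e\,\cdot\,e)$ is normal) are already on record. The paper instead works in the Hilbert--Schmidt picture: it writes $\tau(ex^*\omega^{is}x)=(xe\tau^{1/2}\,|\,\omega^{is}xe\tau^{1/2})$, substitutes the explicit boundary vector $e\tau^{1/2}=\frac{1}{2\pi}\oint_\R\frac{1}{it+1/2}\varphi^{it+1/2}\,dt$, and evaluates the resulting convolution integral $\int_\R\frac{dt}{(-it+1/2)(i(t-s)+1/2)}=\frac{2\pi}{1-is}$. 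Your version is more elementary (no $L^2$-realization of $e\tau^{1/2}$, no Fourier identity) and makes the factor $1-it$ visibly arise from the scaling weight $e^{-s}$ of $\tau$ beating against the phase $e^{ist}$ of $\omega^{it}$; the paper's version has the advantage of staying inside the boundary-vector calculus it has just set up and reuses elsewhere in the proof of the averaging lemma. One minor remark: your closing worry about boundary terms at $s\to\infty$ is moot --- once $\tau(x^*\omega^{it}x\,\theta_s(e))$ is identified as the smooth function $e^{(it-1)s}\tau(ex^*\omega^{it}x)$, no integration by parts is performed and the integral converges absolutely since the integrand has modulus $|it-1|\,e^{-s}$ up to a constant.
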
 

\begin{proof}
From the expression $\tau( e x^* \omega^{is}x ) = (x e\tau^{1/2}|\omega^{is}x e\tau^{1/2})$ with 
\[ 
\omega^{is}x e\tau^{1/2} = \frac{1}{2\pi} \oint_\R dt\, \frac{1}{i(t-s) + 1/2} 
\omega^{is}x\varphi^{-is} \varphi^{it + 1/2},  
\] 
\begin{align*} 
\tau(e x^*\omega^{is} x) 
&= \frac{1}{(2\pi)^2} 
\int_\R dt\, \frac{1}{-it + 1/2} \frac{1}{i(t-s) + 1/2} 
(x\varphi^{it + 1/2}| \omega^{is} x \varphi^{-is} \varphi^{it + 1/2})\\
&= \frac{1}{(2\pi)^2} 
\int_\R dt\, \frac{1}{-it + 1/2} \frac{1}{i(t-s) + 1/2} 
\varphi(x^*\omega^{is}x\varphi^{-is})\\ 
&= \frac{1}{2\pi} \frac{1}{1-is} \varphi(x^*\omega^{is}x\varphi^{-is}),  
\end{align*}
\end{proof}

To deal with the first term in the last expression of $\tau(hy)$, 
we use the relation $2\pi (\mu + it)^{-1} = g^**g$ for $g(t) = 1/(it + \mu/2)$ to see that 
\[
\int_\R \frac{e^{ist}}{\mu + it} \omega^{it}\, dt
= \int_\R dt'\, e^{-ist'} \overline{g(t')} \omega^{-it'} \int_\R dt\, e^{ist} g(t) \omega^{it} 
\] 
and hence 
\begin{align*}
2\pi \int_\R \frac{e^{ist}}{\mu + it} &(x\xi_0|\omega^{it} x\xi_0)\, dt\\   
&= (\int_\R dt'\, e^{ist'} g(t') \omega^{it'} x\xi_0 | \int_\R dt\, e^{ist} g(t) \omega^{it}x\xi_0)\\ 
&= \sum_j (\int_\R dt'\, e^{ist'} g(t') \omega^{it'} x\xi_0 |\delta_j)
(\delta_j| \int_\R dt\, e^{ist} g(t) \omega^{it}x\xi_0)\\  
&= \sum_j \int_\R dt\, e^{ist} (F_j^**F_j)(t) = \sum_j | \widehat{F_j}(s)|^2,  
\end{align*} 
where $\{ \delta_j\}$ is an orthonormal system in $\sH$ supporting 
vectors $\{ \omega^{it}x\xi_0\}_{t \in \R}$ and 
$F_j(t) = g(t) (\delta_j|\omega^{it} x \xi_0)$ together with their Fourier transforms 
$\widehat{F_j}(s) = \int_\R e^{ist} F_j(t)\, dt$ belong to $L^2(\R)$. 

The Plancherel formula is then applied to each $F_j$ to get 
\begin{align*} 
(2\pi)^2 \tau(hy) 
&= \int_{-\infty}^\infty \sum_j |\widehat{F_j}(s)|^2\, ds 
= \sum_j \int_{-\infty}^\infty |\widehat{F_j}(s)|^2\, ds  
= 2\pi \sum_j \int_\R |F_j(t)|^2\, dt\\ 
&= 2\pi \int_\R \sum_j |F_j(t)|^2\, dt 
= \frac{2\pi}{\mu} (x\xi_0|x\xi_0) = \frac{2\pi}{\mu} \tau(ex^*x). 
\end{align*}

Similarly and more easily, the side identity follows from 
\begin{align*} 
2\pi \int_\R (\xi| \theta_s(y)\xi) 
&= \int_\R ds \int_\R dt\, \frac{1}{\mu + it} (x\xi|\theta_s(\omega^{it})x\xi)\\ 
&= \int_\R ds \int_\R dt\, \frac{e^{-ist}}{\mu + it} (x\xi|\omega^{it}x\xi) 
= \frac{2\pi}{\mu} (\xi|x^*x\xi)
\end{align*}
for each $\xi \in L^2(N)$. 

\begin{Theorem}[Haagerup correspondence]
There is a linear isomorphism between $M_*$ and 
the linear space of $\tau$-measurable operators $h$ on $L^2(N)$ satisfying $\theta_s(h) = e^{-s}h$ and 
so that $\varphi \in M_*^+$ corresponds to 
the analytic generator $h_\varphi$ of the one-parameter group $\{ \varphi^{it}\}$ 
of partial isometries in $N$. 

Moreover the correspondence preserves $N$*-bimodule structures as well as positivity. 
\end{Theorem}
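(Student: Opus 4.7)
The plan is to organize the proof into three stages, since most of the work on the positive cone has already been carried out in the preceding analysis.

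\textbf{Stage 1 (positive bijection).} For $\varphi \in M_*^+$, the analytic generator $h_\varphi$ of the one-parameter group $\{\varphi^{it}\}$ of partial isometries in $N$ is positive, is $\tau$-measurable by Haagerup's trace formula applied to $(1\vee\varphi)^{-\mu}$ ($\mu > 0$), and satisfies $\theta_s(h_\varphi) = e^{-s}h_\varphi$ since $\theta_s(\varphi^{it}) = e^{-ist}\varphi^{it}$. The inverse is the construction just performed: given a positive $\tau$-measurable $h$ with the stated relative invariance, the KMS computation produces $\varphi_h(x) := 2\pi\tau(x[1\vee h]) \in M_*^+$ with $h = h_{\varphi_h}$, and the two maps are mutually inverse by uniqueness of the analytic generator.

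\textbf{Stage 2 (additivity and linear extension).} For $\varphi, \psi \in M_*^+$, put $\omega = \varphi + \psi$ so that $\varphi, \psi \leq \omega$. The Connes--Sakai Radon--Nikodym machinery yields positive elements $a, b$ in the centralizer of $\omega$ with $a + b = [\omega]$, $\varphi^{it} = a\omega^{it}$ and $\psi^{it} = b\omega^{it}$ in $M(it)$. Analytic continuation within the Hilbert-algebra picture promotes these to the $\tau$-measurable operator identities $h_\varphi = ah_\omega$ and $h_\psi = bh_\omega$, whence $h_\varphi + h_\psi = (a+b)h_\omega = h_{\varphi+\psi}$. Extension to all of $M_*$ is via polar decomposition $\varphi = u|\varphi|$ with $u \in M$ a partial isometry and $|\varphi| \in M_*^+$, by setting $h_\varphi := u h_{|\varphi|}$; linearity follows from additivity on the positive cone together with Jordan decomposition. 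Conversely, for $\tau$-measurable $h$ with $\theta_s(h) = e^{-s}h$, polar decomposition $h = u|h|$ inside the $*$-algebra of operators affiliated to $N$ forces $u \in N^\theta = M$ by Takesaki's theorem (just proved), so $|h| = h_{|\varphi|}$ for a unique $|\varphi| \in M_*^+$ and $\varphi := u|\varphi|$ gives the preimage.

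\textbf{Stage 3 (bimodule compatibility and positivity).} For $a, b \in M$, the bimodule structure of $M(i\R + 1)$ combined with analytic continuation via Lemma~\ref{ME} yields $h_{a\varphi b} = a h_\varphi b$ as $\tau$-measurable operators, and similarly $h_{\varphi^*} = h_\varphi^*$; positivity is built into Stage~1. The main obstacle is Stage~2: promoting the modular identity $\varphi^{it} = a\omega^{it}$ in $M(it)$ to the unbounded-operator identity $h_\varphi = ah_\omega$ between $\tau$-measurable operators affiliated to $N$ demands coordinated control of the spectral calculus of $h_\omega$, the centrality of $a$, and the $\tau$-measurability estimates from Haagerup's trace formula. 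This is precisely the point at which the modular-algebra/Hilbert-algebra formalism developed in earlier sections substitutes for the operator-valued-weight argument of the classical treatment.
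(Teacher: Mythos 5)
Your Stage 1 and the overall architecture (positive case first, then additivity, then linear and bimodule extension) match the paper. But Stage 2, which is the real content of the theorem, contains a genuine gap: the claimed Radon--Nikodym decomposition is false. For $\varphi,\psi\in M_*^+$ and $\omega=\varphi+\psi$ there are in general \emph{no} positive elements $a,b$ in the centralizer of $\omega$ with $a+b=[\omega]$ and $\varphi^{it}=a\,\omega^{it}$. Sakai's theorem gives only the quadratic form $\varphi=a\omega a$ with $0\le a\le 1$ and $a$ not in the centralizer, while Connes' theorem gives a cocycle $(D\varphi:D\omega)_t$ satisfying $u_{s+t}=u_s\sigma^\omega_s(u_t)$, which is of the form $a^{it}$ only when $\varphi$ commutes with $\omega$. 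Already in $M_2(\C)$ with two non-commuting density matrices $\rho_1,\rho_2$, the operator $\rho_1(\rho_1+\rho_2)^{-1}$ is neither positive nor commuting with $\rho_1+\rho_2$, so the identity $h_\varphi=a h_\omega$ with central(izer) $a$ fails, and your derivation of $h_\varphi+h_\psi=h_{\varphi+\psi}$ collapses. This additivity is exactly the hard point that historically required dual weights and operator-valued weights; it cannot be reduced to a commutative Radon--Nikodym argument.

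The paper's route is different and you should adopt it: Lemma~\ref{averaging} gives
\[
\tau\bigl( h_\varphi\, x^*(1\vee\omega)^{-\mu}x \bigr) \;=\; \frac{1}{2\pi\mu}\,\varphi(x^*x),
\]
whose left-hand side is manifestly additive in $h$. Granting that $h_\varphi+h_\psi$ (more precisely its closure) is again a positive $\tau$-measurable operator with $\theta_s(h)=e^{-s}h$, the already-established positive correspondence produces $\phi\in M_*^+$ with $h_\phi=h_\varphi+h_\psi$, and the displayed identity forces $\phi(x^*x)=\varphi(x^*x)+\psi(x^*x)$, i.e.\ $\phi=\varphi+\psi$. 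The same lemma, together with the trace property and polarization, yields $a h_\varphi b^*=h_{a\varphi b^*}$; your Stage 3 appeal to Lemma~\ref{ME} and ``analytic continuation'' is too vague to substitute for this. Your surjectivity argument via polar decomposition $h=u|h|$ with $u\in N^\theta=M$ is sound and in fact slightly more explicit than what the paper writes down, so that part can stand.
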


\begin{proof}
The correspondence is already established for positive parts and Lemma~\ref{averaging} is used to 
get the additivity by 
\[ 
\frac{1}{2\pi \mu} \phi(x^*x)  = 
\langle (h_\varphi + h_\psi) x^*(1\vee \omega)^{-\mu} x \rangle 
= \frac{1}{2\pi \mu} (\varphi(x^*x) +\psi(x^*x)).  
\] 
Here $\varphi, \psi \in M_*^+$ and $\phi \in M_*^+$ is specified by $h_\phi = h_\varphi + h_\psi$. 

Once the semilinearity is obtained, the other part is almost automatic. 
The linear extension is well-defined by 
$h_\varphi = h_{\varphi_1} - h_{\varphi_2} + ih_{\varphi_3} - i h_{\varphi_4}$ 
for $\varphi = \varphi_1 - \varphi_2 + i\varphi_3 - i \varphi_4 \in M_*$ with $\varphi_j \in M_*^+$. 
The identity $ah_\varphi a^* = h_{a\varphi a^*}$ for $a \in M$ follows again from Lemma~\ref{averaging} as 
\[
\tau(aha^* x^*(1\vee \omega)^{-\mu}x) = \tau(ha^* x^*(1\vee \omega)^{-\mu}xa)
= \frac{2\pi}{\mu} \varphi(a^*x^*xa) = \frac{2\pi}{\mu} (a\varphi a^*)(x^*x) 
\]
and then $ah_\varphi b^* = h_{a\varphi b^*}$ by polarization. 
\end{proof}

\end{document}